\numberwithin{equation}{section}
\newtheorem{thm}{Theorem}[section]
\newtheorem{prop}[thm]{Proposition}
\newtheorem{lemma}[thm]{Lemma}
\newtheorem{conj}[thm]{Conjecture}
\newtheorem{rk}[thm]{Remark}
\newtheorem{defn}[thm]{Definition}
\newcommand{\BA}{{\mathbb {A}}}
\newcommand{\BC}{{\mathbb {C}}}
\newcommand{\BN}{{\mathbb {N}}}
\newcommand{\BR}{{\mathbb {R}}}
\newcommand{\CS}{{\mathcal {S}}}
\newcommand{\RG}{{\mathrm {G}}}
\newcommand{\RL}{{\mathrm {L}}}
\newcommand{\RU}{{\mathrm {U}}}
\renewcommand{\Re}{{\mathrm {Re}}}
\newcommand{\fg}{\mathfrak{g}}
\newcommand{\ft}{\mathfrak{t}}
\newcommand{\GL}{{\mathrm{GL}}}
\newcommand{\SL}{{\mathrm{SL}}}
\newcommand{\SU}{{\mathrm{SU}}}
\newcommand{\SO}{{\mathrm{SO}}}
\newcommand{\Mp}{{\mathrm{Mp}}}
\newcommand{\quo}{\backslash}
\newcommand{\Bil}{{\mathrm{Bil}}}
\newcommand{\Ind}{{\mathrm{Ind}}}
\newcommand{\Ad}{{\mathrm{Ad}}}
\newcommand{\Lie}{{\mathrm{Lie}}}
\newcommand{\eps}{\epsilon}
\newcommand{\sbst}{\subseteq}
\newcommand{\norm}[1]{\lVert#1\rVert}
\newcommand{\abs}[1]{\lvert#1\rvert}
\newcommand{\set}[2]{\{#1\,|\,#2\}}
\newcommand{\bigset}[2]{\Biggl\{#1\,\bigg\lvert\,#2\Biggr\}}
\newcommand{\mtrtwo}[4]{\begin{pmatrix} #1 &#2 \\#3 &#4 \end{pmatrix}}
\newcommand{\mtrthr}[9]{\begin{pmatrix} #1 &#2 &#3 \\#4 &#5 &#6\\ #7 &#8 &#9 \end{pmatrix}}
\renewcommand{\bar}{\overline}
\renewcommand{\tilde}{\widetilde}
\begin{document}

\title[Archimedean Local Gamma Factors for $\Ad(\GL_3)$, Part I]{On the Archimedean Local Gamma Factors for Adjoint Representation of $\GL_3$, Part I}

\author{Fangyang Tian}
\address{Department of Mathematics\\
National University of Singapore, Singapore}
\email{mattf@nus.edu.sg}

\subjclass[2010]{Primary 22E45, 22E50; Secondary 11F70}

\date{\today}

\keywords{Archimedean Local Integral, Rankin-Selberg Integral, Adjoint $L$-function for General Linear Group}

\begin{abstract}
  Studying the analytic properties of the partial Langlands $L$-function via Rankin-Selberg method has been proved to be successful in various cases. Yet in few cases is the local theory studied at the archimedean places, which causes a tremendous gap to complete the analytic theory of the complete $L$-function. In this paper, we will establish the meromorphic continuation and the functional equation of the archimedean local integrals associated with D. Ginzburg's global integral (\cite{Gin}) for the adjoint representation of $\GL_3$. Via the local functional equation, the local gamma factor $\Gamma(s,\pi,\Ad,\psi)$ can be defined. In a forthcoming paper, we will compute the local gamma factor $\Gamma(s,\pi,\Ad,\psi)$ explicitly, which fill in some blanks in the archimedean local theory of Ginzburg's global integral.
\end{abstract}

\maketitle
\tableofcontents


\section{Introduction and Notations}\label{Section: Intro}


   \subsection{The Adjoint $L$-function for $\GL_n$}\label{subsection: Adjoint L function}

   The study of analytic properties of $L$-functions can be dated back to B. Riemann's time, when the meromorphic continuation and functional equation of Riemann Zeta function $\zeta(s)$ were established via integral representations of Jacobi-Theta series. The analytic theory of Riemann Zeta function $\zeta(s)$, or in general the Dirichlet $L$-function $L(s,\chi)$, can now be interpreted as theory of automorphic forms of the group $\GL_1$ (see \cite{TateThesis}). When we move to groups of higher rank, we consider a connected linear algebraic group $\RG$ defined over a number field $F$. We write $\mathbb{A}$ for the ring of adeles of $F$. An automorphic representation $\pi$ of $\RG(\BA)$ is a natural non-abelian generalization of a Hecke character of $\GL_1$. Correspondingly, we will study the Langlands $L$-function $L(s,\pi,r)$, a natural generalization of the Dirichlet $L$-function, where $r$ is a finite dimensional representation of the $L$-group ${}^\RL\RG$. It is a theorem of Langlands that $L(s,\pi,r)$ converges on some right half plane. Langlands also conjectured that
   \begin{conj}
      The L-function $L(s,\pi,r)$ has a meromorphic continuation to the whole complex plane which has only finitely many poles. It also satisfies a functional equation relating $s$ to $1-s$.
   \end{conj}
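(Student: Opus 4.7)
The plan is to attack the conjecture via the Rankin-Selberg method, which has proved successful in many cases and which in the specific setting of this paper uses Ginzburg's global integral from \cite{Gin}. The first step is to produce a global integral $I(s,\varphi,\ldots)$ built from cusp forms in $\pi$ and an Eisenstein series $E(g,s)$ on an auxiliary group, and to show by unfolding against a suitable Whittaker or Fourier coefficient that $I(s,\varphi,\ldots)$ factors as an Euler product whose unramified local factors are the desired local $L$-factors $L(s,\pi_v,r)$. In the adjoint case for $\GL_3$, this unfolding identifies $I(s,\varphi,\ldots)$ with the partial $L$-function $L^S(s,\pi,\Ad)$ up to elementary factors, so analytic properties transfer from the former to the latter.

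Second, one invokes Langlands' general theory of Eisenstein series to transfer the meromorphic continuation and functional equation of $E(g,s)$ to the global integral $I(s,\varphi,\ldots)$. Combined with the unfolding, this immediately yields meromorphic continuation of the partial $L$-function $L^S(s,\pi,\Ad)$, and a ``functional equation up to local factors at $S$.'' Third, to promote the statement from the partial $L$-function to the complete $L$-function, and to obtain the honest functional equation relating $s$ to $1-s$, one has to analyze each local zeta integral $I_v(s,\varphi_v,\ldots)$: one must prove it admits meromorphic continuation in $s$, satisfies a local functional equation, and the proportionality constant thus produced defines the local gamma factor $\Gamma(s,\pi_v,\Ad,\psi_v)$. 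At almost every place $v$ this gamma factor must then be shown to agree with the ratio coming from the conjectural local $L$- and $\varepsilon$-factors.

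The main obstacle — and the problem taken up in the present paper — is the archimedean local analysis. The non-archimedean case is largely combinatorial and reduces to manipulations with Iwahori-fixed vectors and Schwartz functions on $p$-adic groups, so by contrast the archimedean theory instead demands estimates on moderate-growth matrix coefficients, holomorphic continuation of families of smooth sections of induced representations, and a careful use of finite-dimensionality of certain $\Hom$-spaces in the Casselman--Wallach category in order to extract a scalar local functional equation. Establishing the meromorphic continuation and the local functional equation of the archimedean integral is thus the missing analytic step needed to close the loop in Ginzburg's Rankin--Selberg construction, and only once it is in hand can the program above deliver the full functional equation and meromorphic continuation predicted by the conjecture.
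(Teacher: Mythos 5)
This statement is Langlands' conjecture, and the paper does not prove it --- it is explicitly labeled a \emph{conjecture}, attributed to Langlands, and stated to motivate the work that follows. There is no proof in the paper for you to be compared against, and indeed the conjecture remains open in general. Your ``proof proposal'' is therefore not a proof at all: it is an accurate summary of the Rankin--Selberg research program as applied to the single case $r = \Ad$, $\RG = \GL_3$, and it explicitly flags the archimedean local theory as the ``missing analytic step'' --- i.e., it acknowledges the argument is incomplete. Presenting a research plan with an admitted missing step as a proof of a statement is a genuine gap, not a stylistic quibble.

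Beyond the scope issue (the conjecture is for arbitrary $r$ and arbitrary $\RG$, while the proposal concerns one representation of one group), even in the $\Ad(\GL_3)$ case the chain you describe would not, upon completion of the archimedean step, deliver the conjecture. First, the unfolded global integral identifies $Z(\phi,f_s)$ with a \emph{product of local integrals}, not directly with $L^S(s,\pi,\Ad)$; only at unramified places does Ginzburg's computation \eqref{Eq: local unramified computation 1} equate the local integral with $L(3s-1,\pi_\nu,\Ad)$ divided by explicit zeta factors. Relating the local integral to the honest local $L$-factor at ramified and archimedean places (the ``g.c.d.\ problem'' the paper itself defers to future work) is a separate and substantial step. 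Second, finiteness of poles of the complete $L$-function does not follow formally from meromorphic continuation plus a functional equation; one must control the possible poles of the local integrals (and of the Eisenstein series) against those of the intertwining operator $M(w)$. Third, the local gamma factors produced by \eqref{Eq: 184} must be matched with the arithmetically defined gamma factors to obtain a functional equation of the conjectured shape $s \leftrightarrow 1-s$ with the correct conductor and root number, and that comparison is nontrivial. In short, you have correctly described the strategy and the role of this paper within it, but what you have written is a program outline, not a proof, and the target statement is a conjecture that neither the paper nor your proposal establishes.
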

   We refer to \cite{BorelAutomorphicLFunction} for more detailed discussion on Langlands $L$-functions. Among all the Langlands $L$-functions, the adjoint $L$-function is of great interest. The adjoint $L$-function, in particular its special value at 1, appears in computations of automorphic period, in the local theory related to Plancherel measure, etc. (see \cite{HarrisAdjointMotive},\cite{H-I-I FormalDegreeAdjointGamma}, \cite{VenkateshTakagiLecture}). We also refer to \cite{JiangRallisFourierCoeffiG2} for an interesting and deep relation between the holomorphy of the adjoint $L$-function and a classical algebraic number theoretic conjecture on holomorphy of the quotient of two Dedekind Zeta functions.

   The Rankin-Selberg method, which was originally developed to study the standard $L$-function for $\GL_2\times \GL_2$, has been proved to be successful in providing fruitful results on the analytic properties of Langlands $L$-function. We refer to D. Bump's paper \cite{BumpRankinIntegralSurvey} for an excellent survey on this topic. The study of adjoint $L$-function via Rankin-Selberg method started from S. Gelbart and H. Jacquet's work on $\GL_2$ (see \cite{GelbartJacquetGL2Adjoint}), where they studied the global and local theory of the global integral (due to G. Shimura) constructed by integrating a cusp form on $\GL_2$ against an Eisenstein series on the Metaplectic group $\Mp_2$. As for the adjoint $L$-function for $\GL_3$, D. Ginzburg constructed in his paper \cite{Gin} a global integral by integrating a cusp form on $\GL_3$ against an Eisenstein series on $\RG_2$. He only proved that the global integral admits an Euler product and at the local unramified non-archimedean places, the local integral is the adjoint $L$-function $L(3s-1,\pi,\Ad)$ (modulo some normalizing factor). His construction is quite remarkable and subtle. We will review Ginzburg's work with more details in the next two Subsections. To complete the analytic theory of the adjoint $L$-function of $\GL_3$, we also have to take care of the local archimedean places as well as the non-archimedean places where local representation is ramified. The goal of this paper is to explore some basic archimedean local theory and establish a foundation for attacking the local functional equation at archimedean places in a forthcoming paper. The construction of the global integrals for the adjoint $L$-function for $\GL_4$ and $\GL_5$ uses similar ideas, i.e. integrating a cusp form on $\GL_4$ ($\GL_5$ resp.) against an Eisenstein series on the exceptional group $F_4$ ($E_8$ resp.). We refer to \cite{BumpGinzburgAdjointGL4} and \cite{GinzburgHundleyAdjointGL5} for detailed construction in these two cases. Unfortunately, less is known on the local theory in the case of $\GL_5$. For example, even the unramified computation in \cite{GinzburgHundleyAdjointGL5} is not complete.


   \subsection{Some Basic Structures of $\RG_2$}\label{section: structure G2}

    Let us first review some basic structures of $\RG_2$. Our main reference is \cite{SGA3}. Let $\RG_2$ be the connected, simply connected group with Lie algebra $\fg_2$, and $T$ be the torus of $\RG_2$ with Lie algebra $\ft$. The root system of $\Phi(\fg_2,\ft)$ can be described as
    \begin{equation*}
       \Phi(\fg_2, \ft) = \set{\pm\eps_i, \eps_i-\eps_j}{i,j=1,2,3\quad \text{and}\quad i\ne j},
    \end{equation*}
    where $\eps_1+\eps_2+\eps_3 = 0$. We choose a positivity on the root system $\Phi(\fg_2,\ft)$ such that the set of simple roots is given by
    \begin{equation*}
       \{\alpha = \eps_2, \beta = \eps_1-\eps_2\},
    \end{equation*}
    i.e. $\alpha$ is the short simple root while $\beta$ is the long simple root. Hence all positive roots are $\alpha,\beta,\alpha+\beta,2\alpha+\beta,3\alpha+\beta,3\alpha+2\beta$.  All the six long roots $\pm\beta, \pm(3\alpha+\beta), \pm(3\alpha+2\beta)$ generate a subgroup of $\RG_2$ isomorphic to $\SL_3$. We emphasize that this subgroup is not a Levi subgroup of any parabolic subgroup of $\RG_2$. This subtle embedding will cause some major computational difficulties in the theory of archimedean local integrals.

    For any root $\gamma$, let $\fg_{2,\gamma}$ be the root space of $\fg_2$ corresponding to $\gamma$. Denote by $x_\gamma(t)$ the one parameter unipotent subgroup of $\RG_2$ with Lie algebra $\fg_{2,\gamma}$. For each simple root $\gamma\in\{\alpha,\beta\}$, there is a homomorphism $\phi_\gamma: \text{SL}_2\rightarrow \RG_2$ such that
    \begin{equation*}
       \phi_\gamma(\mtrtwo{1}{0}{t}{1}) = x_{-\gamma}(t), \quad \phi_\gamma(\mtrtwo{1}{t}{0}{1}) = x_\gamma(t).
    \end{equation*}
    Set
    \begin{equation*}
       w_\gamma = \phi_\gamma(\mtrtwo{0}{1}{-1}{0}).
    \end{equation*}
    Then it is easy to check that
    \begin{equation*}
       w_\gamma = x_\gamma(1)x_{-\gamma}(-1)x_\gamma(1),
    \end{equation*}
    and $w_\gamma$ is a representative for the simple reflection relative to simple root $\gamma$. Throughout this paper, when no confusion arises, we will not distinguish an element in the Weyl group from its representative. Following \cite[Expos\'{e} XXIII, Section 3.4]{SGA3}, for any root $\gamma$, since dim$\fg_{2,\gamma}=1$, we can choose generators $X_{\gamma}\in \fg_{2,\gamma}$ in the following way:
    \begin{equation}\label{eq: Root vector g2}
       \begin{aligned}
          &X_\alpha = dx_{\alpha}(1),\qquad  X_{-\alpha} = dx_{-\alpha}(1),\\
          &X_\beta = dx_{\beta}(1),\qquad  X_{-\beta} = dx_{-\beta}(1),\\
          &X_{\alpha+\beta} = \Ad(w_\beta)(X_\alpha),\quad X_{2\alpha+\beta} = \Ad(w_\alpha)(X_{\alpha+\beta}), \\
          &X_{3\alpha+\beta} = -\Ad(w_\alpha)(X_\beta),\quad X_{3\alpha+2\beta} = \Ad(w_\beta)(X_{3\alpha+\beta}).\\
       \end{aligned}
    \end{equation}
    For any negative root $\gamma$, we can construct generators $X_{\gamma}\in \fg_{2,\gamma}$ from $X_{-\alpha}, X_{-\beta}$ and simple reflections using the same recipe. Finally, for any root $\gamma$,  the one dimensional subgroup $x_\gamma(t)$ is parameterized such that $X_\gamma = dx_\gamma(1)$. Then the following commutator relations hold:
    \begin{lemma}\cite[Expos\'{e} XXIII, Section 3.4]{SGA3}\label{Lemma£ºCommutator Relations}
       We adopt the following convention of commutator $(x,y)=x^{-1}y^{-1}xy$, and have
       \begin{equation}
       \begin{aligned}
          (x_{\beta}(s),x_{\alpha}(t)) &= x_{\alpha+\beta}(st)x_{2\alpha+\beta}(st^2)x_{3\alpha+\beta}(st^3)x_{3\alpha+2\beta}(s^2t^3),\\
          (x_{\alpha+\beta}(s),x_{\alpha}(t)) &= x_{2\alpha+\beta}(2st)x_{3\alpha+\beta}(3st^2)x_{3\alpha+2\beta}(3s^2t),\\
          (x_{2\alpha+\beta}(s),x_{\alpha}(t)) &= x_{3\alpha+\beta}(3st),\\
          (x_{3\alpha+\beta}(s),x_{\beta}(t)) &= x_{3\alpha+2\beta}(-st),\\
          (x_{2\alpha+\beta}(s),x_{\alpha+\beta}(t)) &= x_{3\alpha+2\beta}(3st).\\
       \end{aligned}
       \end{equation}
       For any other pair $(\gamma,\delta)$ of positive roots, $(x_\gamma(s),x_\delta(t))=1$.
    \end{lemma}
    Also, we have
    \begin{lemma}\cite[Expos\'{e} XXIII, Section 3.4]{SGA3}\label{Lemma: Adjoint Action of Lie Algebra}
       \begin{equation}
          \begin{aligned}
             &\Ad(w_\alpha)(X_{2\alpha+\beta}) = -X_{\alpha+\beta}, \qquad \Ad(w_\alpha)(X_{3\alpha+\beta})=X_{\beta},\\ &\Ad(w_\alpha)(X_{3\alpha+2\beta})=X_{3\alpha+2\beta},\qquad\Ad(w_\beta)(X_{\alpha+\beta}) = -X_{\alpha},\\ &\Ad(w_\beta)(X_{2\alpha+\beta})=X_{2\alpha+\beta},\qquad \Ad(w_\beta)(X_{3\alpha+2\beta})=-X_{3\alpha+\beta}.\\
          \end{aligned}
       \end{equation}
    \end{lemma}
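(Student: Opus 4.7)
The plan is to divide the six identities into two groups according to how the simple reflection $s_\gamma$ acts on the target root, and to exploit the fact that $w_\gamma^2 = \phi_\gamma(-I) = \gamma^\vee(-1)$ sits in the maximal torus $T$, hence acts on any root space $\fg_{2,\delta}$ by the scalar $(-1)^{\langle \delta,\gamma^\vee\rangle}$. The Cartan integers I will need are $\langle \beta,\alpha^\vee\rangle = -3$ and $\langle \alpha,\beta^\vee\rangle = -1$, reflecting the fact that $\alpha$ is short and $\beta$ is long.

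For the four identities in which the target root is actually moved by the simple reflection (i.e.\ $\langle \delta, \gamma^\vee\rangle \ne 0$), each one follows immediately by applying $\Ad(w_\gamma)$ to the appropriate defining relation of (1.2.1). For instance, starting from $X_{2\alpha+\beta} = \Ad(w_\alpha)(X_{\alpha+\beta})$, I get $\Ad(w_\alpha)(X_{2\alpha+\beta}) = \Ad(w_\alpha^2)(X_{\alpha+\beta}) = (-1)^{\langle \alpha+\beta,\alpha^\vee\rangle} X_{\alpha+\beta} = -X_{\alpha+\beta}$, since $\langle \alpha+\beta,\alpha^\vee\rangle = 2 - 3 = -1$. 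The identities for $\Ad(w_\alpha)(X_{3\alpha+\beta})$, $\Ad(w_\beta)(X_{\alpha+\beta})$, and $\Ad(w_\beta)(X_{3\alpha+2\beta})$ come out in exactly the same way; in each case the relevant Cartan integer is odd.

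The two remaining identities $\Ad(w_\alpha)(X_{3\alpha+2\beta}) = X_{3\alpha+2\beta}$ and $\Ad(w_\beta)(X_{2\alpha+\beta}) = X_{2\alpha+\beta}$ are the main obstacle. Here $\langle 3\alpha+2\beta, \alpha^\vee\rangle = 6 - 6 = 0$ and $\langle 2\alpha+\beta, \beta^\vee\rangle = -2 + 2 = 0$, so $\Ad(w_\gamma)$ preserves the relevant one-dimensional root space and acts on it by a scalar $c$ satisfying $c^2 = 1$; the torus computation alone cannot distinguish $c = +1$ from $c = -1$. To pin down the sign I would invoke the commutator formulas of the previous lemma. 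Expanding $(x_{2\alpha+\beta}(s), x_{\alpha+\beta}(t)) = x_{3\alpha+2\beta}(3st)$ via Baker--Campbell--Hausdorff yields the bracket identity $[X_{2\alpha+\beta}, X_{\alpha+\beta}] = 3 X_{3\alpha+2\beta}$. Applying $\Ad(w_\alpha)$ to both sides and substituting $\Ad(w_\alpha)(X_{2\alpha+\beta}) = -X_{\alpha+\beta}$ (already proved) together with the defining relation $\Ad(w_\alpha)(X_{\alpha+\beta}) = X_{2\alpha+\beta}$ gives $[-X_{\alpha+\beta}, X_{2\alpha+\beta}] = 3 X_{3\alpha+2\beta}$ on the left, matched against $3c X_{3\alpha+2\beta}$ on the right, forcing $c = 1$. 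The final identity is handled identically using $(x_{\alpha+\beta}(s), x_\alpha(t))$, whose bilinear leading term $x_{2\alpha+\beta}(2st)$ yields $[X_{\alpha+\beta}, X_\alpha] = 2 X_{2\alpha+\beta}$. A point requiring mild care is that the higher-order correction factors such as $x_{3\alpha+\beta}(3st^2)$ contribute only at order $st^2$ or higher and therefore do not affect the bilinear bracket coefficient extracted from BCH.
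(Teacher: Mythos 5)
The paper proves nothing here: Lemma~\ref{Lemma: Adjoint Action of Lie Algebra} is simply cited from \cite{SGA3}, so there is no ``paper's proof'' to compare against. Your argument supplies the missing verification, and it is correct. The key points all check out: $w_\gamma^2=\phi_\gamma(-I)=\gamma^\vee(-1)$, which scales $\fg_{2,\delta}$ by $(-1)^{\langle\delta,\gamma^\vee\rangle}$; the Cartan integers $\langle\beta,\alpha^\vee\rangle=-3$, $\langle\alpha,\beta^\vee\rangle=-1$ give $\langle\alpha+\beta,\alpha^\vee\rangle=-1$, $\langle\beta,\alpha^\vee\rangle=-3$, $\langle\alpha,\beta^\vee\rangle=-1$, $\langle 3\alpha+\beta,\beta^\vee\rangle=-1$, all odd, which settles the first four identities directly from the defining relations \eqref{eq: Root vector g2}; and the two fixed-root cases are correctly resolved by applying $\Ad(w_\gamma)$ to the bilinear $st$-coefficient of the relevant commutator relation in Lemma~\ref{Lemma��Commutator Relations} (your BCH remark that the $st^2$, $s^2t$ correction terms do not contaminate the $st$-coefficient is right, since the cross terms in the Hausdorff series cancel between the inverse and direct factors). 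One small stylistic observation: for $\Ad(w_\alpha)(X_{3\alpha+2\beta})$ one could avoid the commutator argument entirely by writing $X_{3\alpha+2\beta}=-\Ad(w_\beta w_\alpha)(X_\beta)$ from \eqref{eq: Root vector g2} and exploiting $\Ad(w_\alpha w_\beta w_\alpha)=\Ad(w_\beta w_\alpha w_\beta)$ (the braid relation does not hold for $G_2$, so this shortcut actually fails), so your use of the commutator relations is in fact the natural route for the two sign-ambiguous cases, not merely a fallback.
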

    \begin{rk}
        The choices of root vectors in this paper are slightly different from those of \cite{Gin}. For example, \cite{Gin} 1.1 implies that $\Ad(w_\beta)(X_{\alpha+\beta}) = X_{\alpha}$ under Ginzburg's choice of $X_{\alpha+\beta}$, but this contradicts to Lemma \ref{Lemma: Adjoint Action of Lie Algebra}. We note that our choice and Ginzburg's choice only differ by a sign.
    \end{rk}


   \subsection{Ginzburg's Work on $L(s,\pi,\Ad)$ and Related Work of Other People}

   In this Subsection, we continue to assume that $F$ is a number field and retain all notations in Subsection \ref{section: structure G2}. Let $P$ be the standard maximal parabolic subgroup of $\RG_2$ with Levi decomposition is $P=MU$, where $U$ is generated by the following one parameter subgroups $$x_\beta(t), x_{\alpha+\beta}(t), x_{2\alpha+\beta}(t), x_{3\alpha+\beta}(t), x_{3\alpha+2\beta}(t).$$ In \cite{Gin}, Ginzburg considered  the normalized parabolically induced representation
   \begin{equation}\label{eq: global induced repn G2}
       \rho_s := \Ind_{P(\mathbb{A})}^{\RG_2(\mathbb{A})}\delta_P^{s-\frac{1}{2}}
   \end{equation}
   on the space $V_{\rho_s}$. From Langlands' theory of Eisenstein series (see \cite{Sha}), if we take a function $f_s\in V_{\rho_s}$, then the Eisenstein series
   \begin{equation}\label{eq: eisenstein series}
       E(g,f_s) := \sum_{\gamma\in P(F)\quo \RG_2(F)} f_s(\gamma g)
   \end{equation}
   can be extended to a meromorphic function of $s\in \BC$. It satisfies a functional equation
        \begin{equation*}
           E(g, f_s) = E(g, M(w)f_s),
        \end{equation*}
   where $M(w)$ is the intertwining operator relative to the Weyl element $w=w_{3\alpha+2\beta}$.

   Let $\pi$ be a cuspidal representation of $\GL_3(\BA)$ with a trivial central character. Then $\pi$ is generic and has a global Whittaker model $\mathcal{W}(\pi,\psi)$ for some non-trivial character $\psi$ of $F\quo \BA$. Here the generic character for the standard maximal unipotent subgroup $N$ of $\GL_3$ is given by
   \begin{equation}\label{eq: global generic char}\psi_N(\mtrthr{1}{x}{z}{}{1}{y}{}{}{1}) = \psi(x+y).\end{equation}
   The global integral which Ginzburg constructed is obtained by integrating a cusp form $\phi \in V_\pi$ against the Eisenstein series $E(g, f_s)$ (in \eqref{eq: eisenstein series}):
   \begin{equation}\label{eq: global integral}
       Z(\phi, f_s) = \int_{\SL_3(F)\quo \SL_3(\mathbb{A})} \phi(g)E(g, f_s) dg,
   \end{equation}
   Ginzburg proved in \cite[Theorem 1]{Gin} that the above global integral $Z(\phi, f_s)$ admits an Euler product
   \begin{equation}\label{eq: Euler product}
       Z(\phi, f_s) = \prod_\nu Z_\nu(W_{\phi_\nu}, f_{s,\nu}).
   \end{equation}
   Here the local integrals $Z_\nu(W_{\phi_\nu}, f_{s,\nu})$ are defined by
   \begin{equation}\label{Def: local integral}
           Z_\nu(W_{\phi_\nu}, f_{s,\nu}) = \int_{N_2(F_\nu)\quo \SL_3(F_\nu)}  W_{\phi_\nu}(g)f_{s,\nu}(\gamma g)dg,
   \end{equation}
   where $W_{\phi_\nu}$ lives in the local Whittaker model $\mathcal{W}(\pi_\nu,\psi_\nu)$, $\gamma = x_{-(\alpha+\beta)}(-1)w_\beta$ is the representative of the open orbit in $P\quo \RG_2/\SL_3$, and the group $N_2$ consists of all matrices of the form
   \begin{equation}\label{eq: group N2}
                       \mtrthr{1}{x}{z}{}{1}{-x}{}{}{1}.
   \end{equation}
   Ginzburg also proved in \cite[Theorem 2]{Gin} that when $\nu$ is a non-archimedean local place where $\pi_\nu$ and $\rho_{s,\nu}$ are unramified, if we take $W_{\phi_\nu}$ to be the spherical Whittaker function and $f_{s,\nu}\in V_{\rho_{s,\nu}}$ to be the spherical function such that
   \begin{equation*}
           W_{\phi_v}(I) = f_{s,\nu}(I) = 1,
   \end{equation*}
   the local integral can be explicitly computed in terms of the local unramified adjoint L-function of $\pi_\nu$, with some normalizing factors:
   \begin{equation}\label{Eq: local unramified computation 1}
          Z_\nu(W_{\phi_\nu}, f_{s,\nu}) = \frac{L(3s-1,\pi_\nu,\text{Ad})}{\zeta_\nu(3s)\zeta_\nu(6s-2)\zeta_\nu(9s-3)}.
   \end{equation}

   A modification of Ginzburg's construction for adjoint $L$-function of unitary groups was first found by J. Hundley (see \cite{HundleyAdjointSU21}). By studying the local and global theory of these integral representations (for $\GL_3$ and unitary groups), Hundley and Q. Zhang (see \cite{HundleyZhangAdjointGL3}) were able to extend the result of holomorphy of partial $L$-function $L^S(s,\pi,\Ad)$ obtained in \cite{HundleyAdjointA2} to the finite part of the complete $L$-function, denoted by $L_f(s,\pi,\Ad)$. Thus, one should be able to complete the analytic theory of complete $L$-function $L(s,\pi,\Ad)$ for $\GL_3$ and $\RU(2,1)$ via integral representations once the corresponding archimedean local theory is complete. This is one of the main intuitions of this paper.


   \subsection{Archimedean Local Setup and Organization of this Paper}\label{subsection: organization }

   The goal of this paper is to study the archimedean local theory for the integrals \eqref{Def: local integral}. Let us clarify the setup in the archimedean local theory first.

   For the rest of the paper, unless we state that $F$ is non-archimedean, we will always assume that $F$ is an archimedean local field, i.e. $F = \BR$ or $\BC$. Though the archimedean component of a cuspidal representation is in prior a $(\fg,K)$-module, for some technical reasons, it is more convenient to work with their Casselman-Wallach completions. Recall that a representation of a real reductive group is a Casselman-Wallach representation, if it is a smooth Fr\'{e}chet representation which satisfies the moderate growth condition and its Harish-Chandra module has finite length. For the constructions and basic properties of Casselman-Wallach representations, we refer to \cite[Chapter 11]{Wal2}. Thus, following Ginzburg's assumption, we assume that $\pi$ is an irreducible admissible generic Casselman-Wallach representation of $\GL_3(F)$ with a trivial central character. Then $\pi$ has a non-zero Whittaker model $\mathcal{W}(\pi,\psi)$ for some non-trivial additive character $\psi$ of $F$. The generic character $\psi_N$ associated with $\psi$ is defined exactly in the same way as in \eqref{eq: global generic char}. The local archimedean analogue of induced representation in \ref{eq: global induced repn G2} is
   \begin{equation}\label{eq: local induced repn G2}
       \rho_s := \Ind_{P(F)}^{\RG_2(F)}\delta_P^{s-\frac{1}{2}},
   \end{equation}
   which is assumed to be a Casselman-Wallach representation. The representation space of the above $\rho_s$ is still denoted by $V_{\rho_s}$. The modular character $\delta_P$ satisfies
   \begin{equation*}
       \delta_P(\mtrthr{t_1}{}{}{}{t_2}{}{}{}{t_3}) = \abs{t_1^3t_3^{-3}}_{F}.
    \end{equation*}
    The local integral which we consider in this paper is
   \begin{equation}\label{Def: local archi integral}
           Z(W_{v}, f_{s}) = \int_{N_2(F)\quo \SL_3(F)}  W_{v}(g)f_{s}(\gamma g)dg,
   \end{equation}
   where $\gamma = x_{-(\alpha+\beta)}(-1)w_\beta$.

   We first prove that
   \begin{thm}\label{Theorem: Absolute Convergence of Integral}
       When $F$ is archimedean, there is a sufficiently large $s_0$ such that when $\Re(s)> s_0$, the local integrals $Z(W_v,f_s)$ defined in (\ref{Def: local archi integral}) converge absolutely.
   \end{thm}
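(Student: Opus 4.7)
The plan is to reduce the absolute convergence of $Z(W_v,f_s)$ to that of an explicit iterated integral via the Iwasawa decomposition on $\SL_3(F)$, and then to control the integrand by the standard archimedean size estimates for Whittaker functions and induced sections.

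I parametrize the quotient $N_2(F)\quo\SL_3(F)$ as follows. Let $N_3$ be the upper-triangular maximal unipotent of $\SL_3$, $A_3$ its diagonal torus, and $K_3$ a maximal compact subgroup of $\SL_3(F)$, so that $\SL_3(F)=N_3(F)A_3(F)K_3$ is the Iwasawa decomposition. The subgroup $N_2$ has codimension one in $N_3$, and the one-parameter unipotent $N_0$ consisting of matrices with a single non-trivial entry in position $(2,3)$ furnishes a direct-product complement $N_3=N_2\cdot N_0$. Therefore every coset in $N_2(F)\quo\SL_3(F)$ has a measurable representative $g=n_0 ak$ with $(n_0,a,k)\in N_0\times A_3\times K_3$, and the invariant measure factors as a product of Haar measures on the three factors weighted by a power of the modular character of the $\SL_3$-Borel. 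Since $\psi_N$ is trivial on $N_2$, the integrand descends to this quotient, and the Whittaker transformation law $W_v(n_0ak)=\psi_N(n_0)W_v(ak)$ reduces the $W_v$-factor to a function of $(a,k)$ up to a unitary character in $n_0$.

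Next I bound $|f_s(\gamma n_0 ak)|$ and $|W_v(ak)|$ separately. For the first, one computes (or at least bounds) the Iwasawa decomposition of $\gamma n_0 ak$ in $\RG_2(F)=P(F)\cdot K_{\RG_2}$, writing $\gamma n_0 ak=p(n_0,a,k)\cdot\kappa(n_0,a,k)$ with $p\in P(F)$ and $\kappa\in K_{\RG_2}$. Since $f_s\in V_{\rho_s}$ satisfies $f_s(pg)=\delta_P(p)^s f_s(g)$ and is continuous on the compact group $K_{\RG_2}$, one obtains
\begin{equation*}
   |f_s(\gamma n_0 a k)|\le C_f\cdot \delta_P\bigl(p(n_0,a,k)\bigr)^{\Re(s)}.
\end{equation*}
For the second, I invoke the standard archimedean size estimates for Whittaker functions on Casselman--Wallach representations (cf.~Wallach, \emph{Real Reductive Groups II}, Chapter~15): for every $N>0$ there is a constant $C_N$ such that, uniformly in $k\in K_3$, $|W_v(ak)|\le C_N\prod_{\alpha\in\Delta_3}|\alpha(a)|_F^{-N}$ on the positive Weyl chamber of $A_3$, while $|W_v(ak)|$ is of moderate growth elsewhere; here $\Delta_3$ is the set of simple roots of $\SL_3$.

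The main technical obstacle is the explicit computation of the Iwasawa decomposition of $\gamma n_0 ak$ in $\RG_2$. The long-root copy of $\SL_3$ embeds in $\RG_2$ in a way that is \emph{not} a Levi of any parabolic, as emphasized in Subsection~\ref{section: structure G2}; consequently the $P\cdot K_{\RG_2}$ factorization does not reduce to a computation inside $\SL_3$, but requires a Bruhat-type analysis in $\RG_2$ using the commutator and adjoint identities recalled there. After conjugating $a$ and $n_0$ past $w_\beta$ and absorbing the resulting factor involving $x_{-(\alpha+\beta)}(-1)$ into $P$, one aims to express $\delta_P(p(n_0,a,k))$ as an explicit, polynomially tame function of the parameter $t\in F$ and of the torus eigenvalues of $a$. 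Substituting both bounds then reduces the convergence of $Z(W_v,f_s)$ to an elementary iterated integral on $F\times A_3\times K_3$; for $\Re(s)$ sufficiently large, the growth of $\delta_P(p)^{\Re(s)}$, weighted against $\delta_{B_3}^{-1}$, is absorbed into the rapid decay of $W_v$ in the torus directions and into integrable behaviour in $t$, thereby establishing the theorem.
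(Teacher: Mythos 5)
Your overall framework is the same as the paper's: Iwasawa decomposition on $\SL_3(F)$ to reduce $Z(W_v,f_s)$ to an iterated integral over $N_0\times A_3\times K_3$, an Iwasawa-type decomposition of $\gamma n_0 a$ inside $\RG_2(F)$ to extract the $\delta_P^s$-power from the section, and an asymptotic bound on $W_v$ along the torus (the paper uses the Jacquet--Shalika coarse asymptotic expansion, which is the precise form of the rapid-decay/moderate-growth dichotomy you invoke). However, there is a genuine gap: the step you yourself label ``the main technical obstacle''---computing the $P(F)\cdot K_{\RG_2}$ factorization of $\gamma n_0 a$ and expressing $\delta_P(p(n_0,a,k))$ explicitly---is never carried out, and it is exactly this computation (Lemma~\ref{lemma: archimedean lemma} in the paper, done via the explicit Iwasawa decompositions of $x_{-(\alpha+\beta)}(-t_2)$ and $x_{-\beta}(-z)$ in $\RG_2$) that produces the exponents $\abs{t_1^{3s}t_2^{9s}}_F$, $(1+\abs{t_2}^2)^{-9s/2}$, $(\abs{z}^2+1)^{-3s/2}$ on which the entire convergence analysis rests. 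Without these explicit exponents one cannot verify that the $z$-integral converges (this needs $\Re(s)>\tfrac13$ from the $(\abs{z}^2+1)^{-3s/2}$ factor, not merely ``integrable behaviour in $t$''), nor that the torus exponents work out.

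Moreover, the final sentence of your argument attributes the role of $\Re(s)$ to the wrong regime. You claim the growth of $\delta_P(p)^{\Re(s)}$ is ``absorbed into the rapid decay of $W_v$ in the torus directions.'' In fact, after the change of variables the section factor behaves like a positive power of $\abs{t_1t_2}_F$ (see \eqref{Eq:07} in the paper, bounded above by $\abs{t_1t_2}_F^{\frac{3}{2}\Re(s)-\frac{3}{2}}$); it therefore \emph{decays} as $a\to 0$ and \emph{grows} as $a\to\infty$. The Whittaker function does the opposite: it decays rapidly in the dominant chamber ($a\to\infty$), where no largeness of $\Re(s)$ is needed, and can grow polynomially as $a\to 0$, which is exactly where one needs $\Re(s)$ large so that the positive power of $\abs{t_1t_2}_F$ from the section dominates. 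So the absorption runs in the direction opposite to what you describe, and leaving this unexamined means the proposal does not actually establish convergence for large $\Re(s)$.
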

   The next step is to establish a meromorphic continuation of the local integrals $Z(W_v,f_s)$. To achieve this, we first need to establish an asymptotic expansion of the Whittaker function $W_v$  along the torus of $\GL_3(F)$. In this paper, there are two asymptotic expansions of this nature. The first one is a coarse asymptotic expansion due to H. Jacquet and J. Shalika in Section \ref{section: convergence} (see \eqref{Eq: Course Asymp}). Unfortunately, this coarse asymptotic expansion cannot help us to see the dependence of the each summand in the expansion of $W_v$ on the choice of vector $v$. A refined asymptotic expansion is suggested by D. Soudry (see \cite{Sou}), following N. Wallach's method (see \cite[Section 15.2]{Wal2}). Moreover, if we consider a character $\eta$ of the standard Borel subgroup $B_{\GL_3}$ of $\GL_3(F)$, defined by
    \begin{equation}\label{eq: char eta}
        \eta(\mtrthr{t_1}{\star}{\star}{}{t_2}{\star}{}{}{t_3}) := \prod_{i=1}^3\abs{t_i}_F^{u_i}\chi_i(t_i),
    \end{equation}
    where each $u_i\in \BC$ and each $\chi_i$ is a unitary character of the maximal compact subgroup of $F$; and consider a principal series
    \begin{equation}\label{eq: principal series}\pi = \Ind_{B_{\GL_3}}^{\GL_3(F)}\eta,\end{equation}
    then we also write $\pi = \pi_u$, $W_v = W_{v,u}$ for the analytic continuation of the Jacquet integral (see \cite[15.4.1]{Wal2}), to emphasize their dependence on the complex parameters $u = (u_1,u_2,u_3) \in \BC^3$. We can also keep track on the dependence of the asymptotic expansion of $W_{v,u}$ on the complex parameters $u$. Soudry proved these results for orthogonal groups in \cite{Sou}. He also claims that his approach works for all split real reductive groups. Thus, in Section \ref{section: aysmp}, we will follow Soudry's method closely and carry out the detailed proof for $\GL_3(F)$.

    With the refined asymptotic expansion at hand, we are able to establish the meromorphic continuation of the local integrals $Z(W_v,f_s)$ in Section \ref{section:  Mero}. Now we state the theorem.
   \begin{thm}\label{Theorem: Meromorphic Continuation}
       When $F$ is archimedean, the local integral $Z(W_v,f_s)$ extends to a meromorphic function of $s$ on the whole complex plane. Under the projective tensor product topology, $Z(W_v, f_s)$ is a continuous bilinear form on $V_\pi\hat{\otimes} V_{\rho_s}$ (the projective tensor space of two Fr\'{e}chet spaces $V_\pi$ and $V_{\rho_s}$). Moreover, if $\pi = \pi_u$ is a principal series as in \eqref{eq: principal series}, then $Z(W_{v,u}, f_s)$ is also meromorphic in $u$.
   \end{thm}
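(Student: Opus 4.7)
The plan is to combine an Iwasawa decomposition on $\SL_3(F)$ with the refined Whittaker asymptotic expansion of Section \ref{section: aysmp} so as to express $Z(W_v, f_s)$ as a finite sum of Mellin-type integrals in torus variables plus a remainder with absolutely convergent contribution on ever-larger half-planes. This is the standard strategy in the archimedean theory of Rankin--Selberg integrals, and the building blocks are already prepared in the preceding sections.

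First I would write $\SL_3(F) = N_3(F) A(F) K$ via Iwasawa, with $N_3$ the full upper-triangular unipotent of $\SL_3$, $A$ the diagonal torus, and $K$ the standard maximal compact. Picking a one-parameter unipotent complement $N'$ of $N_2$ inside $N_3$, the quotient $N_2(F) \quo \SL_3(F)$ is parameterized by $N'(F) \times A(F) \times K$. Using the Whittaker equivariance $W_v(n' a k) = \psi_N(n') W_v(ak)$, the integral rewrites as
\begin{equation*}
Z(W_v, f_s) = \int_K \int_A W_v(ak) \, \delta_{B_3}(a)^{-1} \, I(a, k; s) \, da \, dk,
\end{equation*}
where $I(a, k; s) := \int_{N'(F)} \psi_N(n') f_s(\gamma n' a k) \, dn'$ is an inner Fourier-type integral involving the open-orbit representative $\gamma = x_{-(\alpha + \beta)}(-1) w_\beta$.

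Next, using the $\RG_2$-structural data and commutation relations of Section \ref{section: structure G2}, I would perform an explicit Iwasawa-type decomposition of $\gamma n' a$ inside $\RG_2(F) = P(F) \cdot K_{\RG_2}$ to exhibit $I(a, k; s)$ as an explicit power of $\delta_P$ in $(a, s)$ times a smooth function continuous in $(a, k)$. Inserting the refined asymptotic expansion of $W_v(ak)$, which takes the form
\begin{equation*}
W_v(ak) = \sum_{\lambda \in \Lambda_N} a^\lambda P_\lambda(\log a) \Phi_\lambda(k) + R_N(a, k),
\end{equation*}
with $\Lambda_N$ a finite set of leading exponents and $R_N$ a remainder decaying faster as $N$ grows, the $A$-integral of each leading term becomes a Mellin transform of the form $\int_A a^{\mu(s, \lambda, u)} P_\lambda(\log a) \cdot (\text{Schwartz in }a) \, da$, which extends meromorphically to all of $\BC$ by standard Tate-theoretic arguments, the poles lying along arithmetic progressions determined by $\lambda$, by the parameter $u$ when $\pi=\pi_u$, and by the exponents of $\delta_P^s$. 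The remainder $R_N$ integrates absolutely on a half-plane whose lower bound tends to $-\infty$ as $N \to \infty$, so $Z(W_v, f_s)$ extends meromorphically in $s$, and jointly meromorphically in $(s, u)$ when $\pi = \pi_u$ is a principal series.

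For the continuity statement, the growth estimates underlying Theorem \ref{Theorem: Absolute Convergence of Integral} together with the Soudry-style refined expansion tracking the dependence on $v$ give bounds that are all controlled by continuous seminorms on $V_\pi$ and $V_{\rho_s}$; thus $Z(W_v, f_s)$ is separately continuous on $V_\pi \times V_{\rho_s}$, and since both spaces are Fr\'echet this upgrades to joint continuity, which by the universal property of the projective tensor product extends to $V_\pi \hat{\otimes} V_{\rho_s}$. The main obstacle, in my view, lies in computing the inner integral $I(a, k; s)$ explicitly: since the $\SL_3$ subgroup of $\RG_2$ is generated by the six long roots and is \emph{not} a Levi subgroup of any parabolic, the Iwasawa decomposition of $\gamma n' a$ inside $\RG_2$ is genuinely delicate and requires careful bookkeeping via the $\RG_2$ commutator relations --- exactly the subtlety flagged in Section \ref{section: structure G2}. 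A secondary difficulty is ensuring that the poles produced by the Mellin transforms of the leading exponents are locally finite and that the refined expansion tracks dependence on $v$ (and on $u$) in a topologically continuous way, so that the continuation is genuinely a meromorphic function rather than merely a formal sum.
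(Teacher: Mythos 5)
Your overall plan — Iwasawa decomposition of $\SL_3(F)$, an inner integral over the one-dimensional complement $N'$ of $N_2$, the Soudry-style refined Whittaker expansion near the origin of the torus, and Mellin-type continuation — is indeed the strategy of the paper. But two steps are misdescribed in a way that would derail the argument if carried out literally.

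First, the inner integral $I(a,k;s)$ is \emph{not} a power of $\delta_P$ in $(a,s)$ times a smooth function of $(a,k)$. After the change of variables, $I(a,k;s)$ is the Jacquet integral of a right translate of $f_s$ viewed as a vector in the principal series $\Ind_{B_{\SL_2}}^{\SL_2(F)}\abs{\,\cdot\,}^{\frac{3s-1}{2}}\otimes\abs{\,\cdot\,}^{-\frac{3s-1}{2}}$ of the $\SL_2$ sitting in the upper-left corner (Lemma \ref{Lemma: Holomorphic continuation of dz integral}). It converges only for $\Re(s)>\tfrac{1}{3}$ and continues holomorphically, but near the corner of the torus it is \emph{not} smooth: in polar coordinates $t_1=r\cos\theta$, $t_2=r\sin\theta$ it has the nontrivial asymptotic expansion $F(r,\theta,s)\sim\sum_k a_k(\theta) r^{\frac{9}{2}s+6k}+\sum_k b_k(\theta) r^{3-\frac{9}{2}s+6k}$. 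Those $s$-dependent exponents must be fed into the Mellin analysis alongside the Whittaker exponents; your version, which treats $I$ as a smooth prefactor after factoring out $\delta_P^s$, would miss an entire family of poles.

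Second, and more seriously, the resulting two-variable integral is not a product of one-variable Tate integrals, because the coefficients $a_k(\theta), b_k(\theta)$ and the mixed factor $(t_1^2+t_2^2)^{-\frac{9}{4}s}$ do not factor over $t_1$ and $t_2$. "Standard Tate-theoretic arguments" do not apply directly. The paper has to pass to polar coordinates in the corner region and prove a bespoke elementary lemma by repeated integration by parts (Proposition \ref{Prop: Technical integral in polar coor} in Appendix \ref{appendix: elementary integral}) to continue integrals of the form $\iint r^{s+a_1}(\ln r)^{a_2}(\cos\theta)^{s+b_1}(\ln\cos\theta)^{b_2}(\sin\theta)^{s+c_1}(\ln\sin\theta)^{c_2}f(\theta,s)\,dr\,d\theta$. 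Your proposal does not anticipate this non-factorizability at all. A smaller omission: the paper uses Dixmier--Malliavin to insert Schwartz cutoffs $\varphi_1,\varphi_2$ in the torus variables (see \eqref{Eq: Expression of C}) so as to control the region where $t_1$ or $t_2$ is large; the refined expansion of Section \ref{section: aysmp} is proved only for $x_1,x_2\geq 0$, i.e.\ $t_1,t_2\leq 1$, and a vague "remainder decays on a half-plane" does not replace this device.
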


   To establish the functional equation, we will also consider a local integral $\tilde{Z}(W_v,f_s)$ obtained by applying the local intertwining operator $M(w_{3\alpha+2\beta})$ to $f_s$. Here, the local intertwining operator is defined by
    \begin{equation*}
       (M(w_{3\alpha+2\beta})f_s)(g) = \int_U f_s(w_{3\alpha+2\beta}^{-1}ug)du
    \end{equation*}
   when $\Re(s)$ is sufficiently large. It has a meromorphic continuation in $s$. The local integrals $\tilde{Z}(W_v,f_s)$ on the other side of the local functional equations are
    \begin{equation*}
       \tilde{Z}(W_v,f_s) := \int_{N_2(F)\quo \text{SL}_3(F)} W_v(g)\cdot (M(w_{3\alpha+2\beta})f_s)(\gamma\cdot g)dg.
    \end{equation*}
    Then $\tilde{Z}(W_v,f_s)$ also has a meromorphic continuation in $s$ and is continuous on $V_{\pi}\hat{\otimes} V_{\rho_s}$ for all parameter $s$, according to Theorem \ref{Theorem: Meromorphic Continuation}. Now we state the Uniqueness Theorem.
    \begin{thm}\label{Theorem: Uniqueness Theorem}
       When $F$ is archimedean, given any irreducible Casselman-Wallach representation $\pi$ whose central character is trivial, there is a discrete, at most countable subset $S$ of $\BC$ depending on $\pi$, such that whenever $s\notin S$, the space of continuous bilinear forms $B(v,f_s)$ satisfying the following equivariant property
       \begin{equation}\label{Eq: Equivariant Property}
          B(\pi(g)v, \rho_s(g)f_s) = B(v, f_s), \quad \forall g\in \SL_3(F),
       \end{equation}
       is at most one dimensional.
    \end{thm}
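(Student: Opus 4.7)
The plan is to convert the uniqueness question into a geometric problem about $\SL_3(F)$-orbits on the flag variety $X = P(F)\backslash \RG_2(F)$, then analyze each orbit via a Bernstein-Zelevinsky-type filtration, with the open-orbit contribution controlled by the uniqueness of the Whittaker model of $\pi$. This is the standard orbit method, carried out here in the Casselman-Wallach framework developed by du Cloux and by Aizenbud-Gourevitch.

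First I would reformulate: a continuous $\SL_3(F)$-invariant bilinear form $B$ on $V_\pi\hat{\otimes} V_{\rho_s}$ corresponds to a continuous $\SL_3(F)$-equivariant map $T_B\colon V_{\rho_s}\to V_\pi^{-\infty}$, where $V_\pi^{-\infty}$ denotes the continuous dual of the Casselman-Wallach representation $\pi$. The restriction $\rho_s\big|_{\SL_3(F)}$ admits a filtration by $\SL_3(F)$-stable subquotients indexed by the finitely many $\SL_3(F)$-orbits on $X$. I would enumerate these as $X_0, X_1, \ldots, X_r$ with $X_0$ the open orbit represented by $\gamma = x_{-(\alpha+\beta)}(-1)w_\beta$, ordered so that each union $X_0 \cup \cdots \cup X_i$ is open. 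For a base point $\gamma_i \in X_i$, the stabilizer in $\SL_3(F)$ is a subgroup $H_i$, and pulling back $\delta_P^{s-1/2}$ through $\Int(\gamma_i)$ defines a character $\chi_{s,i}$ of $H_i$, computable explicitly from the commutator relations of Lemma \ref{Lemma��Commutator Relations} and the Weyl-action formulas of Lemma \ref{Lemma: Adjoint Action of Lie Algebra}.

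Next, on each successive quotient the archimedean version of Frobenius reciprocity for Schwartz sections along an $\SL_3(F)$-orbit identifies the space of $\SL_3(F)$-intertwiners into $V_\pi^{-\infty}$ with the space of continuous $(H_i, \chi_{s,i}\cdot\delta_i)$-equivariant functionals on $V_\pi$, where $\delta_i$ accounts for modular and normal-bundle factors. For the open orbit $X_0$, the stabilizer $H_0$ contains the unipotent subgroup $N_2(F)$ of \eqref{eq: group N2}, on which the twisted character matches the restriction of the generic character $\psi_N$; this restriction is trivial because $\psi_N(\mtrthr{1}{x}{z}{}{1}{-x}{}{}{1}) = \psi(x + (-x)) = 1$. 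The uniqueness of the Whittaker model for the irreducible Casselman-Wallach representation $\pi$ then bounds the contribution of $X_0$ by one. For each $X_i$ with $i\ge 1$, a nonzero $(H_i, \chi_{s,i}\delta_i)$-functional on the irreducible $\pi$ would force $\chi_{s,i}\delta_i$ to match a leading exponent of $\pi$ along a suitable Jacquet module; this is a nontrivial analytic condition in $s$, carving out a discrete, at most countable subset $S_i\subset \BC$. Setting $S = \bigcup_{i\ge 1} S_i$, for $s\notin S$ only the open orbit contributes, and the claimed uniqueness follows.

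The main obstacle will be implementing the Bruhat-type filtration in the Casselman-Wallach setting while ruling out hidden transverse-derivative (``jet'') contributions along the non-open orbits. One must use du Cloux's theory of smooth Fr\'echet sections together with the archimedean analogue of the Bernstein-Zelevinsky geometric lemma to guarantee that the orbit decomposition of $T_B$ is genuinely faithful and that each associated-graded piece is induced from $(H_i,\chi_{s,i}\delta_i)$ without extra normal-jet terms. A secondary technical point is that the stabilizers $H_i$ for $i\ge 1$ are generally non-reductive, so reducing their character-functionals to exponents of $\pi$ requires the refined asymptotic expansion of Whittaker functions developed in Section \ref{section: aysmp}. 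Once these two tools are in place, the orbit-by-orbit analysis becomes essentially algebraic and is driven by the structural Lemmas \ref{Lemma��Commutator Relations} and \ref{Lemma: Adjoint Action of Lie Algebra} of $\RG_2$.
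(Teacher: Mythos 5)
Your overall strategy — filter $\rho_s\big\vert_{\SL_3(F)}$ along $\SL_3(F)$-orbits on $P(F)\backslash \RG_2(F)$ and bound the contribution of each orbit — is exactly the Bruhat-theoretic approach the paper takes (via \cite{BruhatThesis}, after first applying reciprocity to trade $\Bil_{\SL_3}(\pi,\rho_s\vert_{\SL_3})$ for $\Bil_{\RG_2}(\Ind_{\SL_3}^{\RG_2}\pi,\rho_s)$). But there is a genuine gap at the crucial step: you assert that the open-orbit contribution is bounded by one via ``uniqueness of the Whittaker model.'' This does not follow. By Lemma \ref{Lemma: Double Coset Decompostion of G2}, the stabilizer of the open-orbit representative $\gamma$ is
\begin{equation*}
   H_\gamma \;=\; \SL_3(F)\cap\gamma^{-1}P(F)\gamma \;=\; \bigset{n_2\,\mtrthr{a}{}{}{}{1}{}{}{}{a^{-1}}}{n_2\in N_2,\ a\in F^\times},
\end{equation*}
a $3$-dimensional non-reductive group. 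The character of $H_\gamma$ you get by pulling back $\delta_P^{s-1/2}$ is indeed trivial on $N_2$, exactly as you note, but Whittaker uniqueness only controls $(N,\psi_N)$-equivariant functionals for the \emph{full} maximal unipotent $N$ and a \emph{nondegenerate} character; it says nothing about the space of $(H_\gamma,\chi_{s})$-equivariant continuous functionals, which a priori can be large. The paper must therefore run Bruhat theory a \emph{second} time to bound this contribution: it invokes Casselman's subrepresentation theorem to realize $\pi$ as a quotient of a principal series $\Ind_{B_{\SL_3}}^{\SL_3(F)}\sigma$, and then decomposes $B_{\SL_3}\backslash\SL_3(F)/H_\gamma$ into twelve orbits. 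For eleven of these the intersection $B_{\SL_3}\cap w^{-1}H_\gamma w$ (or its $y_j$-analogue) contains a nontrivial one-parameter subgroup, which forces a character-matching condition like $\sigma\otimes(\delta_\gamma)^{y_j}(d_j)=\chi(d_j)$; this is the equation that carves out the discrete countable set $S$. Only the largest orbit, whose stabilizer intersection $H_6^y$ is trivial, contributes the final dimension of at most one. Without this second layer you have no control over the open-orbit multiplicity, and the theorem would not follow.

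A secondary, smaller inaccuracy: by Lemma \ref{Lemma: Double Coset Decompostion of G2}, $\lvert P\backslash \RG_2/\SL_3\rvert = 2$, so there is exactly one non-open orbit, namely the closed orbit through $e$, not a family $X_1,\dots,X_r$. Your argument for ruling out the closed orbit \emph{is} essentially the paper's: the stabilizer $H_e = P(F)\cap\SL_3(F)$ contains the full maximal unipotent subgroup, so a nonzero equivariant functional (including transverse-derivative jets) would force the generic character of $\pi$ to be trivial, which contradicts genericity; this makes the closed-orbit contribution vanish identically, not just away from a discrete set. The discrete exceptional set $S$ arises entirely from the second Bruhat filtration over $B_{\SL_3}\backslash\SL_3(F)/H_\gamma$, not from the first filtration over $P(F)\backslash\RG_2(F)/\SL_3(F)$.
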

    We will prove the above theorem in Section \ref{section: uniqueness thm}. Now that $Z(W_v,f_s)$ and $\tilde{Z}(W_v,f_s)$ are both continuous bilinear forms on $V_\pi\hat{\otimes} V_{\rho_s}$ which are meromorphic in $s$ and satisfy the same equivariant property (\ref{Eq: Equivariant Property}). Thus, there exists a meromorphic function $\Gamma(s,\pi, \Ad, \psi)$ such that
   \begin{equation}\label{Eq: 184}
       \Gamma(s,\pi,\Ad,\psi)Z(W_v,f_s) = \tilde{Z}(W_v,f_s).
   \end{equation}
   We also note that if $\pi = \pi_u$ is a principal series, then $\Gamma(s,\pi_u, \Ad, \psi,)$ is also meromorphic in $u$.
   \begin{rk}
      An analogue of Theorem \ref{Theorem: Uniqueness Theorem} for non-archimedean cases was established in \cite[Proposition 3.6]{HundleyZhangAdjointGL3}, thus the local functional equation \eqref{Eq: 184} is in fact established at all places.
   \end{rk}

   We will also mention some results for non-archimedean local integrals in Subsection \ref{subsection: non-archi remark}, yet proofs will not be given here since we wish to focus on the archimedean local theory. In a forthcoming paper, we will compute the local gamma factors $\Gamma(s,\pi,\Ad,\psi)$ explicitly. To complete the local theory at archimedean cases, we also need to solve the g.c.d problem, i.e. the poles of the local integrals are governed by the $L$-function, and the L-function can be obtained by testing some smooth (or even $K$-finite) functions. These work is in progress.

   Finally, I greatly thank my advisor Dihua Jiang for introducing me to this wonderful topic and Lei Zhang for some helpful advices on the current version of this paper.


\section{Convergence of Local Integrals}\label{section: convergence}


    \subsection{Archimedean Case}

        Let us first start with the archimedean case and assume that $F= \BR$ or $\BC$ throughout this Subsection. We will retain all notations in Subsection \ref{subsection: organization }. We have to distinguish two absolute values $\abs{\quad}$ and $\abs{\quad}_F$ when $F=\BC$. The absolute value with the subscript is defined by $\abs{z}_\BC = z\bar{z}$, while the absolute value without the subscript is the ordinary one. The absolute value $\abs{\quad}$ will mainly appear in the Iwasawa decomposition of the elements in the lower unipotent subgroups of $\RG_2(F)$. We fix a maximal compact subgroup $K_{\RG_2}$ such that the following Iwasawa decomposition holds:
            \begin{equation}\label{Eq: Iwasawa for G2}
                 \RG_2(F) = N_{\RG_2}A_{\RG_2}K_{\RG_2},
            \end{equation}
        where $N_{\RG_2}$ is the standard maximal unipotent subgroup of $\RG_2(F)$ generated by all positive roots. Then $K_{\SL_3}$ = $\SL_3(F)\bigcap K_{\RG_2}(F)$ is a maximal compact subgroup of $\SL_3(F)$. Hence
            \begin{equation}\label{eq: Iwasawa decomposition}
                 K_{\SL_3} = \begin{cases} \SO_3(\BR) & \text{ if $F=\BR$},\\
                                \SU_3     & \text{ if $F=\BC$}.
                             \end{cases}
            \end{equation}

        Let us prove a simple lemma first.
        \begin{lemma}\label{lemma: archimedean lemma}
               For any function $f_s\in V_{\rho_s}$ (see \eqref{eq: local induced repn G2}), we have
        \begin{equation}\label{Eq: 06}
               \begin{aligned}
                     &f_s(\gamma \mtrthr{1}{z}{}{}{1}{}{}{}{1}\mtrthr{t_1t_2}{}{}{}{t_2}{}{}{}{t_1^{-1}t_2^{-2}} g) \\
                              = &\abs{t_1t_2^3}_F^{3s}\cdot\Big\lvert\abs{t_1^{-1}z}^2+(1+\abs{t_2}^2)^3\Big\lvert^{-\frac{3}{2}s}_F\cdot f_s(k''(-\frac{-t_1^{-1}z}{(1+\abs{t_2}^2)^{\frac{3}{2}}})k'(t_2)w_\beta g)
               \end{aligned}
        \end{equation}
        where $k'$ and $k''$ are smooth functions taking values in $K_{\RG_2}$.
        \end{lemma}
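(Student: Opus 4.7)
The plan is to compute the explicit Iwasawa decomposition $\gamma u(z)\tau = p\cdot k$ with $p\in P$ and $k\in K_{\RG_2}$, and to read off both $\delta_P^s(p)$ and the form of $k$ on the right.

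\textbf{Stage 1 (reduction to an element of $\bar U$).} Identify $\mtrthr{1}{z}{}{}{1}{}{}{}{1}$ with $x_\beta(z)$ via the long-root embedding $\SL_3\hookrightarrow \RG_2$. Using the explicit cocharacter formulas $\alpha^\vee(r)=\mathrm{diag}(r^{-1},r^2,r^{-1})$ and $\beta^\vee(r)=\mathrm{diag}(r,r^{-1},1)$, decompose $\tau=\alpha^\vee(t_1 t_2^2)\beta^\vee(t_1^2 t_2^3)$. Combining the commutation $u(z)\tau = \tau\,u(z/t_1)$, the $\SL_2$-relation $w_\beta x_\beta(s)w_\beta^{-1} = x_{-\beta}(-s)$ from Lemma \ref{Lemma: Adjoint Action of Lie Algebra}, and the Weyl action $s_\beta(\alpha^\vee)=\alpha^\vee+3\beta^\vee$, one computes $\tau'':= w_\beta\tau w_\beta^{-1} = \alpha^\vee(t_1 t_2^2)\beta^\vee(t_1 t_2^3)$, with $\delta_P(\tau'') = |t_1 t_2^3|_F^3$. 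Since $(\alpha+\beta)(\tau'')=t_2$, pushing $x_{-(\alpha+\beta)}(-1)$ past $\tau''$ produces $x_{-(\alpha+\beta)}(-t_2)$, and the two negative-root factors commute (their root-sum $-(\alpha+2\beta)$ is not a root of $G_2$), yielding
\[
\gamma u(z)\tau \;=\; \tau''\cdot x_{-(\alpha+\beta)}(-t_2)\cdot x_{-\beta}(-z/t_1)\cdot w_\beta.
\]
Here $\tau''\in P$ already contributes the leading factor $|t_1 t_2^3|_F^{3s}$.

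\textbf{Stage 2 (Iwasawa of the $\bar U$-part).} Set $g_0:= x_{-(\alpha+\beta)}(-t_2)\,x_{-\beta}(-z/t_1) = p_0\cdot k_0$, and extract $\delta_P(p_0)$. I use the $14$-dimensional adjoint representation of $\RG_2$ on $\fg_2$: its highest weight vector is $X_{3\alpha+2\beta}$ with weight $\omega_2 = 3\alpha+2\beta$, which is $s_\alpha$-invariant and hence extends to a character of $P$ satisfying $\delta_P = 3\omega_2$. Equipping $\fg_2$ with the $K_{\RG_2}$-invariant positive definite form in which root spaces are pairwise orthogonal, with long-root vectors of norm $1$ and short-root vectors of norm $\sqrt{3}$, the identity $\mathrm{Ad}(p_0^{-1})X_{3\alpha+2\beta}=\omega_2(p_0)^{-1}X_{3\alpha+2\beta}$ yields $|\omega_2(p_0)|^{-1}\|X_{3\alpha+2\beta}\| = \|\mathrm{Ad}(g_0^{-1})X_{3\alpha+2\beta}\|$. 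Computing the right-hand side via the commutator cascade of Lemma \ref{Lemma��Commutator Relations}---the powers $\mathrm{ad}(X_{-(\alpha+\beta)})^k X_{3\alpha+2\beta}$ for $k=0,1,2,3$ pass through $X_{3\alpha+2\beta},\,c_2 X_{2\alpha+\beta},\,c_2 c_3 X_\alpha,\,c_2 c_3 c_4 X_{-\beta}$ and vanish for $k\ge 4$, with Chevalley constants of absolute values $|c_2|=1$, $|c_3|=2$, $|c_4|=3$; separately, $\mathrm{ad}(X_{-\beta})X_{3\alpha+2\beta} = c_1 X_{3\alpha+\beta}$ with $|c_1|=1$---the norm-squared expands via the binomial coefficients $1,3,3,1$ into
\[
\|\mathrm{Ad}(g_0^{-1})X_{3\alpha+2\beta}\|^2 \;=\; (1+|t_2|^2)^3 + |z/t_1|^2.
\]
Hence $|\delta_P(p_0)| = \bigl((1+|t_2|^2)^3 + |z/t_1|^2\bigr)^{-3/2}$, supplying the second scalar factor.

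\textbf{Stage 3 (the compact part and main obstacle).} The compact element $k_0 w_\beta$ is recovered by two nested $\SL_2$-Iwasawa decompositions: first, $\SL_2^{\alpha+\beta}$-Iwasawa of $x_{-(\alpha+\beta)}(-t_2)$ yields the compact element $k'(t_2)$ depending only on $t_2$; second, after rescaling the $-\beta$-parameter by $(1+|t_2|^2)^{3/2}$ (forced by $\beta((\alpha+\beta)^\vee(r))=r^3$), $\SL_2^\beta$-Iwasawa yields the compact element $k''(\cdot)$ with the stated argument $-(-t_1^{-1}z)/(1+|t_2|^2)^{3/2}$. The principal technical obstacle lies in Stage 2: the clean binomial $(1+|t_2|^2)^3 = 1+3|t_2|^2+3|t_2|^4+|t_2|^6$ emerges on the nose only because the three consecutive Chevalley constants in the cascade have absolute values exactly $1,2,3$ (a feature specific to $G_2$) and the long-to-short root-space norm ratio is $1:\sqrt{3}$; any other combination would spoil the cubic form.
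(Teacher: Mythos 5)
Your proposal is correct, and Stage 2 takes a genuinely different route from the paper. The paper does not compute $\delta_P$ of the Iwasawa $P$-part in one shot: after establishing \eqref{Eq: simple conjugation} it performs two nested $\SL_2$-Iwasawa decompositions (first for $x_{-(\alpha+\beta)}(-t_2)$, then for the rescaled $x_{-\beta}$-factor), evaluates $\delta_P^s$ on each of the two torus pieces separately, and multiplies them to obtain $\abs{\abs{t_1^{-1}z}^2+(1+\abs{t_2}^2)^3}_F^{-3s/2}$. You instead extract the scalar directly from $\norm{\Ad(g_0^{-1})X_{3\alpha+2\beta}}$, using that the highest-root line is $P$-stable with character $\omega_2=3\alpha+2\beta$ and $\delta_P=3\omega_2$, plus a $K_{\RG_2}$-invariant norm. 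The arithmetic holds up: your structure constants $\abs{c_2}=1,\abs{c_3}=2,\abs{c_4}=3$ are the correct $p+1$ values for the $G_2$ string through $-(\alpha+\beta)$, and combined with the $1:\sqrt{3}$ long/short norm ratio they reproduce the binomial weights $1,3,3,1$, giving $(1+\abs{t_2}^2)^3+\abs{z/t_1}^2$ on the nose; this is a more conceptual explanation of why the cubic form appears than the paper's brute combination of two Iwasawa tori. The trade-off is that your Stage 3 must still carry out the same two explicit $\SL_2$-Iwasawa decompositions to identify the compact factors $k'$ and $k''$, and those decompositions already hand you the torus elements from which $\delta_P^s$ falls out immediately; so Stage 2 ends up a useful cross-check rather than a labor-saving shortcut. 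One notational correction: for $F=\BC$ the identity should read $\delta_P(p_0)=\abs{(1+\abs{t_2}^2)^3+\abs{z/t_1}^2}_F^{-3/2}$ with the normalized module $\abs{\cdot}_F=\abs{\cdot}^2$ (not the ordinary modulus as you wrote); this is in fact exactly what your squared-norm computation delivers, since $\delta_P=\abs{\det\Ad(\cdot)|_{\Lie(U)}}_F$, so the exponents then match the lemma in both the real and complex cases.
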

        \begin{proof}
        By Lemma \ref{Lemma£ºCommutator Relations}, $x_{-\beta}$ and $x_{-(\alpha+\beta)}$ commutes. Then the following computation holds:
                \begin{equation}\label{Eq: simple conjugation}
                      \gamma\cdot \mtrthr{1}{z}{}{}{1}{}{}{}{1}\cdot\mtrthr{t_1t_2}{}{}{}{t_2}{}{}{}{t_1^{-1}t_2^{-2}} = \mtrthr{t_2}{}{}{}{t_1t_2}{}{}{}{t_1^{-1}t_2^{-2}}x_{-\beta}(-t_1^{-1}z)x_{-(\alpha+\beta)}(-t_2)w_\beta.
                \end{equation}
        We have the following Iwasawa decomposition of $x_{-(\alpha+\beta)}(-t_2)$ in $\RG_2(F)$:
                \begin{equation}\label{Eq:05}
                       x_{-(\alpha+\beta)}(-t_2) = \mtrthr{(1+\abs{t_2}^2)^{-1}}{}{}{}{(1+\abs{t_2}^2)^{\frac{1}{2}}}{}{}{}{(1+\abs{t_2}^2)^{\frac{1}{2}}}x_{\alpha+\beta}(-\bar{t_2})k'(t_2),
                \end{equation}
        where $k'$ is a smooth function taking values in the maximal compact subgroup  $K_{\RG_2}$, and $\bar{t_2}$ is the complex conjugate of $t_2$. Then by \eqref{Eq: simple conjugation}, \eqref{Eq:05} and the invariance of $f_s$, we get
        \begin{equation*}
                    \begin{aligned}
                  &f_s(\gamma \mtrthr{1}{z}{}{}{1}{}{}{}{1}\mtrthr{t_1t_2}{}{}{}{t_2}{}{}{}{t_1^{-1}t_2^{-2}} g)\\
                       =& \abs{t_1t_2^3}_F^{3s} f_s(x_{-\beta}(-t_1^{-1}z)x_{-(\alpha+\beta)}(-t_2)w_\beta g)\\
                       =& \abs{t_1t_2^3}_F^{3s} f_s(x_{-\beta}(-t_1^{-1}z)\mtrthr{(1+\abs{t_2}^2)^{-1}}{}{}{}{(1+\abs{t_2}^2)^{\frac{1}{2}}}{}{}{}{(1+\abs{t_2}^2)^{\frac{1}{2}}}x_{\alpha+\beta}(-\bar{t_2})k'(t_2)w_\beta g)\\
                       =& \abs{t_1t_2^3}_F^{3s}\cdot \abs{1+\abs{t_2}^2}_F^{-\frac{9}{2}s}\cdot f_s(x_{-\beta}(\frac{-t_1^{-1}z}{(1+\abs{t_2}^2)^{\frac{3}{2}}})x_{\alpha+\beta}(-\bar{t_2})k'(t_2)w_\beta g).\\
                    \end{aligned}
        \end{equation*}
        By Lemma \ref{Lemma£ºCommutator Relations}, $x_{-\beta}(-\frac{t_1^{-1}z}{(1+\abs{t_2}^2)^{\frac{3}{2}}})x_{\alpha+\beta}(-\bar{t_2})x_{-\beta}(\frac{t_1^{-1}z}{(1+\abs{t_2}^2)^{\frac{3}{2}}})$ belongs to the maximal unipotent subgroup of $\RG_2(F)$. Hence
        \begin{equation*}
        f_s(x_{-\beta}(-\frac{t_1^{-1}z}{(1+\abs{t_2}^2)^{\frac{3}{2}}})x_{\alpha+\beta}(-\bar{t_2})k'(t_2)w_\beta g) = f_s(x_{-\beta}(-\frac{t_1^{-1}z}{(1+\abs{t_2}^2)^{\frac{3}{2}}})k'(t_2)w_\beta g).
        \end{equation*}
        We also have the Iwasawa decomposition of $x_{-\beta}(-z)$:
        \begin{equation*}
        x_{-\beta}(-z) = \mtrthr{(\abs{z}^2+1)^{-\frac{1}{2}}}{}{}{}{(\abs{z}^2+1)^{\frac{1}{2}}}{}{}{}{1}x_{\beta}(-\bar{z})k''(z),
        \end{equation*}
        where $k''(z)$ is a smooth function taking values in $K_{\RG_2}$, and $\bar{z}$ is the complex conjugate of $z$. Then the invariant property of $f_s$ yields
        \begin{equation*}
        \begin{aligned}
        &f_s(x_{-\beta}(-\frac{t_1^{-1}z}{(1+\abs{t_2}^2)^{\frac{3}{2}}})k'(t_2)w_\beta g) \\
        =&  \Big\lvert\abs{-\frac{t_1^{-1}z}{(1+\abs{t_2}^2)^{\frac{3}{2}}}}^2+1\Big\lvert_F^{-\frac{3}{2}s}f_s(k''(-\frac{t_1^{-1}z}{(1+\abs{t_2}^2)^{\frac{3}{2}}})k'(t_2)w_\beta j(k)).
        \end{aligned}
        \end{equation*}
        Combining all of the above, we get (\ref{Eq: 06}).
        \end{proof}

        Now we return to the proof of Theorem \ref{Theorem: Absolute Convergence of Integral}.
        \begin{proof}[Proof of Theorem \ref{Theorem: Absolute Convergence of Integral}, the Archimedean Case]
        Using the Iwasawa decomposition for $\SL_3(F)$, we obtain that
        \begin{equation}\label{Eq:03}
        Z(W_v,f_s) = \int_F\int_{A_{\SL_3}}\int_{K_{\SL_3}} W_v(\mtrthr{1}{z}{}{}{1}{}{}{}{1}ak)f_s(\gamma\cdot \mtrthr{1}{z}{}{}{1}{}{}{}{1}ak)\delta_{B_{\SL_3}}^{-1}(a)dkdadz.
        \end{equation}
        Here
        \begin{equation}\label{eq: positive torus}A_{\SL_3} = \bigset{a = \mtrthr{t_1}{}{}{}{t_2}{}{}{}{t_1^{-1}t_2^{-1}}}{t_1,t_2>0},\end{equation} and $K_{\SL_3}$ is the maximal compact subgroup of $\SL_3(F)$ as in \eqref{eq: Iwasawa decomposition}. Thus, by Lemma \ref{lemma: archimedean lemma}, we get
        \begin{equation}\label{Eq: 05}
       \begin{aligned}
            Z(W_v,f_s) &=\int_F\int_{(\BR_+^\times)^2}\int_{K_{\SL_3}}W_v(\mtrthr{t_1t_2}{}{}{}{t_2}{}{}{}{t_1^{-1}t_2^{-2}}k)\psi(z)\abs{t_1^{3s-4}t_2^{9s-6}}_F\cdot\\ &\Big\lvert\abs{t_1^{-1}z}^2+(1+t_2^2)^3\Big\lvert^{-\frac{3}{2}s}_F\cdot f_s(k''(-\frac{-t_1^{-1}z}{(1+t_2^2)^{\frac{3}{2}}})k'(t_2)w_\beta k)dkd^\times t_1d^\times t_2dz.
       \end{aligned}
       \end{equation}
       We change the variable $z \mapsto t_1(1+t_2^2)^{\frac{3}{2}}z$ and rewrite \eqref{Eq: 05} as
       \begin{equation}\label{Eq: 04}
          \begin{aligned}
             Z(W_v,f_s) &=\int_F\int_{(\BR_+^\times)^2}\int_{K_{\SL_3}}W_v(\mtrthr{t_1t_2}{}{}{}{t_2}{}{}{}{t_1^{-1}t_2^{-2}}k)\psi(t_1(1+t_2^2)^{\frac{3}{2}}z)\abs{t_1}_F^{3s-3}\abs{t_2}_F^{9s-6}\\
               \quad&\cdot\Big\lvert\abs{z}^2+1\Big\lvert^{-\frac{3}{2}s}_F\cdot \Big\lvert 1+t_2^2 \Big\lvert_F^{-\frac{9}{2}s+\frac{3}{2}}f_s(k''(-z)k'(t_2)w_\beta k) dkd^\times t_1d^\times t_2dz.
          \end{aligned}
       \end{equation}
       Under the assumption that $\pi$ has a trivial central character, we get
       \begin{equation*}
        W_v(\mtrthr{t_1t_2}{}{}{}{t_2}{}{}{}{t_1^{-1}t_2^{-2}}k) = W_v(\mtrthr{t_1\cdot t_1t_2^3}{}{}{}{t_1t_2^3}{}{}{}{1}k).
       \end{equation*}
      We change the variable $t_2'=t_1t_2^3$ and rewrite (\ref{Eq: 04}) as
      \begin{equation}\label{Eq:07}
        \begin{aligned}
           Z(W_v,f_s)
                      &= \int_F\int_{(\BR_+^\times)^2}\int_{K_{\SL_3}} W_v(\mtrthr{t_1t_2'}{}{}{}{t_2'}{}{}{}{1}k)\psi(t_1(1+t_1^{-\frac{2}{3}}t_2'^{\frac{2}{3}})^{\frac{3}{2}}z)\\
                      &\cdot\Big\lvert\frac{t_1^{3s}t_2'^{3s}}{(t_1^{\frac{2}{3}}+t_2'^{\frac{2}{3}})^{\frac{9}{2}s}}\Big\lvert_F\cdot\abs{t_1^{-2}t_2'^{-2}}_F\cdot\abs{\abs{z}^2+1}_F
          ^{-\frac{3}{2}s}\cdot\abs{t_1^{\frac{2}{3}}+t_2'^{\frac{2}{3}}}^{\frac{3}{2}}_F\\
          &\cdot \frac{1}{3}f_s(k''(-z)k'(t_1^{-\frac{1}{3}}t_2'^{\frac{1}{3}})w_\beta j(k)) dkd^\times t_1d^\times t_2dz.
          \end{aligned}
      \end{equation}
      To save notations, we change $t_2'\mapsto t_2$ in \eqref{Eq:07}. Proposition 2 in \cite[Section 4.3]{J-S} provides a coarse asymptotic expansion of Whittaker functions along the torus: there exists a finite set $X$ of finite functions such that
      \begin{equation}\label{Eq: Course Asymp}
       W_v(\mtrthr{t_1t_2}{}{}{}{t_2}{}{}{}{1}k) = \sum_{\xi\in X} \varphi_\xi(t_1,t_2,k)\xi(t_1,t_2),
      \end{equation}
      where $\varphi_\xi\in S(F^2\times K_{\GL_3})$ are Schwartz functions. After integrating over $K_{\GL_3}$ in (\ref{Eq:07}) and using the asymptotic expansion \eqref{Eq: Course Asymp}, we can see that $Z(W_v,f_s)$ is majorized by a finite linear combination of some integrals of the following type
      \begin{equation*}
       \int_F\int_{(\BR_+^\times)^2} \varphi(t_1,t_2)\xi(t_1,t_2)\cdot\bigg\lvert{\bigg(\frac{t_1^\frac{2}{3}t_2^\frac{2}{3}}{t_1^{\frac{2}{3}}+t_2^{\frac{2}{3}}}\bigg)^{\frac{9}{2}s-\frac{3}{2}}}\bigg\rvert_F \cdot \abs{t_1^{-1}t_2^{-1}}_F\cdot\big\lvert{(\abs{z}^2+1)^{-\frac{3}{2}s}}\big\rvert_F d^\times t_1 d^\times t_2 dz,
      \end{equation*}
      where $\varphi$ is a nonnegative Schwartz function on $F^2$.

      Let us first assume that Re$(s)> \frac{1}{3}$, then clearly the $dz$-integral converges. By Cauchy inequality, we have
      \begin{equation*}
       \bigg\lvert{\bigg(\frac{t_1^\frac{2}{3}t_2^\frac{2}{3}}{t_1^{\frac{2}{3}}+t_2^{\frac{2}{3}}}\bigg)^{\frac{9}{2}s-\frac{3}{2}}}\bigg\rvert_F\leq \abs{t_1^\frac{1}{3}t_2^\frac{1}{3}\big}_F^{\frac{9}{2}\text{Re}(s)-\frac{3}{2}}.
      \end{equation*}
      Thus, $Z(W_v,f_s)$ is majorized by a finite linear combination of integrals of the following type:
      \begin{equation*}
       \int_{(\BR_+^\times)^2}\varphi(t_1,t_2)\xi(t_1,t_2)\cdot\abs{t_1t_2}^{\frac{3}{2}\text{Re}(s)-\frac{3}{2}}_F d^\times t_1 d^\times t_2.
      \end{equation*}
      Therefore, when Re$(s)$ is sufficiently large, $Z(W_v, f_s)$ converges absolutely.
      \end{proof}


    \subsection{Some Quick Remarks on the Non-archimedean Case}\label{subsection: non-archi remark}

    Let us conclude this Section by making some remarks in the non-archimedean case. In fact, we can prove Theorem \ref{Theorem: Absolute Convergence of Integral} in the non-archimedean case via the same method. However, we can move one step further and prove convergence of the local integral and its meromorphic continuation together in the non-archimedean case by using the $K$-finiteness conditions of $W_v$ and $f_s$. Here we reformulate the theorem as below, whose proof can be found in my thesis (see \cite[Theorem 3.2.1]{TianThesis})
    \begin{thm}\label{Theorem: Meromorphic Continuation p-adic}
          Suppose that $F$ is non-archimedean. The local integral $Z(W_v,f_s)$ converges absolutely when $\Re(s)$ is sufficiently large. They are rational functions of $q^{-3s}$, where $q$ is the cardinality of the residue field. Hence they automatically enjoy a meromorphic continuation to the whole complex plane. When $\pi$ is a principal series (not necessarily unramified): $$\pi = \Ind_{B_{\GL_3}}^{\GL_3(F)} \abs{\quad}^{u_1}\chi_1 \otimes \abs{\quad}^{u_2}\chi_2\otimes \abs{\quad}^{u_3}\chi_3,$$
          where $u_1,u_2,u_3\in \BC$ and each $\chi_j$ $(j=1,2,3)$ is a character of the group of units of the ring of the integers of $F$, the local integrals \eqref{Def: local archi integral} are also meromorphic in $u_1,u_2,u_3$.
    \end{thm}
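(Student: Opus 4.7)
The plan is to adapt the archimedean argument of the previous subsection but to replace the soft asymptotic estimates with exact finiteness statements that follow from $K$-finiteness of $W_v$ and $f_s$. First I would repeat the Iwasawa-decomposition computation of Lemma \ref{lemma: archimedean lemma}: the identities used there are purely algebraic (commutator relations in $\RG_2$ and the standard Iwasawa decomposition of $x_{-\beta}(\cdot)$ and $x_{-(\alpha+\beta)}(\cdot)$) and go through verbatim over any local field once $|\cdot|$ is interpreted as the normalized absolute value on $F$. This reduces $Z(W_v,f_s)$ to an integral over $F\times (F^\times)^2\times K_{\SL_3}$, exactly as in \eqref{Eq:07}, after the change of variable $t_2'=t_1 t_2^3$.

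Second, I would use $K$-finiteness to discretize. Since $W_v$ is right-invariant under some open compact subgroup $K_1\sbst K_{\GL_3}$ and $f_s$ under some $K_2\sbst K_{\RG_2}$, the integrand on $K_{\SL_3}$ is locally constant and the $dk$-integral becomes a finite sum. Third, I would invoke the Casselman--Jacquet asymptotic expansion for non-archimedean Whittaker functions on $\GL_3$: restricted to the positive torus,
\begin{equation*}
  W_v\!\left(\mtrthr{t_1 t_2}{}{}{}{t_2}{}{}{}{1}k\right)
  \;=\;\sum_{i=1}^N \chi_i(t_1,t_2)\,\varphi_i(t_1,t_2,k),
\end{equation*}
where each $\chi_i$ is a quasi-character built from the Langlands parameters of $\pi$ and each $\varphi_i$ is locally constant and compactly supported in the singular direction. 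When $\pi=\pi_u$ is a principal series, the $\chi_i$ depend on $u$ as monomials in $q^{-u_j}$, and the $\varphi_i$ can be chosen independent of $u$ on a fixed open compact subgroup, by standard meromorphic continuation of the Jacquet integral.

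Fourth, I would evaluate the $z$-integral. After the change of variable leading to \eqref{Eq:07}, the $z$-integral takes the form
\begin{equation*}
  \int_F \psi(c(t_1,t_2)z)\,|\,|z|^2+1\,|_F^{-\frac{3s}{2}}\, f_s(k''(-z)k'(\cdot)w_\beta k)\, dz,
\end{equation*}
with $f_s(k''(-z)\cdots)$ locally constant in $z$ on a fixed compact-modulo-center set by $K_{\RG_2}$-finiteness, and $|\,|z|^2+1\,|_F^{-3s/2}$ equal to $|z|_F^{-3s}$ for $|z|$ large; splitting the integral over $|z|\le 1$ and $|z|>1$, the tail becomes a geometric series in $q^{-3s}$ and the bulk contributes finitely many terms, producing a rational function of $q^{-3s}$.

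Finally, I would assemble the pieces: the remaining $d^\times t_1\,d^\times t_2$ integrals reduce, by the above asymptotic expansion and local constancy of the $\varphi_i$, to finite $\BC$-linear combinations of one-dimensional local Tate-type integrals
\begin{equation*}
  \int_{F^\times} |t|_F^{a s + b(u)}\,\chi(t)\,\varphi(t)\,d^\times t,
\end{equation*}
each of which is absolutely convergent for $\Re(s)$ large and equals a rational function of $q^{-3s}$ (and of $q^{-u_j}$ in the principal-series case). Multiplying through yields the stated rationality and hence meromorphic continuation. The main obstacle I anticipate is keeping careful track of the exponents of $t_1,t_2$ generated by the factor $|t_1^{2/3}+t_2^{2/3}|_F$ after the change of variable $t_2'=t_1 t_2^3$ and by the asymptotic expansion, and, in the principal-series case, confirming that the $u$-dependence of the asymptotic exponents $\chi_i$ propagates cleanly through the Tate integrals to give joint meromorphy in $(s,u_1,u_2,u_3)$; this is a bookkeeping issue more than a conceptual one, controlled by Bernstein's continuation principle applied to the family $\{\pi_u\}$.
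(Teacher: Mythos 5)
The paper itself does not prove Theorem~\ref{Theorem: Meromorphic Continuation p-adic}: Subsection~\ref{subsection: non-archi remark} explicitly defers the proof to the author's thesis \cite[Theorem 3.2.1]{TianThesis}, so there is no in-paper argument to compare against. What the paper does say is that the result follows from ``the $K$-finiteness conditions of $W_v$ and $f_s$,'' and your overall strategy --- Iwasawa decomposition, $K$-finiteness to replace the compact integral by a finite sum, the Casselman asymptotic expansion of the Whittaker function along the torus, and reduction to Tate-type integrals that are visibly rational in $q^{-3s}$ (and in $q^{-u_j}$) --- is exactly the right toolkit and is consistent with that hint. The appeal to Bernstein-type continuation for joint meromorphy in $(s,u)$ is also reasonable.

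There are, however, two concrete places where what you have written would not literally work. First, the claim that the Iwasawa computations of Lemma~\ref{lemma: archimedean lemma} ``go through verbatim over any local field'' is false. Those formulas use $(1+\abs{z}^2)^{\pm 1/2}$ and complex conjugation $\bar z$, which are archimedean-specific; over a non-archimedean $F$ one has instead the valuation-based decomposition $x_{-\beta}(z)\in K$ for $\abs{z}_F\le 1$ and $x_{-\beta}(z)=x_\beta(z^{-1})\,\mathrm{diag}(z^{-1},z,1)\,k$ for $\abs{z}_F>1$, and similarly for $x_{-(\alpha+\beta)}$. This actually makes the bookkeeping easier (everything becomes piecewise constant in valuation), but the $dz$-integral you wrote down, with the factor $\abs{\,\abs{z}^2+1\,}_F^{-3s/2}$ and the smooth $k''(-z)$, is an archimedean object; the non-archimedean argument must start from the $\max$-type Iwasawa decomposition instead. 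Second, and more seriously, the change of variable $t_2'=t_1 t_2^3$ and the resulting factor $\abs{t_1^{2/3}+t_2^{2/3}}_F$ are not well-defined over a non-archimedean $F$: the cube map on $F^\times$ is neither injective nor surjective in general, and fractional powers like $t_1^{2/3}$ have no canonical meaning. In the archimedean calculation this substitution is harmless because it is a diffeomorphism of $\BR_{>0}^2$, but it does not transfer. You should instead keep the original torus variables $(t_1,t_2)$ (or substitute by integer powers only, using the triviality of the central character to rewrite the Whittaker argument as $\mathrm{diag}(t_1^2 t_2^3, t_1 t_2^3, 1)$ up to the center) and apply the Casselman asymptotic expansion directly; the resulting $d^\times t_1\,d^\times t_2$ integrals are then products of Tate integrals in $t_1$ and $t_2$ with exponents that are integer linear combinations of $s$ and the Langlands parameters, giving the asserted rationality in $q^{-3s}$ without ever introducing cube roots.

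Neither issue is fatal to the plan --- both are repaired by replacing the archimedean-specific formulas with their valuation-based analogues --- but as written the ``verbatim'' reduction in your first step and the fractional-power change of variable in your last step are gaps that need to be closed before the argument is correct.
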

    I also proved the following theorem in my thesis in detail (\cite[Theorem 4.1.1]{TianThesis}):
    \begin{thm}\label{Theorem: Non-archi local int const}
        When $F$ is non-archimedean, there exists a Whittaker function $W_v\in\mathcal{W}(\pi,\psi)$ and a function $f_s\in V_{\rho_s}$ such that $Z(W_v,f_s)=1$ for every $s\in \BC$.
    \end{thm}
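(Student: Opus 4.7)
The idea is to construct test vectors $W_v$ and $f_s$ so that the integrand $W_v(g)f_s(\gamma g)$ reduces, up to a positive scalar, to the characteristic function of a small open compact subset of $N_2(F)\backslash \SL_3(F)$. The integral will then evaluate to a positive constant independent of $s$, and rescaling $W_v$ by a positive real number yields $Z(W_v,f_s)=1$ simultaneously for all $s\in \BC$.

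Concretely, first I would pick a sufficiently small open compact subgroup $K_0'\subset \RG_2(F)$ and set $K_0:=K_0'\cap \SL_3(F)$. By the standard theory of Whittaker models for smooth admissible representations of $p$-adic groups, there exists $W_v\in \mathcal{W}(\pi,\psi)$ that is right $K_0$-invariant and satisfies $W_v(I)=1$; right $K_0$-invariance then forces $W_v(k)=1$ for every $k\in K_0$. Next, I would define $f_s\in V_{\rho_s}$ to be supported on $P(F)\gamma K_0'$ with
\begin{equation*}
f_s(p\gamma k) = \delta_P^{s}(p), \qquad p\in P(F),\ k\in K_0'.
\end{equation*}
For this to yield a well-defined element of $V_{\rho_s}$ for every $s\in \BC$ simultaneously, one needs $\gamma K_0'\gamma^{-1}\cap P(F)\subset U(F)$, so that $\delta_P^s$ is identically $1$ on the overlap. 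This can be arranged by shrinking $K_0'$ enough, exploiting the fact that $\gamma$ lies in the open orbit of $P(F)\backslash \RG_2(F)/\SL_3(F)$.

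With these choices the $N_2$-invariance needed to regard $f_s(\gamma \cdot)$ as a function on $N_2(F)\backslash \SL_3(F)$ is automatic from $\gamma N_2(F)\gamma^{-1}\subset U(F)$ (this inclusion is built into Ginzburg's construction and is the reason $N_2$ is the natural group of integration). On the $\SL_3$-side, the support of $f_s(\gamma\,\cdot\,)$ is then precisely $N_2(F)K_0$, and on this support both $W_v$ and $f_s(\gamma\,\cdot\,)$ equal $1$. Hence
\begin{equation*}
Z(W_v,f_s) = \mathrm{vol}\big(K_0/(K_0\cap N_2(F))\big),
\end{equation*}
a positive rational number independent of $s$. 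Replacing $W_v$ by its quotient by this volume produces the required normalization $Z(W_v,f_s)=1$ for every $s\in \BC$.

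The main technical obstacle I expect is the careful verification that $K_0'$ can be shrunk so that $\gamma K_0'\gamma^{-1}\cap P(F)\subset U(F)$. This is a Bruhat-style computation in $\RG_2(F)$ near $\gamma$: using the one-parameter root subgroups $x_\delta(t)$ and the commutator relations from Subsection \ref{section: structure G2}, one must show that for $k$ in a sufficiently small neighborhood of the identity in $\RG_2(F)$, the projection of $\gamma k\gamma^{-1}$ to the Levi $M$ of $P$ is trivial, i.e.\ the torus component in its Iwasawa decomposition vanishes. Once this structural claim is established, the remaining evaluation of the integral is an easy volume computation and the normalization is immediate.
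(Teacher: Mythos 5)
Your overall strategy — forcing the integrand to be supported on a small set by choosing $f_s$ with small support modulo $P(F)$ — is the right starting point, but there is a genuine gap in the support computation. By Lemma~\ref{Lemma: Double Coset Decompostion of G2}, the stabilizer $\SL_3^\gamma=\SL_3(F)\cap\gamma^{-1}P(F)\gamma$ contains not only $N_2(F)$ but also the one-parameter torus $A:=\set{\mathrm{diag}(a,1,a^{-1})}{a\in F^\times}$. For every $a\in F^\times$ one has $\gamma\,\mathrm{diag}(a,1,a^{-1})=\bigl(\gamma\,\mathrm{diag}(a,1,a^{-1})\,\gamma^{-1}\bigr)\gamma\in P(F)\gamma\subset P(F)\gamma K_0'$, so $\mathrm{diag}(a,1,a^{-1})$ lies in the support of $f_s(\gamma\,\cdot\,)$ for \emph{all} $a$, with value $\delta_P^{s}(\gamma\,\mathrm{diag}(a,1,a^{-1})\,\gamma^{-1})$, a non-trivial power of $\abs{a}_F$. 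Hence the support of $f_s(\gamma\,\cdot\,)$ modulo $N_2(F)$ is not $K_0$ but contains the whole non-compact torus $A$, and with your $W_v$ (only required to be $K_0$-invariant with $W_v(I)=1$) the integral $Z(W_v,f_s)$ is a Tate-type integral in $a$, not a volume, and it still depends on $s$. This residual torus is exactly where the $s$-dependence and the $L$-function come from; if your support claim were correct, the unramified computation \eqref{Eq: local unramified computation 1} could not hold. Your structural claim $\gamma K_0'\gamma^{-1}\cap P(F)\subset U(F)$ does not rescue the argument, because the obstruction is the non-compact torus $A\subset\SL_3^\gamma$, not a compact piece coming from $K_0'$.

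The missing ingredient is control of the support of $W_v$ in the torus direction. In the non-archimedean case, the Bernstein--Zelevinsky (Kirillov model) theory for $\GL_3(F)$ guarantees that the restriction of the Whittaker model to the mirabolic subgroup contains every smooth function of compact support modulo $N$ with the correct $\psi$-equivariance. Hence one may choose $W_v$ to be right-invariant under a small compact open $K_0$ and to restrict on $\set{\mathrm{diag}(a_1,a_2,1)}{a_1,a_2\in F^\times}$ to the characteristic function of the units of the ring of integers of $F$ in each coordinate. With such a $W_v$, the $A$-integration is confined to units, where $\delta_P^{s}(\gamma\,\mathrm{diag}(a,1,a^{-1})\,\gamma^{-1})\equiv 1$; only then does $Z(W_v,f_s)$ collapse to a fixed positive volume independent of $s$, which can be normalized to $1$. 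This additional choice of $W_v$ with compactly supported Kirillov restriction is the essential step your proposal omits.
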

    The above two theorems can be viewed as special cases of the recent work of Hundley and Zhang (see \cite[Lemma 3.1]{HundleyZhangAdjointGL3}), yet unfortunately they did not provide enough details in their paper.

    Now we fix one non-trivial additive character $\psi$ of $F$. By Pontryagin Duality, any non-trivial additive character of $F$ is of the form
    \begin{equation*}
        \psi_c(x) = \psi(cx) \quad\text{for some } c\in F^\times.
    \end{equation*}
    To emphasize the dependence of Whittaker functions on the additive character, we write $W_v^\psi$ for a Whittaker function in $\mathcal{W}(\pi,\psi)$. Note that if $W_v^\psi\in\mathcal{W}(\pi,\psi)$, then the function $W_v^{\psi_c}$ defined by
    \begin{equation*}
        W_v^{\psi_c}(g) = W_v^{\psi}(\mtrthr{c}{}{}{}{1}{}{}{}{c^{-1}}g)
    \end{equation*}
    lives in $\mathcal{W}(\pi,\psi_c)$. It is easy to check that
    \begin{equation}\label{Eq: 26}
       Z(W_v^{\psi_c},f_s) = \abs{c}_F^{-3s+3} Z(W_v^{\psi},f_s).
    \end{equation}
    When we vary the Whittaker models of $\pi$ and consider all such local integrals $Z(W_v,f_s)$, all of these $Z(W_v,f_s)$ span a subspace of $\BC(q^{-3s})$ which is closed under multiplication by $q^{-3s}$ and $q^{3s}$. It is a fractional ideal containing 1 by Theorem \ref{Theorem: Non-archi local int const}. Thus, it is generated by a function $\frac{1}{P(q^{-3s})}$ for some polynomial $P$. We expect that $\frac{1}{P(q^{-3s})}$ is essentially the adjoint L-function for $\pi$, in the sense of Langlands. We will consider this problem in a future work.




\section{Asymptotic Expansion of the Whittaker Functions (Archimedean Cases)} \label{section: aysmp}


    In this Section, we assume that $F$ is either $\BR$ or $\BC$. Because the local representation $\pi$ satisfies the moderate growth property, for any $a$ in the torus of $\GL_3(F)$, the Whittaker function $W_v(a)$ behave like a Schwartz function when $a$ approaches to infinity. Thus, the asymptotic behaviour of $W_v(a)$ near $0$ should be responsible for the poles of the local integrals $Z(W_v,f_s)$.

    Recall that in Section \ref{section: convergence}, we use the coarse asymptotic expansion \eqref{Eq: Course Asymp} to prove the Convergence Theorem (Theorem \ref{Theorem: Absolute Convergence of Integral}). The Schwartz functions $\varphi_\xi$ in (\ref{Eq: Course Asymp}) depend on the vector $v\in V_\pi$. Unfortunately, when we vary $v\in V_\pi$, we can not see how all these $\varphi_\xi$ change accordingly. In this Section, we will follow Soudry's method (see \cite{Sou}) closely and obtain an asymptotic expansion of $W_v(a)$ near 0. We will see that each term in the desired asymptotic expansion in this Section is continuous in $v$.

    \subsection{First Step}
    We first fix a norm on $\GL_3(F)$: for any real or complex matrix $g$, define its Euclidean norm
    \begin{equation*}
       \norm{g}_e = \text{Tr}(g\cdot\bar{g}^{t})^{\frac{1}{2}} = (\sum \abs{g_{ij}}^2)^{\frac{1}{2}}.
    \end{equation*}
    Moreover, when $g\in\GL_3(F)$, we define its Harish-Chandra norm by
    \begin{equation*}
       \norm{g}_{H} = \norm{g}_e^2+\norm{g^{-1}}_e^2.
    \end{equation*}
    In particular, when $g=\mtrthr{a_1}{}{}{}{a_2}{}{}{}{a_3}$, $\norm{g}_H = \sum_{i=1}^3 \abs{a_i}^2+\abs{a_i}^{-2}$. It is easy to see that
    \begin{equation*}
        \norm{\mtrthr{a_1}{}{}{}{a_2}{}{}{}{a_3}\mtrthr{b_1}{}{}{}{b_2}{}{}{}{b_3}}_H\leq \norm{\mtrthr{a_1}{}{}{}{a_2}{}{}{}{a_3}}_H\cdot\norm{\mtrthr{b_1}{}{}{}{b_2}{}{}{}{b_3}}_H.
    \end{equation*}
    In the following, we will always use Harish-Chandra norm and drop subscript $H$.

    As in the Introduction, we assume that $\pi$ is an irreducible generic representation of $\GL_3(F)$. Moreover, if we consider a principal series $\pi$ as in \eqref{eq: principal series}, then we also write $\pi = \pi_u, W_v = W_{v,u}$ to emphasize their dependence on the complex parameters $u = (u_1,u_2,u_3) \in \BC^3$.

    By the continuity of the Whittaker functional, there exist a $\mu>0$ and a continuous seminorm $q$ on $V_\pi$ such that
    \begin{equation}\label{Eq: 08}
       \abs{W_v(g)}\leq \norm{g}^{\mu}\cdot q(v),
    \end{equation}
    for any $v\in V_\pi$. If a compact set $\Omega\in \BC^3$ is given, then a similar inequality
    \begin{equation}\label{Eq: 07}
        \abs{W_{v,u}(g)}\leq \norm{g}^{\mu}\cdot q(v)
    \end{equation}
    also holds for all $v\in V_{\pi_u}$, $u\in \Omega$. The constant $\mu$ and seminorm $q$ in (\ref{Eq: 07}) only depends on $\Omega$.

    Put
    \begin{equation*}
       \begin{aligned}
       H_0 &= -\mtrthr{1}{}{}{}{1}{}{}{}{1}, &H_1&= -\mtrthr{1}{}{}{}{0}{}{}{}{0}, &H_2&= -\mtrthr{1}{}{}{}{1}{}{}{}{0},\\
       X_1 &= \mtrthr{0}{1}{}{}{0}{}{}{}{0}, &X_2&= \mtrthr{0}{}{}{}{0}{1}{}{}{0}, &X_3&= \mtrthr{0}{}{1}{}{0}{}{}{}{0},
       \end{aligned}
    \end{equation*}
    and set $\alpha_1 = \eps_1-\eps_2$, $\alpha_2 = \eps_2-\eps_3$. Then
    \begin{equation*}
       \alpha_1(H_0) = \alpha_2(H_0) = 0, \alpha_1(H_1) = \alpha_2(H_2)=-1, \alpha_1(H_2) = \alpha_2(H_1) = 0.
    \end{equation*}

    For $m=1,2,$ let $P_m$ be the standard parabolic subgroup of $\GL_3(F)$ of type $(m,3-m)$ with the Levi decomposition $P_m=M_mU_m$. Denote by $V_0$ ($V_{0,u}$ resp.) the underlying $(\frak{gl}_3, K_{\GL_3})$-module of $V_\pi$ ($V_{\pi_u}$ resp.). Set $K_m = M_m\cap K_{\GL_3}$. By \cite[Section 4.3.1]{Wal1}, for any positive integer $k$, $V_0/\Lie(U_m)^kV_0$ is a finitely generated admissible $(\Lie(M_m), K_m)$-module, and it decomposes into finitely many generalized $H_m$-eigenspaces
    \begin{equation*}
       V_0/\text{Lie}(U_m)^kV_0 = \bigoplus_{\xi} \big(V_0/\text{Lie}(U_m)^kV_0\big)_\xi,
    \end{equation*}
    where
    \begin{equation*}
       (V_0/\text{Lie}(U_m)^kV_0\big)_\xi := \set{\bar{v}\in V_0/\text{Lie}(U_m)^kV_0}{(\pi(H_m)-\xi)^d\bar{v} = 0 \text{ for some }d>0}.
    \end{equation*}
    Denote by $E_k^{(m)}$ ($E_{k,u}^{(m)}$ resp.) the finite set of generalized eigenvalues of $H_m$ on \\$V_0/\text{Lie}(U_m)^kV_0$ ($V_{0,u}/\text{Lie}(U_m)^kV_{0,u}$ resp.). In particular, $E_1^{(m)}$ ($E_{1,u}^{(m)}$ resp.) is the finite set of generalized eigenvalues of $H_m$ on $V_0/\text{Lie}(U_m)V_0$ ($V_{0,u}/\text{Lie}(U_m)V_{0,u}$ resp.). Define
    \begin{equation*}
         E^{(m)} := \bigcup_{k=1}^{+\infty} E_k^{(m)},\qquad E_u^{(m)} := \bigcup_{k=1}^{+\infty} E_{k,u}^{(m)}.
    \end{equation*}
    By \cite[Section 4.4.2]{Wal1},
    \begin{equation*}
       E^{(m)} \sbst \set{\xi-n}{\xi\in E_1^{(m)}, n\in\BN},\quad  E_u^{(m)} \sbst \set{\xi-n}{\xi\in E_{1,u}^{(m)}, n\in\BN}.
    \end{equation*}
    Moreover, by \cite[Section 12.4.6 and 12.4.7]{Wal2}, $E_{1,u}^{(m)}$ consists of polynomials of $u_1,u_2,u_3$ of degree 1.

    In \cite[Section 15.2.4]{Wal2}, Wallach proves the following lemma (we only state the lemma in the case of $\frak{gl}_3$).
    \begin{lemma}\label{Lemma: Wallach 15.2.4}
       For each $m\in \{1,2\}$, and $k_m\in\BN$, there exists an integer $N(m)\in\BN$, a finite set $\{e_i^{(m)}\}_{i=1}^{N(m)}\sbst \textit{U}(\mathfrak{gl}_3(\BC))$, a finite set $\{D_{r,i}^{(m)}\}\sbst\textit{U}(\frak{gl}_3(\BC))$, where $r$ indexes the basis of monomials $X_r^{(m)}$ in $\Lie(U_m)^{k_m}$, (for example, if $m=1$, $X_r^{(1)}$ is of the form $X_3^{l_3}X_2^{l_2}$) and $i = 1,2,\cdots, N(m)$, such that for any $(\frak{gl}_3, K_{\GL_3})$-module $V_0$, there exists an $N(m)\times N(m)$ matrix $B^{(m,k_m)} = (B_{ij}^{(m,k_m)})$ depending on $V_0$ such that
       \begin{enumerate}
         \item $e_1^{(m)} = 1$,
         \item For any $v\in V_0$,
               \begin{equation}\label{Eq: Equation from Wallach 15.2.4}
                   \pi(H_m)\pi(e_i^{(m)})v = \sum_j B_{ij}^{(m,k_m)}\pi(e_j^{(m)})v+\sum_r\pi(X_r^{(m)})\pi(D_{r,i}^{(m)})v.
               \end{equation}
       \end{enumerate}
    \end{lemma}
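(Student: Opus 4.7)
The plan is to follow Wallach's algebraic argument in \cite[Section 15.2.4]{Wal2} and specialize it to $\mathfrak{g} = \mathfrak{gl}_3(\BC)$ with the parabolic $\mathfrak{p}_m = \Lie(P_m)$. The construction takes place inside $U(\mathfrak{gl}_3)$ and exploits the PBW theorem together with the triangular decomposition $\mathfrak{gl}_3 = \overline{\mathfrak{u}}_m \oplus \mathfrak{m}_m \oplus \mathfrak{u}_m$, where $\mathfrak{u}_m = \Lie(U_m)$, $\mathfrak{m}_m = \Lie(M_m)$, and $\overline{\mathfrak{u}}_m$ denotes the opposite nilradical. Since $\mathfrak{u}_m$ is abelian (it is the nilradical of a maximal parabolic in $\mathfrak{gl}_3$), the quotient $U(\mathfrak{u}_m)/\mathfrak{u}_m^{k_m}$ is a finite-dimensional commutative algebra, and a PBW monomial basis of $\mathfrak{u}_m^{k_m} \subset U(\mathfrak{u}_m)$ supplies the family $\{X_r^{(m)}\}$ in the statement.

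The key structural input is that $\ad(H_m)$ acts on $\mathfrak{u}_m$ by the scalar $-1$, so that $[H_m, X^\gamma] = -|\gamma|X^\gamma$ for every monomial $X^\gamma \in U(\mathfrak{u}_m)$. I would construct the elements $e_i^{(m)}$ iteratively: start with $e_1^{(m)} = 1$, and at each stage compute $H_m \cdot e_i^{(m)}$ in PBW form and split the result into two types of contributions. Any term whose $\mathfrak{u}_m$-factor has degree $\geq k_m$ factors on the left as $X_r^{(m)}\, D_{r,i}^{(m)}$, with $X_r^{(m)} \in \mathfrak{u}_m^{k_m}$ and $D_{r,i}^{(m)} \in U(\mathfrak{gl}_3)$, and is absorbed into the correction sum; any remaining term is a PBW monomial of $\mathfrak{u}_m$-degree strictly less than $k_m$, which is appended to the list $\{e_j^{(m)}\}$ if not already present. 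For each module $V_0$ the matrix $B^{(m,k_m)}$ then records the scalar coefficients expressing $\overline{H_m e_i^{(m)} v}$ in terms of $\{\overline{e_j^{(m)} v}\}$ inside $V_0/\mathfrak{u}_m^{k_m} V_0$; its dependence on $V_0$ enters precisely through the action of $U(\mathfrak{m}_m)$ on $V_0$.

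The main obstacle is proving the finiteness and universality of the set $\{e_i^{(m)}\}$: the iterative enlargement must terminate with a bound $N(m)$ depending only on $m$ and $k_m$, and not on $V_0$. The abelianness of $\mathfrak{u}_m$ keeps the $\mathfrak{u}_m$-degree confined to the range $[0, k_m-1]$, but the $U(\mathfrak{m}_m)$-factor is more delicate because $\ad(H_m)$ is not scalar on $\mathfrak{m}_m$; the heart of Wallach's argument is the combinatorial control on the growth of the $\mathfrak{m}_m$-degree under successive PBW reductions modulo $U(\mathfrak{gl}_3)\cdot\mathfrak{u}_m^{k_m}$. Translating this bookkeeping into the explicit $\mathfrak{gl}_3$ framework of the present paper, and checking that the resulting $e_i^{(m)}$, $D_{r,i}^{(m)}$ are indeed independent of $V_0$ while $B^{(m,k_m)}$ captures all the $V_0$-dependent information, is the main technical step that needs to be carried out in detail.
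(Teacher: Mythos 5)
The paper does not prove this lemma at all: it is quoted verbatim (specialized to $\mathfrak{gl}_3$) from Wallach \cite[Section 15.2.4]{Wal2}, with the explicit sentence ``Wallach proves the following lemma (we only state the lemma in the case of $\mathfrak{gl}_3$).'' So there is no paper-internal argument to compare against; what you have written is a partial reconstruction of Wallach's argument. The broad outline you give is consistent with the Wallach framework --- PBW with respect to $\bar{\mathfrak{u}}_m \oplus \mathfrak{m}_m \oplus \mathfrak{u}_m$, abelianness of $\mathfrak{u}_m$ (correct for both maximal parabolics in $\mathfrak{gl}_3$), the $\ad(H_m)$-eigenvalue $-1$ on $\mathfrak{u}_m$, and iterative construction of the $e_i$ with high-degree $\mathfrak{u}_m$-terms absorbed as $X_r^{(m)}D_{r,i}^{(m)}$. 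You also rightly flag the termination/finiteness of $\{e_i^{(m)}\}$ as the crux.

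However, your diagnosis of where the difficulty lies is wrong on a specific point. You write that ``the $U(\mathfrak{m}_m)$-factor is more delicate because $\ad(H_m)$ is not scalar on $\mathfrak{m}_m$.'' In fact $H_m$ is central in $\mathfrak{m}_m$: $H_1 = -E_{11}$ is the (negative) block identity of the $\mathfrak{gl}_1$ factor in $\mathfrak{m}_1 = \mathfrak{gl}_1\oplus\mathfrak{gl}_2$, and $H_2 = -(E_{11}+E_{22})$ is the block identity of the $\mathfrak{gl}_2$ factor in $\mathfrak{m}_2 = \mathfrak{gl}_2\oplus\mathfrak{gl}_1$. The paper even records $\alpha_1(H_2)=\alpha_2(H_1)=0$, i.e. $\ad(H_m)$ vanishes on the root spaces of the Levi. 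So $\ad(H_m)$ \emph{is} scalar (zero) on $\mathfrak{m}_m$, and there is no $\mathfrak{m}_m$-degree growth coming from $\ad(H_m)$ to control. The real subtlety --- which your sketch leaves entirely unaddressed --- is why the $e_i^{(m)}$ and $D_{r,i}^{(m)}$ can be chosen \emph{independently of $V_0$} while the matrix $B^{(m,k_m)}$ captures all $V_0$-dependence; a naive iteration starting from $e_1 = 1$ would produce $H_m, H_m^2, \dots$ without a universal cutoff. Wallach's argument leans on the structure theory behind the statement recalled just above the lemma in the paper, namely that $V_0/\mathrm{Lie}(U_m)^kV_0$ is a finitely generated admissible $(\mathrm{Lie}(M_m),K_m)$-module with finitely many generalized $H_m$-eigenvalues, and that structural input has to enter somewhere; as written your plan never uses it. You also have a side error in writing ``modulo $U(\mathfrak{gl}_3)\cdot\mathfrak{u}_m^{k_m}$'' --- the correction terms in the lemma have $X_r^{(m)}$ on the \emph{left}, i.e. you should be reducing modulo $\mathfrak{u}_m^{k_m}\cdot U(\mathfrak{gl}_3)$, which is what makes the later Whittaker-functional estimate (producing the decaying factor $e^{-k_m t}$) go through.
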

    If we take $V_0$ to be the $(\mathfrak{gl}_3, K_{\GL_3})$-module of $\pi_u$ in the above lemma, then we write
    \begin{equation*}
        B^{(m,k_m)} =  B^{(m,k_m)}_u = (B_{ij,u}^{(m,k_m)})
    \end{equation*}
    to emphasize the dependence on the complex parameter $u$. If we fix one $k_m$, we also omit the superscript $k_m$ and write $B^{(m)} (B^{(m)}_u$ resp.) for $B^{(m,k_m)}$ ($B^{(m,k_m)}_u$ resp.). Thus, we can conclude from Lemma \ref{Lemma: Wallach 15.2.4} that all of the generalized eigenvalues of $B^{(m,k_m)}_u$ are contained in $E_{k_m,u}^{(m)}$, and hence they are polynomials of $u$ of degree 1. Moreover by \cite[Section 12.4.7]{Wal2}, each entry of $B^{(m,k_m)}_u$ is rational in $u_1,u_2,u_3$.

    We label all elements in $E^{(m)} = \set{\xi_j^{(m)}}{j=1,2,\cdots}$ in the descending order, i.e.
    \begin{equation*}
       \text{Re}(\xi_1^{(m)})\geq \text{Re}(\xi_2^{(m)})\geq\cdots.
    \end{equation*}
    We also write $\xi_{j,u}^{(m)}$ when we emphasize their dependence on parameters. In this Section, we fix two numbers $\xi_m$ for $m=1,2$. Later, when we prove Theorem \ref{Theorem: Meromorphic Continuation}, we will choose $\xi_1$, $\xi_2$ as negative as we want to cover enough generalized eigenvalues.

    \begin{thm}\label{Theorem: Asymptotic Expansion second variable}
       Fix $\xi_2\in \BC$. Suppose $V_0 = V_{0,u}$, where $u$ runs in a fixed closed ball $\Omega = B(u_0,r_0)\in \BC^3$ ($u_0$ is the center) with a radius $r_0>0$.  Let $k_2$ be a sufficiently large positive integer such that
       \begin{equation*}
             -\Re(\xi_2)-k_2+2\mu<-1.
       \end{equation*}
       There is a finite set $C_u^{(2)}\sbst\bigcup_{j=1}^{k_2}E_{j,u}^{(2)}$, and a finite set $\mathcal{L}$ of nonnegative integers, a finite subset $\mathcal{P}_u\sbst \BC(u)[t]$ and a finite subset $\mathcal{D}\sbst U(\frak{gl}_3(\BC))$ such that for all $v\in V_{0,u}$, $x_2 \geq 0$, $W_{v,u}(e^{x_1H_1+x_2H_2})$ is a finite linear combination of terms of the following types:
       \begin{enumerate}
         \item  $e^{x_2\xi_u}p_u(x_2)W_{\pi(e)v,u}(e^{x_1H_1}),$
         \item  $e^{x_2\xi_u}h_u(x_2)\int_{0}^{x_2}e^{(-\xi_u-k_2)t}t^lW_{\pi(D)v,u}(e^{x_1H_1+tH_2})dt := h_u(x_2)\tilde{\phi}_{l,e,D}(x_1,x_2,v,u),$
         \item  $e^{x_2\xi_u}h_u(x_2)\int_0^{+\infty}e^{(-\xi_u-k_2)t}t^lW_{\pi(D)v,u}(e^{x_1H_1+tH_2})dt := e^{x_2\xi_u}h_u(x_2)\phi_{l,e,D}(x_1,x_2,v,u) ,$
         \item  $e^{x_2\xi_u}h_u(x_2)\int_{x_2}^{+\infty}e^{(-\xi_u-k_2)t}t^lW_{\pi(D)v,u}(e^{x_1H_1+tH_2})dt := h_u(x_2)\tilde{\phi}_{l,e,D}(x_1,x_2,v,u),$

       \end{enumerate}
       where $p_u, h_u\in\mathcal{P}_u$, $e,D \in \mathcal{D}$, and $\xi_u\in C_u^{(2)}$ are polynomials of $u$ of degree 1. For each $\xi_u\in C_u^{(2)}$, we decompose $\Omega = \Omega_{\xi_u,1}\bigcup\Omega_{\xi_u, 2}$ where
       \begin{equation*}
            \begin{aligned}
               \Omega_{\xi_u,1} := \set{u\in \Omega}{-k_2-\Re(\xi_u)+2\mu \geq -\frac{1}{2}},\\
               \Omega_{\xi_u,2} := \set{u\in \Omega}{-k_2-\Re(\xi_u)+2\mu \leq -\frac{1}{3}}.\\
            \end{aligned}
       \end{equation*}

       If $u\in \Omega_{\xi_u,1}$, then we only need terms of type (1) and type (2). The function $\phi_{l,e,D}(x_1,x_2,v,u)$ defined in type (2) is holomorphic in $u$ and uniformly continuous in $x_1,x_2,v$ when $u$ runs in $\Omega_{\xi_u,1}$. It satisfies the following estimate
       \begin{equation}\label{Eq: Estimate of First Asymp Exp 2nd term 1}
           \begin{aligned}
           &\big\lvert\tilde{\phi}_{l,e,D}(x_1,x_2,v,u)\big\rvert \leq e^{\Re(\xi_2)x_2}\cdot \tilde{\delta}_{l}(x_2)\cdot\norm{e^{x_1H_1}}^\mu \cdot q'(v)
           \end{aligned}
        \end{equation}
       where $q'$ is a continuous seminorm and $\tilde{\delta}_l(t) = \frac{t^{l+1}}{l+1}$.

       If $u\in \Omega_{\xi_u,2}$, we only need terms of type (1), (3) and (4). The functions \\
       $\phi_{l,e,D}(x_1,x_2,v,u)$ and $\tilde{\phi}_{l,e,D}(x_1,x_2,v,u)$ are also holomorphic in $u\in \Omega_{\xi_u,2}$ and uniformly continuous in $x_1,x_2,v$ when $u$ runs in $\Omega_{\xi_u,2}$. They satisfy the following estimates:
       \begin{equation}\label{Eq: Estimate of First Asymp Exp 3rd term}
           \big\lvert \phi_{l,e,D}(x_1,x_2,v,u)\big\rvert \leq \delta_l(x_2)\cdot\norm{e^{x_1H_1}}^\mu\cdot q'(v);
       \end{equation}
       \begin{equation}\label{Eq: Estimate of First Asymp Exp 4th term 1}
           \big\lvert \tilde{\phi}_{l,e,D}(x_1,x_2,v,u)\big\lvert
           \leq e^{\Re(\xi_2)x_2}\tilde{\delta}_l(x_2)\cdot\norm{e^{x_1H_1}}^\mu\cdot q'(v),
       \end{equation}
       where $q'$ is a continuous seminorm, $\delta_l(t)$ is the constant function
       \begin{equation*}
            \delta_l(t) = \int_{0}^{+\infty}e^{-\frac{1}{3}t}t^ldt
       \end{equation*}
       and $\tilde{\delta}_l(t)$ is a polynomial of $t$ of degree $l$ with constant coefficients coming from integration by parts.

       Same expansions and estimates also hold if we drop the subscript $u$ and consider the underlying $(\frak{gl}_3, K_{\GL_3})$-module for all irreducible generic Casselman-Wallach representations.
    \end{thm}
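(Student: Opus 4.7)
The plan is to adapt Soudry's method in \cite{Sou}, which in turn rests on Wallach's technique \cite[Section 15.2]{Wal2}: translate Lemma \ref{Lemma: Wallach 15.2.4} (with $m=2$ and $k_2$ large) into a first-order inhomogeneous linear ODE system in the variable $x_2$, then solve by variation of parameters and read off the terms of types (1)--(4).

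I apply Lemma \ref{Lemma: Wallach 15.2.4} to obtain the $N(2)\times N(2)$ matrix $B_u^{(2)}$ and the elements $e_i^{(2)},D_{r,i}^{(2)}\in U(\mathfrak{gl}_3(\BC))$, $X_r^{(2)}\in\Lie(U_2)^{k_2}$. Setting $F_i(x_1,x_2):=W_{\pi(e_i^{(2)})v,u}(e^{x_1H_1+x_2H_2})$, so that $F_1=W_{v,u}$ because $e_1^{(2)}=1$, right-differentiation in $x_2$ combined with (\ref{Eq: Equation from Wallach 15.2.4}) yields
\[
\partial_{x_2}\vec F=B_u^{(2)}\vec F+\vec\Phi,\qquad \vec\Phi_i=\sum_r W_{\pi(X_r^{(2)}D_{r,i}^{(2)})v,u}(e^{x_1H_1+x_2H_2}).
\]
The forcing term admits a sharp estimate: because $\Lie(U_2)$ is spanned by the two commuting root vectors $X_2=E_{2,3}$ and $X_3=E_{1,3}$ and $\psi_N'(X_3)=0$, the Whittaker equivariance under $\Lie(N)$, combined with the fact that $\Ad(e^{x_1H_1+x_2H_2})$ rescales $X_2$ by $e^{-x_2}$, shows that only pure monomials $X_2^{l_2}$ with $l_2\geq k_2$ contribute nontrivially on the torus, each producing a factor $e^{-l_2 x_2}$. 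Combined with the moderate-growth bound (\ref{Eq: 07}), this yields $|\vec\Phi(x_1,x_2,v,u)|\leq C e^{(2\mu-k_2)x_2}\|e^{x_1H_1}\|^\mu q(v)$ uniformly on $\Omega$.

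Variation of parameters with initial data $F_i(x_1,0)=W_{\pi(e_i^{(2)})v,u}(e^{x_1H_1})$ then gives
\[
\vec F(x_1,x_2)=e^{x_2B_u^{(2)}}\vec F(x_1,0)+\int_0^{x_2}e^{(x_2-t)B_u^{(2)}}\vec\Phi(x_1,t,v,u)\,dt.
\]
Putting $B_u^{(2)}$ into Jordan form, $e^{xB_u^{(2)}}$ is a finite $\BC$-linear combination of terms $e^{\xi_u x}x^jM_j(u)$, with $\xi_u\in C_u^{(2)}\subseteq\bigcup_{j\leq k_2}E_{j,u}^{(2)}$ a polynomial of degree $1$ in $u$ and $M_j(u)$ a matrix rational in $u$ (by \cite[Section 12.4.7]{Wal2}). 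The homogeneous part yields type~(1). Expanding $(x_2-t)^j$ binomially and substituting the explicit form of $\vec\Phi$ turns the particular part into a finite sum of integrals of the form $e^{\xi_u x_2}h_u(x_2)\int_0^{x_2}e^{(-\xi_u-k_2)t}t^l W_{\pi(D)v,u}(e^{x_1H_1+tH_2})\,dt$.

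For each $\xi_u\in C_u^{(2)}$ the two regions are handled separately. On $\Omega_{\xi_u,1}$ (where $-\Re(\xi_u)-k_2+2\mu\geq-\tfrac{1}{2}$) I keep the integral as $\int_0^{x_2}$, giving type~(2). On $\Omega_{\xi_u,2}$ (where $-\Re(\xi_u)-k_2+2\mu\leq-\tfrac{1}{3}<0$) I split $\int_0^{x_2}=\int_0^{+\infty}-\int_{x_2}^{+\infty}$, producing types~(3) and~(4); the $\int_0^{+\infty}$ piece converges because of the negative exponent. The bounds (\ref{Eq: Estimate of First Asymp Exp 2nd term 1}), (\ref{Eq: Estimate of First Asymp Exp 3rd term}), and (\ref{Eq: Estimate of First Asymp Exp 4th term 1}) all reduce to elementary estimates of $\int t^l e^{at}\,dt$ on the relevant intervals, using the choice $-\Re(\xi_2)-k_2+2\mu<-1$ to dominate $e^{\Re(\xi_u)x_2}$ by $e^{\Re(\xi_2)x_2}$ in types~(2) and~(4). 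Holomorphy in $u$ on each $\Omega_{\xi_u,j}$ follows from rationality of $B_u^{(2)}$ and the Jacquet-integral construction of $W_{v,u}$, while uniform continuity in $(x_1,x_2,v)$ comes from the continuous-seminorm form of the Whittaker bound. I expect the main obstacle to be the forcing-term analysis: the precise exponential decay of $\vec\Phi$ depends on the specific embedding $\Lie(U_2)\subset\Lie(\GL_3)$ (in particular $\psi_N'(X_3)=0$) and on carefully tracking the $\Ad$-action of the torus, and it is exactly this step that tailors Soudry's general method to the $\GL_3$ setting.
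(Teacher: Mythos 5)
Your proposal is correct and follows essentially the same route as the paper: derive the first-order ODE system from Lemma \ref{Lemma: Wallach 15.2.4} with $m=2$, show the forcing term decays like $e^{(2\mu-k_2)t}$ because the $X_3$-contributions vanish (since $d\psi_N(X_3)=0$) while $X_2^{k_2}$ picks up $e^{-k_2 t}$ under $\Ad$ of the torus, solve by variation of parameters, and split $\int_0^{x_2}$ into $\int_0^\infty-\int_{x_2}^\infty$ precisely on $\Omega_{\xi_u,2}$ where convergence holds. The only cosmetic slip is ``$l_2\geq k_2$'': the monomial basis of $\Lie(U_2)^{k_2}$ in Lemma \ref{Lemma: Wallach 15.2.4} consists of degree-$k_2$ words, so the surviving term is exactly $X_2^{k_2}$ with $l_2=k_2$; this does not affect the estimate.
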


    \begin{proof}[Proof of Theorem \ref{Theorem: Asymptotic Expansion second variable}]
        We only look at the case when $\pi = \pi_u$ and keep track on the dependence on the parameters $u$. A word by word repetition also works for all irreducible admissible generic Casselman-Wallach representations $\pi$ when we drop the subscript $u$.

        For $k_2$, we choose $N(2)\in\BN$, $\{e_i^{(2)}\}_{i=1}^{N(2)}$, $\{D_{r,i}^{(2)}\}$, $B_u^{(2)}$ as in Lemma \ref{Lemma: Wallach 15.2.4}. Put
        \begin{equation*}
           \vec{F}_u(x_1,t,v) := \left(
                                \begin{array}{c}
                                  W_{\pi(e_1^{(2)})v,u}(e^{x_1H_1+tH_2}) \\
                                  \cdots \\
                                  W_{\pi(e_{N(2)}^{(2)})v,u}(e^{x_1H_1+tH_2}) \\
                                \end{array}
                              \right),
        \end{equation*}
        and
        \begin{equation*}
           \vec{G}_u(x_1,t,v) := \left(
                                \begin{array}{c}
                                  \sum_r W_{\pi(X_r^{(2)})\pi(D_{r,1}^{(2)})v,u}(e^{x_1H_1+tH_2}) \\
                                  \cdots \\
                                  \sum_r W_{\pi(X_r^{(2)})\pi(D_{r,N(2)}^{(2)})v,u}(e^{x_1H_1+tH_2}) \\
                                \end{array}
                              \right).
        \end{equation*}
       According to (\ref{Eq: Equation from Wallach 15.2.4}), it is clear that
        \begin{equation*}
           \begin{aligned}
           &\frac{d}{dt}W_{\pi(e_i^{(2)})v,u}(e^{x_1H_1+tH_2}) \\
           = &\sum_{j}B_{ij,u}^{(2)}\cdot W_{\pi(e_j^{(2)})v,u}(e^{x_1H_1+tH_2})+\sum_r W_{\pi(X_r^{(2)})\pi(D_{r,i}^{(2)})v,u}(e^{x_1H_1+tH_2}),
           \end{aligned}
        \end{equation*}
        i.e.
        \begin{equation}\label{Eq: Whittaker differential eq}
           \frac{d}{dt}\vec{F}_u(x_1,t,v) = B^{(2)}_u\vec{F}_u(x_1,t,v)+\vec{G}_u(x_1,t,v).
        \end{equation}

        Note that Lie$(U_2)$ is spanned by $X_3$ and $X_2$, so every $X_r^{(2)}$ is of the form $X_3^{l_3}X_2^{l_2}$ with $l_3+l_2=k_2$.
        If $l_3=0$, in other words $l_2=k_2$ and $X_r^{(2)} = X_2^{k_2}$, we define $\tilde{D}_{r,i}^{(2)}={D}_{r,i}^{(2)}$. Then we have
        \begin{equation*}
           W_{\pi(X_r^{(2)})\pi(D_{r,i}^{(2)})v,u}(e^{x_1H_1+tH_2}) = C_\psi e^{-k_2t}W_{\pi(\tilde{D}_{r,i}^{(2)})v,u}(e^{x_1H_1+tH_2})
        \end{equation*}
        for some non-zero $C_\psi\in\BC$, only depending on $\psi$ and $F$. By rescaling $\tilde{D}_{r,i}^{(2)}$, we rewrite the above equation as
        \begin{equation}\label{Eq: 190}
           W_{\pi(X_r^{(2)})\pi(D_{r,i}^{(2)})v,u}(e^{x_1H_1+tH_2}) = e^{-k_2t}W_{\pi(\tilde{D}_{r,i}^{(2)})v,u}(e^{x_1H_1+tH_2}).
        \end{equation}
        If $l_3>0$, we define $\tilde{D}_{r,i}^{(2)}=0$. Then it is easy to check that
        \begin{equation*}
           \begin{aligned}
           W_{\pi(X_r^{(2)})\pi(D_{r,i}^{(2)})v,u}(e^{x_1H_1+tH_2}) = 0.
           \end{aligned}
        \end{equation*}
        Hence (\ref{Eq: 190}) also holds in this case.

        The solution to the differential equation (\ref{Eq: Whittaker differential eq}) is
        \begin{equation}\label{Eq: 09}
           \vec{F}_u(x_1,x_2,v) = e^{x_2B^{(2)}_u}\vec{F}_u(x_1,0,v)+\int_0^{x_2}e^{(x_2-t)B^{(2)}_u}\vec{G}_u(x_1,t,v)dt
        \end{equation}

        We compare the first coordinates of both sides in (\ref{Eq: 09}). The first coordinate on the LHS of (\ref{Eq: 09}) is $W_v(e^{x_1H_1+x_2H_2})$. Since each entry of $B^{(2)}_u$ is rational in $u$, each entry of the matrix $e^{x_2B^{(2)}_u}$ is a linear combination
        \begin{equation*}
             e^{x_2\xi_u}p_{u}(x_2),
        \end{equation*}
        where $p_u(x_2)$ belongs to a finite subset of $\BC(u)[x_2]$, and $\xi_u$ lies in a finite subset of $\bigcup_{i=1}^{k_2}E_{i,u}^{(2)}$.
        Thus, the first coordinate of the first term on the right hand side of (\ref{Eq: 09}) is a finite linear combination of
        \begin{equation}\label{Eq: First Asymp Exp 1st term}
           e^{x_2\xi_u}p_{u}(x_2)W_{\pi(e_i)}(e^{x_1H_1}),
        \end{equation}
        where $\xi_u$ belongs to a finite subset of $\bigcup_{i=1}^{k_2}E_{i,u}^{(2)}$ consisting of polynomials of degree 1, and $p_{u}(t)$ belongs to a finite subset of $\BC(u)[t]$.

        The first coordinate of the second term on the RHS of (\ref{Eq: 09}) is a finite linear combination of
        \begin{equation*}
           \int_0^{x_2} e^{(x_2-t)\xi_u} p_{u}(x_2-t)\cdot e^{-k_2t}W_{\pi(\tilde{D}_{r,i}^{(2)})v,u}(e^{x_1H_1+tH_2})dt,
        \end{equation*}
        where $\xi_u$, $p_u(t)$ satisfy the same conditions as above. After expanding the polynomials $p_{u}(x_2-t)$ into monomials, we can see that the first coordinate of the second term on the RHS of (\ref{Eq: 09}) is a finite linear combination of
        \begin{equation}\label{Eq: First Asymp Exp 2nd term}
           e^{x_2\xi_u}h_u(x_2)\int_0^{x_2}e^{(-\xi_u-k_2)t}t^lW_{\pi(D)v,u}(e^{x_1H_1+tH_2})dt,
        \end{equation}
        where $h_u(t)$ belongs to a finite subset of $\BC(u)[t]$, $\xi_u$ lies in a finite subset of $\bigcup_{i=1}^{k_2}E_{i,u}^{(2)}$, $l$ runs in a finite set $\mathcal{L}$ of nonnegative integers, and $D$ belongs to a finite set $\{\tilde{D}_{r,i}^{(2)}\}$. We also note that $\mathcal{L}$ only depends on the size of $B_u^{(2)}$, which is independent on the parameter $u$.

        Since the continuous function $-\Re(\xi_u)$ is bounded on the compact set $\Omega$, we may assume that $-\Re(\xi_u)<C$ for some constant $C$. Then the integral in (\ref{Eq: First Asymp Exp 2nd term}) admits the following estimate
        \begin{equation}\label{Eq: Estimate of First Asymp Exp 2nd term}
           \begin{aligned}
           &\big\lvert\int_0^{x_2}e^{(-\xi_u-k_2)t}t^lW_{\pi(D)v}(e^{x_1H_1+tH_2})dt\big\rvert\\
           \leq &\int_0^{x_2}e^{(-\Re(\xi_u)-k_2)t}t^l\norm{e^{x_1H_1+tH_2}}^{\mu}dt\cdot q(\pi(D)v)\\
           \leq &\int_0^{x_2}e^{(C-k_2)t}t^l\norm{e^{x_1H_1}}^\mu (2e^{2t}+2e^{-2t}+2)^{\mu}dt\cdot q(\pi(D)v)\\
           \leq &6^\mu \int_0^{x_2}e^{(C-k_2+2\mu)t}t^l\norm{e^{x_1H_1}}^\mu dt\cdot q(\pi(D)v)\\
           =& \int_0^{x_2}e^{(C-k_2+2\mu)t}t^ldt\cdot\norm{e^{x_1H_1}}^\mu \cdot q'(v)\\
           \end{aligned}
        \end{equation}
        for some continuous seminorm $q'$. Thus as a function of $x_1,x_2,u,v$, the integral
        \begin{equation*}
        \int_0^{x_2}e^{(-\xi_u-k_2)t}t^lW_{\pi(D)v,u}(e^{x_1H_1+tH_2})dt
        \end{equation*}
        is holomorphic in $u\in\Omega$ and continuous in $x_1,x_2,v$ (also uniformly continuous when $u$ runs in $\Omega$).

        If $u\in \Omega_{\xi_u,1}$, we can use a similar estimate and get
        \begin{equation}\label{Eq: 114}
           \begin{aligned}
           &\big\lvert\int_0^{x_2}e^{(-\xi_u-k_2)t}t^lW_{\pi(D)v}(e^{x_1H_1+tH_2})dt\big\rvert\\
           \leq &6^\mu \int_0^{x_2}e^{(-\Re(\xi_u)-k_2+2\mu)t}t^l\norm{e^{x_1H_1}}^\mu dt\cdot q(\pi(D)v)\\
           =& \int_0^{x_2}e^{(-\Re(\xi_u)-k_2+2\mu+\frac{1}{2})t}\cdot e^{-\frac{1}{2}t}t^ldt\cdot\norm{e^{x_1H_1}}^\mu \cdot q'(v)\\
           \leq &\int_0^{x_2}e^{(-\Re(\xi_u)-k_2+2\mu+\frac{1}{2})x_2}\cdot t^ldt\cdot\norm{e^{x_1H_1}}^\mu \cdot q'(v)\\
           = & e^{(-\Re(\xi_u)-k_2+2\mu+\frac{1}{2})x_2}\cdot \tilde{\delta}_{l}(x_2)\cdot\norm{e^{x_1H_1}}^\mu \cdot q'(v)\\
           \leq & e^{(\Re(\xi_2)-\Re(\xi_u))x_2}\cdot \tilde{\delta}_{l}(x_2)\cdot\norm{e^{x_1H_1}}^\mu \cdot q'(v)
           \end{aligned}
        \end{equation}
        for some continuous seminorm $q'$ and $\tilde{\delta}_l(t) = \frac{t^{l+1}}{l+1}$.

        We now claim that the integral
        \begin{equation}\label{Eq: First Asymp Exp 3rd term}
           \int_0^{+\infty}e^{(-\xi_u-k_2)t}t^lW_{\pi(D)v}(e^{x_1H_1+tH_2})dt
        \end{equation}
        converges absolutely when $u\in \Omega_{\xi_u,2}$. Indeed, by imitating the estimate (\ref{Eq: 114}), we can show that the integrand in (\ref{Eq: First Asymp Exp 3rd term}) is dominated by
        \begin{equation*}
            e^{(-\Re(\xi_u)-k_2)t}t^l\cdot\norm{e^{x_1H_1}}\cdot 6^\mu e^{2\mu t}q_1(v)
        \end{equation*}
        for some continuous seminorm $q_1$. Thus,
        \begin{equation}\label{Eq: 115}
           \begin{aligned}
           \big\lvert \int_0^{+\infty}e^{(-\xi_u-k_2)t}t^lW_{\pi(D)v}(e^{x_1H_1+tH_2})dt \big\rvert &\leq \int_{0}^{+\infty}e^{(-\Re(\xi_u)-k_2+2\mu)t}t^ldt\cdot\norm{e^{x_1H_1}}^\mu\cdot q'(v)\\
           &\leq \delta_l(x_2)\cdot\norm{e^{x_1H_1}}^\mu\cdot q'(v),
           \end{aligned}
        \end{equation}
        where $q'$ is a continuous seminorm and $\delta_l(t)$ is the constant function
        \begin{equation*}
             \int_{0}^{+\infty}e^{-\frac{1}{3}t}t^ldt.
        \end{equation*}
        Hence our claim follows. Moreover, the estimate (\ref{Eq: 115}) also shows that as a function of $x_1, u, v$, the integral (\ref{Eq: First Asymp Exp 3rd term}) is holomorphic in $u$ and uniformly continuous in $x_1, v$ when $u$ runs in $\Omega_{\xi_u,2}$.

        For $u\in \Omega_{\xi_u,2}$, now we can write
        \begin{equation*}
            \begin{aligned}
            &\int_0^{x_2}e^{(-\xi_u-k_2)t}t^lW_{\pi(D)v}(e^{x_1H_1+tH_2})dt \\ =&\int_0^{+\infty}e^{(-\xi_u-k_2)t}t^lW_{\pi(D)v}(e^{x_1H_1+tH_2})dt-\int_{x_2}^{+\infty}e^{(-\xi_u-k_2)t}t^lW_{\pi(D)v}(e^{x_1H_1+tH_2})dt.
            \end{aligned}
        \end{equation*}
        By (\ref{Eq: 115}), each summand on the RHS of the above equation converges absolutely and defines a holomorphic function in $u$ which is also uniformly continuous in $x_1,x_2,v$ when $u$ runs in $\Omega_{\xi_u,2}$. By using the same trick, we can check that the second summand above admits the following estimate
        \begin{equation}\label{Eq: Estimate of First Asymp Exp 4th term}
           \begin{aligned}
           \big\lvert \int_{x_2}^{+\infty}e^{(-\xi_u-k_2)t}t^lW_{\pi(D)v}(e^{x_1H_1+tH_2})dt\big\lvert
           \leq &\int_{x_2}^{+\infty}e^{(-\Re(\xi_u)-k_2+2\mu)t}t^ldt\cdot\norm{e^{x_1H_1}}^\mu\cdot q'(v)\\
           \leq &e^{(-\Re(\xi_u)-k_2+2\mu)x_2}\tilde{\delta}_l(x_2)\cdot\norm{e^{x_1H_1}}^\mu\cdot q'(v)
           \end{aligned}
        \end{equation}
        where $\tilde{\delta}_l(t)$ is a polynomial of $t$ of degree $l$ coming from integration by parts. Thus,
        \begin{equation}\label{Eq: Estimate of First Asymp Exp 4th term 2}
           \begin{aligned}
           \big\lvert e^{x_2\xi_u}\int_{x_2}^{+\infty}e^{(-\xi_u-k_2)t}t^lW_{\pi(D)v}(e^{x_1H_1+tH_2})dt\big\lvert
           \leq &e^{(-k_2+2\mu)x_2}\tilde{\delta}_l(x_2)\cdot\norm{e^{x_1H_1}}^\mu\cdot q'(v)\\
           \leq &e^{\Re(\xi_2)x_2}\tilde{\delta}_l(x_2)\cdot\norm{e^{x_1H_1}}^\mu\cdot q'(v)
           \end{aligned}
        \end{equation}
        \end{proof}

        We can also fix $\xi_1$ and prove a similar theorem by switching the role of $x_1$ and $x_2$. We only state the result. The proof is exactly the same as that of Theorem \ref{Theorem: Asymptotic Expansion second variable}.
        \begin{thm}\label{Theorem: Asymptotic Expansion first variable}
       Fix $\xi_1\in \BC$. Suppose $V_0 = V_{0,u}$, where $u$ runs in a fixed closed ball $\Omega = B(u_0,r_0)\in \BC^3$ ($u_0$ is the center) with a radius $r_0>0$.  Let $k_1$ be a sufficiently large positive integers such that
       \begin{equation*}
             -\mathrm{Re}(\xi_1)-k_1+2\mu<-1.
       \end{equation*}
       There is a finite set $C_u^{(1)}\sbst\bigcup_{j=1}^{k_1}E_{j,u}^{(2)}$, and a finite set $\mathcal{L}$ of nonnegative integers, a finite subset $\mathcal{P}_u\in \BC(u)[t]$ and a finite subset $\mathcal{D}\in U(\frak{gl}_3(\BC))$ such that for all $v\in V_{0,u}$, $x_1 \geq 0$, $W_{v,u}(e^{x_1H_1+x_2H_2})$ is a linear combination of terms of the following types:
       \begin{enumerate}
         \item  $e^{x_1\eta_u}p_u(x_1)W_{\pi(e)v,u}(e^{x_2H_2}),$
         \item  $e^{x_1\eta_u}h_u(x_1)\int_{0}^{x_1}e^{(-\eta_u-k_1)r}r^lW_{\pi(D)v,u}(e^{x_2H_2+rH_1})dr := h_u(x_1)\tilde{\phi}_{l,e,D}(x_1,x_2,v,u),$
         \item  $e^{x_1\eta_u}h_u(x_1)\int_0^{+\infty}e^{(-\eta_u-k_1)r}r^lW_{\pi(D)v,u}(e^{x_2H_2+rH_1})dr := e^{x_1\eta_u}h_u(x_1)\phi_{l,e,D}(x_1,x_2,v,u) ,$
         \item  $e^{x_1\eta_u}h_u(x_1)\int_{x_1}^{+\infty}e^{(-\eta_u-k_1)r}r^lW_{\pi(D)v,u}(e^{rH_1+x_2H_2})dr := h_u(x_1)\tilde{\phi}_{l,e,D}(x_1,x_2,v,u),$
       \end{enumerate}
       where $p_u, h_u\in\mathcal{P}_u$, $e,D \in \mathcal{D}$, and $\eta_u\in C_u^{(1)}$ are polynomials of $u$ of degree 1. For each $\eta_u\in C_u^{(1)}$, we decompose $\Omega = \Omega_{\eta_u,1}\bigcup\Omega_{\eta_u, 2}$ where
       \begin{equation*}
            \begin{aligned}
               \Omega_{\eta_u,1} := \set{u\in \Omega}{-k_1-\Re(\eta_u)+2\mu \geq -\frac{1}{2}};\\
               \Omega_{\eta_u,2} := \set{u\in \Omega}{-k_1-\Re(\eta_u)+2\mu \leq -\frac{1}{3}}.\\
            \end{aligned}
       \end{equation*}

       If $u\in \Omega_{\eta_u,1}$, then we only need terms of type (1) and type (2). The function $\phi_{l,e,D}(x_1,x_2,v,u)$ defined in type (2) is holomorphic in $u$ and uniformly continuous in $x_1,x_2,v$ when $u$ runs in $\Omega_{\eta_u,1}$. It satisfies the following estimate
       \begin{equation}\label{Eq: Estimate of First Asymp Exp 2nd term 2}
           \begin{aligned}
           &\big\lvert\tilde{\phi}_{l,e,D}(x_1,x_2,v,u)\big\rvert \leq e^{\Re(\xi_1)x_1}\cdot \tilde{\delta}_{l}(x_1)\cdot\norm{e^{x_2H_2}}^\mu \cdot q'(v)
           \end{aligned}
        \end{equation}
       where $q'$ is a continuous seminorm and $\tilde{\delta}_l(t) = \frac{t^{l+1}}{l+1}$.

       If $u\in \Omega_{\eta_u,2}$, we only need terms of type (1), (3) and (4). The functions\\ $\phi_{l,e,D}(x_1,x_2,v,u)$ and $\tilde{\phi}_{l,e,D}(x_1,x_2,v,u)$ are also holomorphic in $u\in \Omega_{\eta_u,2}$ and uniformly continuous in $x_1,x_2,v$ when $u$ runs in $\Omega_{\eta_u,2}$. They satisfy the following estimates:
       \begin{equation}\label{Eq: Estimate of First Asymp Exp 3rd term 2}
           \big\lvert \phi_{l,e,D}(x_1,x_2,v,u)\big\rvert \leq \delta_l(x_2)\cdot\norm{e^{x_1H_1}}^\mu\cdot q'(v);
       \end{equation}
       \begin{equation}\label{Eq: Estimate of First Asymp Exp 4th term 2}
           \big\lvert \tilde{\phi}_{l,e,D}(x_1,x_2,v,u)\big\lvert
           \leq e^{\Re(\xi_2)x_2}\tilde{\delta}_l(x_2)\cdot\norm{e^{x_1H_1}}^\mu\cdot q'(v);
       \end{equation}
       where $q'$ is a continuous seminorm, $\delta_l(t)$ is the constant function
       \begin{equation*}
            \delta_l(t) = \int_{0}^{+\infty}e^{-\frac{1}{3}t}t^ldt
       \end{equation*}
       and $\tilde{\delta}_l(t)$ is a polynomial of $t$ of degree $l$ with constant coefficients coming from integration by parts.

       Same expansions and estimates also hold if we drop the subscript $u$ and consider the underlying $(\frak{gl}_3,K_{GL_3})$-module for all irreducible generic Casselman-Wallach representations.
    \end{thm}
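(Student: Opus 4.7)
The plan is to repeat the proof of Theorem~\ref{Theorem: Asymptotic Expansion second variable} with the roles of $H_1$ and $H_2$ (and of $x_1$ and $x_2$) interchanged throughout. First, I apply Lemma~\ref{Lemma: Wallach 15.2.4} with $m=1$ and the integer $k_1$ to produce $N(1)$, the finite families $\{e_i^{(1)}\}_{i=1}^{N(1)}$ and $\{D_{s,i}^{(1)}\}$ in $U(\fgl_3(\BC))$, and the matrix $B_u^{(1)}$ whose entries are rational in $u$ and whose generalized eigenvalues lie in $E_{k_1, u}^{(1)}$. I then introduce the column vectors
\begin{equation*}
    \vec{F}_u(x_2, r, v) = \bigl(W_{\pi(e_i^{(1)})v, u}(e^{rH_1 + x_2 H_2})\bigr)_{i=1}^{N(1)}, \qquad \vec{G}_u(x_2, r, v) = \Bigl(\sum_s W_{\pi(X_s^{(1)})\pi(D_{s,i}^{(1)})v, u}(e^{rH_1 + x_2 H_2})\Bigr)_{i=1}^{N(1)},
\end{equation*}
and differentiate along the $H_1$-direction to obtain, via \eqref{Eq: Equation from Wallach 15.2.4}, the linear ODE $\tfrac{d}{dr}\vec{F}_u(x_2, r, v) = B_u^{(1)}\vec{F}_u(x_2, r, v) + \vec{G}_u(x_2, r, v)$.

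The key structural fact that makes the argument symmetric to the $m=2$ case is an analogue of \eqref{Eq: 190}: a basis of monomials in $\Lie(U_1)^{k_1}$ takes the form $X_s^{(1)} = X_3^{l_3} X_1^{l_1}$ with $l_3 + l_1 = k_1$, because $\Lie(U_1)$ is spanned by $X_1$ (the simple-root vector, on which $d\psi_N$ is non-zero) and $X_3 = [X_1, X_2]$ (the non-simple positive-root vector, on which $d\psi_N$ vanishes). For $l_3 > 0$ the Whittaker function $W_{\pi(X_s^{(1)})\pi(D_{s,i}^{(1)})v, u}(e^{rH_1 + x_2 H_2})$ vanishes identically, while for $l_3 = 0$ repeated application of $\Ad(e^{rH_1})(X_1) = e^{-r}X_1$ yields, after absorbing the constant $(d\psi_N(X_1))^{k_1}$ into a rescaled $\tilde{D}_{s,i}^{(1)}$, the identity
\begin{equation*}
    W_{\pi(X_s^{(1)})\pi(D_{s,i}^{(1)})v, u}(e^{rH_1 + x_2 H_2}) = e^{-k_1 r}\, W_{\pi(\tilde{D}_{s,i}^{(1)})v, u}(e^{rH_1 + x_2 H_2}).
\end{equation*}

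With these inputs the rest is mechanical. Variation of constants gives $\vec{F}_u(x_2, x_1, v) = e^{x_1 B_u^{(1)}}\vec{F}_u(x_2, 0, v) + \int_0^{x_1} e^{(x_1 - r)B_u^{(1)}}\vec{G}_u(x_2, r, v)\,dr$; reading off the first coordinate, expanding each entry of $e^{x_1 B_u^{(1)}}$ as a finite sum of terms $e^{x_1 \eta_u} p_u(x_1)$ with $\eta_u \in C_u^{(1)} \subseteq \bigcup_{j=1}^{k_1} E_{j,u}^{(1)}$ polynomial of degree $1$ in $u$ and $p_u \in \BC(u)[x_1]$, substituting the identity above, and expanding $p_u(x_1 - r)$ as a polynomial in $r$, produces precisely the four term types (1)--(4) in the statement. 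The dichotomy $\Omega = \Omega_{\eta_u, 1} \cup \Omega_{\eta_u, 2}$ and the estimates then follow verbatim from the Cauchy-type bounds in the proof of Theorem~\ref{Theorem: Asymptotic Expansion second variable} after interchanging $(x_1, H_1)$ with $(x_2, H_2)$; the version without subscript $u$ is identical with $u$ suppressed. There is no substantive obstacle beyond notational bookkeeping, since $U_1$ and $U_2$ play perfectly symmetric roles in $\GL_3$: each unipotent radical contains exactly one simple-root vector (producing the $e^{-k_1 r}$ factor) and one non-simple positive-root vector (on which the character vanishes).
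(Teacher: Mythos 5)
Your proposal is correct and is exactly what the paper intends: the paper dismisses this theorem with the sentence ``The proof is exactly the same as that of Theorem~\ref{Theorem: Asymptotic Expansion second variable},'' and you correctly carry out that interchange, including identifying the key structural symmetry that $\Lie(U_1)=\mathrm{span}(X_1,X_3)$ plays the same role as $\Lie(U_2)=\mathrm{span}(X_2,X_3)$ — one basis vector is a simple root vector (on which $d\psi_N$ is nonzero, giving $\alpha_1(H_1)=-1$ and hence the $e^{-k_1r}$ factor) and the other is $X_3$ (on which $d\psi_N$ vanishes, killing the mixed monomials). Your implicit $x_1\leftrightarrow x_2$, $\xi_1\leftrightarrow\xi_2$ swap in the final estimates is also the right reading, since the paper's printed estimates \eqref{Eq: Estimate of First Asymp Exp 3rd term 2} and \eqref{Eq: Estimate of First Asymp Exp 4th term 2} (and the set $\bigcup_{j=1}^{k_1}E_{j,u}^{(2)}$, which should be $E_{j,u}^{(1)}$) appear to have been copied without the swap from Theorem~\ref{Theorem: Asymptotic Expansion second variable}.
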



    \subsection{Second Step}

    From Theorem \ref{Theorem: Asymptotic Expansion GL_3}, we can see that certain summand in the asymptotic expansion will only appear for some certain $u$ (not all $u\in \Omega$). Hence to shorten the statement of our theorem, we say
    \begin{defn}
       A summand $S$ in the asymptotic expansion of the Whittaker function is said to be holomorphic in $u\in \Omega$ and uniformly continuous with respect to the other variables when $u$ runs in $\Omega$, if $S$ appears when $u$ lies inside a subset $\Omega_0\sbst \Omega$, and it is holomorphic in $u\in \Omega_0$ and uniformly continuous with respect to the other variables when $u$ runs in $\Omega_0$.
    \end{defn}
    The following is the main theorem of this Section.
    \begin{thm}\label{Theorem: Asymptotic Expansion GL_3}
       Fix $\xi_1$, $\xi_2$. Suppose $V_0 = V_{0,u}$ where $u$ runs in a fixed closed ball $\Omega = B(u_0,r_0)\in \BC^3$ ($u_0$ is the center) with a radius $r_0>0$. Let $k_1,k_2$ be two sufficiently large positive integers such that
       \begin{equation*}
          \begin{aligned}
             -\Re(\xi_1)-k_1+2\mu<-1,\\
             -\Re(\xi_2)-k_2+2\mu<-1.
          \end{aligned}
       \end{equation*}
       Then there are finite sets $C_u^{(1)}\sbst \bigcup_{j=1}^{k_1} E_j^{(1)}$ and $C_u^{(2)}\sbst\bigcup_{j=1}^{k_2}E_j^{(2)}$, and a finite set $\mathcal{P}_u\in \BC(u)[t_1,t_2]$, a finite subset $\mathcal{D}\in U(\frak{gl}_3(\BC))$ and a finite subset $\mathcal{L}$ of nonnegative integers such that for all $v\in V_{0,u}$, $x_1,x_2\geq 0$, $W_v(e^{x_1H_1+x_2H_2})$ is a linear combination of terms of the following types:
       \begin{enumerate}
         \item $e^{x_1\eta_u+x_2\xi_u}P_u(x_1,x_2)f_0(v,u),$
         \item $e^{x_1\eta_u}P_u(x_1,x_2)f_2(x_2,v,u),$
         \item $e^{x_2\xi_u}P_u(x_1,x_2)f_1(x_1,v,u),$
         \item $P_u(x_1,x_2)f_3(x_1,x_2,v,u),$
       \end{enumerate}
       where $\eta_u\in C_u^{(1)}$, $\xi_u\in C_u^{(2)}$ are polynomials of $u$ of degree 1, $P\in\mathcal{P}_u$, $f_0,f_1,f_2,f_3$ are holomorphic in $u$ and uniformly continuous in $x_1,x_2,v$ when $u$ runs in $\Omega$. They satisfy the following estimates

         \begin{enumerate}\label{Eq: Estimates of f in Asymptotic Expansions}
           \item  $\abs{f_0(v,u)}\leq q'(v)$,\\
           \item $\abs{f_1(x_1,v,u)}\leq e^{\text{Re}(\xi_1)\cdot x_1}h_1(x_1)q'(v)$,\\
           \item $\abs{f_2(x_2,v,u)}\leq e^{\text{Re}(\xi_2)\cdot x_2}h_2(x_2)q'(v)$,\\
            \item $\abs{f_3(x_1,x_2,v,u)}\leq e^{\text{Re}(\xi_1)\cdot x_1+\text{Re}(\xi_2)\cdot x_2}h_3(x_1,x_2)q'(v)$.\\
         \end{enumerate}

       Here in the above, $q'$ is a continuous seminorm on $V_\pi$ and $h_1,h_2,h_3$ are polynomials with complex coefficients. Same statements also hold when we drop the subscript $u$ and consider the underlying $(\frak{gl}_3,K_{\GL_3})$-module for all irreducible generic Casselman-Wallach representations.
    \end{thm}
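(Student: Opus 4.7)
The idea is to apply Theorem \ref{Theorem: Asymptotic Expansion second variable} and Theorem \ref{Theorem: Asymptotic Expansion first variable} in succession. As with the previous two theorems, we treat the principal series case $\pi=\pi_u$ with $u\in\Omega$; the general Casselman--Wallach case follows by the same argument with the subscript $u$ dropped.

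\textbf{Step 1 (expand in $x_2$).} Apply Theorem \ref{Theorem: Asymptotic Expansion second variable} with the chosen $\xi_2$ and the integer $k_2$. This writes $W_{v,u}(e^{x_1H_1+x_2H_2})$ as a finite linear combination of four kinds of terms: a ``boundary'' term $e^{x_2\xi_u}p_u(x_2)W_{\pi(e)v,u}(e^{x_1H_1})$ of type (1), plus three kinds of integral terms of types (2)--(4) in which the $x_1$-dependence sits inside the integrand as $W_{\pi(D)v,u}(e^{x_1H_1+tH_2})$.

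\textbf{Step 2 (expand in $x_1$).} To each of the Whittaker factors produced in Step 1, apply Theorem \ref{Theorem: Asymptotic Expansion first variable} with the chosen $\xi_1$ and the integer $k_1$. For the type (1) output of Step 1, this is direct and produces, after multiplication by $e^{x_2\xi_u}p_u(x_2)$, terms of the shape
\begin{equation*}
   e^{x_1\eta_u+x_2\xi_u}P_u(x_1,x_2)f_0(v,u)
   \quad\text{and}\quad
   e^{x_2\xi_u}P_u(x_1,x_2)f_1(x_1,v,u),
\end{equation*}
i.e.\ types (1) and (3) of the present theorem. For the integral outputs of Step 1, the same theorem is applied \emph{inside the integrand} to $W_{\pi(D)v,u}(e^{x_1H_1+tH_2})$, viewing $t$ as the second torus coordinate. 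The resulting asymptotic expansion in $x_1$ is uniform in $t$ by the uniform continuity statement of Theorem \ref{Theorem: Asymptotic Expansion first variable}, so we may interchange the $t$-integration with the finite sum.

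\textbf{Step 3 (regroup and estimate).} After Step 2 one has an expansion of $W_{v,u}(e^{x_1H_1+x_2H_2})$ as a finite sum where every summand carries an exponential prefactor of the form $e^{x_1\eta_u}$, $e^{x_2\xi_u}$, both, or neither, multiplied by a polynomial $P_u(x_1,x_2)$ and by one of the residual functions $f_i$ coming from the boundary term or the $t$-integral. Define $f_0,f_1,f_2,f_3$ by grouping the residual factors according to which exponentials have been pulled out: $f_0$ when both are present, $f_2$ when only $e^{x_1\eta_u}$ remains, $f_1$ when only $e^{x_2\xi_u}$ remains, and $f_3$ when neither does. Their holomorphy in $u$ and their uniform continuity in $(x_1,x_2,v)$ on $\Omega$ is inherited from Theorems \ref{Theorem: Asymptotic Expansion second variable}--\ref{Theorem: Asymptotic Expansion first variable}. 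The required estimates (1)--(4) come from multiplying the bounds \eqref{Eq: Estimate of First Asymp Exp 2nd term 1}--\eqref{Eq: Estimate of First Asymp Exp 4th term 1} for $\tilde{\phi}_{l,e,D}$ and $\phi_{l,e,D}$ from Theorem \ref{Theorem: Asymptotic Expansion second variable} by the analogous bounds \eqref{Eq: Estimate of First Asymp Exp 2nd term 2}--\eqref{Eq: Estimate of First Asymp Exp 4th term 2} from Theorem \ref{Theorem: Asymptotic Expansion first variable}, together with the seminorm bound \eqref{Eq: 07}; any factor of $\norm{e^{x_jH_j}}^\mu$ that survives is dominated, on the region $x_j\geq 0$, by a polynomial in $x_j$ times the relevant exponential $e^{\Re(\xi_j)x_j}$ by our choice of $k_j$.

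\textbf{Expected main obstacle.} The analytic content is already contained in Theorems \ref{Theorem: Asymptotic Expansion second variable}--\ref{Theorem: Asymptotic Expansion first variable}; the real work is bookkeeping. In particular, to apply Theorem \ref{Theorem: Asymptotic Expansion first variable} \emph{inside} a $t$-integral from Theorem \ref{Theorem: Asymptotic Expansion second variable}, one must make sure the decomposition $\Omega=\Omega_{\eta_u,1}\cup\Omega_{\eta_u,2}$ and the exponent conditions on $k_1$ are compatible with the exponential weight $e^{(-\xi_u-k_2)t}$ already present, so that after expanding and interchanging, the new $t$-integrals still converge and define functions of $(x_1,x_2,v,u)$ holomorphic in $u$ and uniformly continuous in the remaining variables. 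This is the step where care is needed; once it is set up, the grouping of the resulting terms into the four families (1)--(4) and the verification of the estimates is routine.
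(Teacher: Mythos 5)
Your approach matches the paper's: expand in $x_2$ via Theorem \ref{Theorem: Asymptotic Expansion second variable}, then expand each surviving Whittaker factor (including those inside the $t$-integral) in $x_1$ via Theorem \ref{Theorem: Asymptotic Expansion first variable}, arriving at a double integral $\int_0^{x_2}\int_0^{x_1}$ that is then split according to the four-way decomposition of $\Omega$ by the sign conditions on $\xi_u$ and $\eta_u$; you correctly identify the compatibility of this decomposition with the weight $e^{(-\xi_u-k_2)t}$ as the delicate point, which the paper handles by the case analysis around \eqref{Eq: 117}--\eqref{Eq: 118}. One sentence in your Step 3 is misstated: the raw bound $\norm{e^{x_jH_j}}^\mu$ grows like $e^{2\mu x_j}$ and is not dominated by a polynomial times $e^{\Re(\xi_j)x_j}$ when $\Re(\xi_j)$ is very negative; rather, that factor disappears precisely because the $x_1$-expansion is applied to the actual Whittaker function, and the bound $e^{\Re(\xi_1)x_1}h_1(x_1)$ comes from the expansion coefficients $\tilde\phi_{l,e,D}$ themselves, not from dominating the a priori estimate \eqref{Eq: 07}.
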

    \begin{proof}
         For each $k_m (m=1,2)$, we choose $N(m)\in\BN$, $\{e_i^{(m)}\}_{i=1}^{N(m)}$, $\{D_{r,i}^{(m)}\}$, $B_u^{(m)}$ as in Lemma \ref{Lemma: Wallach 15.2.4},
        So far, we have proved that there exist finite sets $\mathcal{D}, \mathcal{P}_u, \mathcal{L},  C_u^{(2)}$ such that $W_{v,u}(e^{x_1H_1+x_2H_2})$ is a finite linear combination of
        \begin{equation*}
           e^{x_2\xi_u}p_{u}(x_2)W_{\pi(e)v,u}(e^{x_1H_1})
        \end{equation*}
        and
        \begin{equation*}
           e^{x_2\xi_u}h_u(x_2)\int_0^{x_2}e^{(-\xi_u-k_2)t}t^lW_{\pi(D)v,u}(e^{x_1H_1+tH_2})dt.
        \end{equation*}

        Because $x_1\geq 0$, by Theorem \ref{Theorem: Asymptotic Expansion first variable}, we can expand $W_{v,u}(e^{x_1H_1+tH_2})$ with respect to $x_1$. To save notations, we combine the finite set of non-negative integers, the finite set of the elements in the Lie algebra etc. Then by Theorem \ref{Theorem: Asymptotic Expansion first variable}, $W_{v,u}(e^{x_1H_1+tH_2})$ is a finite linear combination of terms of the following types:
        \begin{enumerate}
         \item \begin{equation}\label{Eq: Asymp x_1 1st term}
                  e^{x_1\eta_u}r_{u}(x_1)W_{\pi(e)v,u}(e^{tH_2})
               \end{equation}
         \item \begin{equation}\label{Eq: Asymp x_1 2nd term}
                  e^{x_1\eta_u}r_u(x_1)\int_0^{x_1}e^{(-\eta_u-k_1)r}r^{l'}W_{\pi(D)v,u}(e^{rH_1+tH_2})dr
               \end{equation}
         \item \begin{equation}\label{Eq: Asymp x_1 3rd term}
                  e^{x_1\eta_u}r_u(x_1)\int_{0}^{+\infty}e^{(-\eta_u-k_1)r}r^{l'}W_{\pi(D)v,u}(e^{rH_1+tH_2})dr
               \end{equation}
         \item \begin{equation}\label{Eq: Asymp x_1 4th term}
                  e^{x_1\eta_u}r_u(x_1)\int_{x_1}^{\infty}e^{(-\eta_u-k_1)r}r^{l'}W_{\pi(D)v,u}(e^{rH_1+tH_2})dr
               \end{equation}
         \end{enumerate}
         where $\eta_u\in C_u^{(1)}$, $r_u$ belongs to a finite subset $Q_u$ of $\BC(u)[t]$, $l'\in \mathcal{L}$, $D,e\in \mathcal{D}$.
         In particular, if we set $t=0$, we get an expansion of $W_{v,u}(e^{x_1H_1})$. Each term appears if $u$ satisfying the corresponding conditions in Theorem \ref{Theorem: Asymptotic Expansion first variable}. Thus
         \begin{equation*}
           e^{x_2\xi_u}p_{u}(x_2)W_{\pi(e)v,u}(e^{x_1H_1})
        \end{equation*}
         is a finite linear combination of
         \begin{equation*}
           e^{x_2\xi_u+x_1\eta_u}p_u(x_2)r_u(x_1)\phi_{l,e,D}(x_1,u,v)
         \end{equation*}
         and
         \begin{equation*}
           e^{x_2\xi_u}p_u(x_2)r_u(x_1)\tilde{\phi}_{l,e,D}(x_1,u,v),
         \end{equation*}
         where $\phi_{l,e,D}(x_1,u,v), \tilde{\phi}_{l,e,D}(u,v)$ are holomorphic in $u$ and uniformly continuous in $x_1,x_2,v$ when $u$ runs in $\Omega$. By the estimates in Theorem \ref{Theorem: Asymptotic Expansion first variable},
         \begin{equation*}
               \abs{\tilde{\phi}_{l,e,D}(x_1,u,v)}\leq e^{\Re(\xi_1)x_1}h_1(x_1)q'(v)
         \end{equation*}
         and
         \begin{equation*}
               \abs{\phi_{l,e,D}(u,v)}\leq q'(v)
         \end{equation*}
         for some polynomial $h_1$ with constant coefficients and continuous seminorm $q'$ ($h_1, q'$ may be different in different inequalities). Thus all summands in the expansion of \begin{equation*}
           e^{x_2\xi_u}p_{u}(x_2)W_{\pi(e)v,u}(e^{x_1H_1})
        \end{equation*}
        satisfy the properties in the theorem.

        As in the proof of Theorem \ref{Theorem: Asymptotic Expansion second variable}, for all $u\in \Omega$, $W_{v,u}(e^{x_1H_1+tH_2})$ is a finite linear combination of terms of the following types:
        \begin{enumerate}
         \item \begin{equation*}
                  e^{x_1\eta_u}r_{u}(x_1)W_{\pi(e)v,u}(e^{tH_2})
               \end{equation*}
         \item \begin{equation*}
                  e^{x_1\eta_u}r_u(x_1)\int_0^{x_1}e^{(-\eta_u-k_1)r}r^{l'}W_{\pi(D)v,u}(e^{rH_1+tH_2})dr.
               \end{equation*}
         \end{enumerate}
        Thus,
        \begin{equation*}
           e^{x_2\xi_u}h_u(x_2)\int_0^{x_2}e^{(-\xi_u-k_2)t}t^lW_{\pi(D)v,u}(e^{x_1H_1+tH_2})dt
        \end{equation*}
        is a finite linear combination of terms of follow types
        \begin{enumerate}
          \item
        \begin{equation}\label{Eq: 116}
               e^{x_1\eta_u+x_2\xi_u}r_{u}(x_1)h_u(x_2)\int_0^{x_2}e^{(-\xi_u-k_2)t}t^lW_{\pi(e)v,u}(e^{tH_2})dt;
        \end{equation}
        \item
        \begin{equation}\label{Eq: 117}
               e^{x_1\eta_u+x_2\xi_u}r_{u}(x_1)h_u(x_2)\int_0^{x_2}\int_0^{x_1} e^{(-\xi_u-k_2)t}t^l  e^{(-\eta_u-k_1)r}r^{l'} W_{\pi(D)v,u}(e^{rH_1+tH_2})dr dt.
        \end{equation}
        \end{enumerate}
        The summands in the expansion of (\ref{Eq: 116}) can be analyzed in the exact same way as in (\ref{Eq: First Asymp Exp 2nd term}). Thus those summands satisfy the properties in Theorem \ref{Theorem: Asymptotic Expansion GL_3}. It remains to deal with the integral (\ref{Eq: 117}). By using the same estimate method as in Theorem \ref{Theorem: Asymptotic Expansion second variable}, it is easy to see that the integral (\ref{Eq: 117}) defines a holomorphic function in $u\in \Omega$ which is uniformly continuous in $x_1,x_2,v$ when $u$ runs in $\Omega$. Let us define
        \begin{equation*}
            \begin{aligned}
               \Omega_{\xi_u,1} := \set{u\in \Omega}{-k_2-\Re(\xi_u)+2\mu \geq -\frac{1}{2}};\\
               \Omega_{\xi_u,2} := \set{u\in \Omega}{-k_2-\Re(\xi_u)+2\mu \leq -\frac{1}{3}};\\
               \Omega_{\eta_u,1}:= \set{u\in \Omega}{-k_1-\Re(\eta_u)+2\mu \geq -\frac{1}{2}};\\
               \Omega_{\eta_u,2} := \set{u\in \Omega}{-k_1-\Re(\eta_u)+2\mu \leq -\frac{1}{3}}.\\
            \end{aligned}
       \end{equation*}
       If $u\in\Omega_{\xi_u,1}$, we leave the $dt$-integral unchanged; if $u\in\Omega_{\xi_u,2}$, then we rewrite the $dt$-integral as $\int_{0}^{+\infty}-\int_{x_2}^{+\infty}$. We can do the similar operation for the $dr-$integral. Then we can imitate the method in Theorem \ref{Theorem: Asymptotic Expansion second variable} and finish the proof. Here we only deal one case, the others can be analyzed in the same way.

       Let us assume that $u\in \Omega_{\xi_u,1}\bigcap\Omega_{\eta_u,2}$, then (\ref{Eq: 117}) can be rewritten as
       \begin{equation}\label{Eq: 118}
          e^{x_1\eta_u+x_2\xi_u}r_{u}(x_1)h_u(x_2)\int_0^{x_2}\int_0^{+\infty} -   e^{x_1\eta_u+x_2\xi_u}r_{u}(x_1)h_u(x_2)\int_0^{x_2}\int_{x_1}^{+\infty}.
       \end{equation}
       Using a similar estimate as (\ref{Eq: 115}), we can show that the integral
       \begin{equation*}
          \int_0^{+\infty} e^{(-\eta_u-k_1)r}r^{l'} W_{\pi(D)v,u}(e^{rH_1+tH_2})dr
       \end{equation*}
      converges absolutely and is bounded by $\norm{e^{tH_2}}^\mu\cdot q'(v)$. Thus, both summands in (\ref{Eq: 118}) define a holomorphic function in $u$ which is uniformly continuous in the other variables when $u$ runs in $\Omega$. It suffices to prove that they satisfy the desired estimates. Indeed,
      \begin{equation*}
          \begin{aligned}
          &\big\lvert e^{x_2\xi_u}\int_0^{x_2}\int_0^{+\infty} e^{(-\xi_u-k_2)t}t^l  e^{(-\eta_u-k_1)r}r^{l'} W_{\pi(D)v,u}(e^{rH_1+tH_2})dr dt\big\lvert \\
          \leq &\big\lvert e^{x_2\xi_u}\cdot \int_0^{x_2} e^{(-\xi_u-k_2)t}t^l\cdot\norm{e^{tH_2}}^\mu q'(v)dt\big\lvert  .
          \end{aligned}
       \end{equation*}
      Thus, by the method in (\ref{Eq: 114}), the above is bounded by
      \begin{equation*}
          e^{\Re(\xi_2)x_2}h(x_2)q''(v)
      \end{equation*}
      for some polynomial $h$ with constant coefficients and a continuous seminorm $q''$. Therefore the first summand in (\ref{Eq: 118}) contributes a term of the form $e^{x_1\eta_u}p_u(x_1,x_2)f_2(x_2,v,u)$ in Theorem \ref{Theorem: Asymptotic Expansion GL_3}.

      By using a similar estimate as (\ref{Eq: Estimate of First Asymp Exp 4th term}), we can show that the integral
       \begin{equation*}
          \big\lvert e^{x_1\eta_u}\int_{x_1}^{+\infty} e^{(-\eta_u-k_1)r}r^{l'} W_{\pi(D)v,u}(e^{rH_1+tH_2})dr\big\lvert.
       \end{equation*}
      is bounded by $e^{\Re(\xi_1)x_1}h_1(x_1)q'(v)$ for some polynomial $h_1$ and continuous seminorm $q'$. Then using the method in (\ref{Eq: 114}), we can obtain that the integral
      \begin{equation*}
         \big\lvert e^{x_1\eta_u+x_2\xi_u}\int_0^{x_2}\int_{x_1}^{+\infty} e^{(-\xi_u-k_2)t}t^l  e^{(-\eta_u-k_1)r}r^{l'} W_{\pi(D)v,u}(e^{rH_1+tH_2})dr dt\big\lvert
      \end{equation*}
      is bounded by
      \begin{equation*}
          e^{\text{Re}(\xi_1)\cdot x_1+\text{Re}(\xi_2)\cdot x_2}h_3(x_1,x_2)q''(v)
      \end{equation*}
      for some continuous seminorm $q''$ and polynomial $h_3$ with complex coefficients. Thus the second summand in (\ref{Eq: 118}) contributes a term of the form $p_u(x_1,x_2)f_3(x_1, x_2,v,u)$ in Theorem \ref{Theorem: Asymptotic Expansion GL_3}.
    \end{proof}
\begin{rk}
   Since the space of $K$-finite vectors in $V_\pi$ is dense, by the continuity of Whittaker functional, the asymptotic expansion in Theorem \ref{Theorem: Asymptotic Expansion GL_3} also holds for all $v\in V_\pi$.
\end{rk}

\section{Meromorphic Continuation of the Local Integrals (Archimedean Case)} \label{section:  Mero}
    In this Section, we will use Theorem \ref{Theorem: Asymptotic Expansion GL_3} to prove Theorem \ref{Theorem: Meromorphic Continuation} by analysing the asymptotic behaviour of the integrand of $Z(W_v, f_s)$ near 0. Throughout this Section, whenever an integral parameterized by $s$ is a bilinear form on the projective tensor product space $V_\pi\hat{\otimes} V_{\rho_s}$, we say that it satisfies property $\mathcal{M}$, if
    \begin{enumerate}
       \item it converges absolutely when Re$(s)$ is sufficiently large,
       \item it extends to a meromorphic function of $s$ to the whole complex plane,
       \item its meromorphic continuation is a continuous on $V_{\pi}\hat{\otimes} V_{\rho_s}$.
    \end{enumerate}
    For some basic properties of projective tensor space, we refer to \cite{Tr}. Now we start from some reductions to get rid of the integral over the maximal compact subgroup.


    \subsection{Some Reductions}

    Let us first define
    \begin{equation*}
       \begin{aligned}
       B(W_v, f_s) &:= \int_F\int_{A_{\SL_3}} W_v(\mtrthr{1}{z}{}{}{1}{}{}{}{1}a)f_s(\gamma\cdot \mtrthr{1}{z}{}{}{1}{}{}{}{1}a)\delta_{B_{\SL_3}}^{-1}(a)dadz,
       \end{aligned}
    \end{equation*}
    where the subgroup $A_{\SL_3}$ is defined in \eqref{eq: positive torus}. Then the local integral $Z(W_v, f_s)$ defined in \eqref{Def: local archi integral} can be rewritten as
    \begin{equation*}
       Z(W_v, f_s) = \int_{K_{\SL_3}} B(\pi(k)W_v, \rho_s(k)f_s)dk.
    \end{equation*}
    \begin{lemma}
       If $B(W_v,f_s)$ satisfies property $\mathcal{M}$, then $Z(W_v,f_s)$ satisfies property $\mathcal{M}$. Moreover, if $\pi = \pi_u$ is a principal series as in \eqref{eq: principal series} and we assume that  $B(W_{v,u},f_s)$ is meromorphic in $u$, then $Z(W_{v,u},f_s)$ is also meromorphic in $u$.
    \end{lemma}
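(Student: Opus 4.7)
The plan is to transfer each of the three defining conditions of property $\mathcal{M}$ from $B$ to $Z$ via the compact averaging formula
\[
   Z(W_v, f_s) \;=\; \int_{K_{\SL_3}} B\bigl(\pi(k)W_v,\, \rho_s(k)f_s\bigr)\,dk.
\]
I will work in the compact picture of $\rho_s$, so that the underlying Fr\'echet space of $V_{\rho_s}$ is independent of $s$ and the $s$-dependence of $\rho_s(k)f_s$ is a smooth, $s$-holomorphic rescaling; the same comment applies jointly in $u$ when $\pi = \pi_u$ is a principal series. The crucial auxiliary input, used throughout, is that the orbit map $k \mapsto \pi(k)W_v \otimes \rho_s(k)f_s$ is continuous from the compact group $K_{\SL_3}$ into the Fr\'echet space $V_\pi \hat{\otimes} V_{\rho_s}$ with compact image.

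\textbf{Absolute convergence.} For $\Re(s)$ sufficiently large, $B(\cdot,\cdot)$ is by hypothesis a continuous bilinear form on $V_\pi \hat{\otimes} V_{\rho_s}$, so one may dominate $|B|$ by a product $q(\cdot)\cdot q'(\cdot)$ of continuous seminorms. Continuity of the representations $\pi$ and $\rho_s$ together with compactness of $K_{\SL_3}$ makes $q(\pi(k)W_v)$ and $q'(\rho_s(k)f_s)$ bounded in $k$, so the integral defining $Z(W_v,f_s)$ converges absolutely.

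\textbf{Meromorphic continuation and continuity.} View $B(\cdot,\cdot;s)$ as a meromorphic family (in $s$) of continuous bilinear forms on the fixed Fr\'echet space $V_\pi \hat{\otimes} V_{\rho_s}$. For $s$ outside the poles of $B$, the vector-valued Bochner integral
\[
   T(s) \;:=\; \int_{K_{\SL_3}} \pi(k)W_v \otimes \rho_s(k)f_s\,dk \;\in\; V_\pi \hat{\otimes} V_{\rho_s}
\]
is well defined by continuity of the integrand and compactness of $K_{\SL_3}$, and depends holomorphically on $s$ since the integrand does. Applying the meromorphic family of continuous bilinear forms $B(\cdot,\cdot;s)$ to $T(s)$ gives $Z(W_v, f_s)$, which therefore inherits the meromorphic continuation of $B$. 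Continuity of the extended $Z$ as a bilinear form on $V_\pi \hat{\otimes} V_{\rho_s}$ follows from continuity of $B$ together with the boundedness of the averaging operator $(W_v, f_s) \mapsto T(s)$.

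\textbf{Holomorphy in $u$.} When $\pi = \pi_u$, the same Bochner integration argument applies jointly in $(s,u)$: the integrand $\pi_u(k)W_{v,u} \otimes \rho_s(k)f_s$ depends holomorphically on $u$ wherever $B(W_{v,u},f_s)$ does, and uniform convergence over the compact group $K_{\SL_3}$ preserves both holomorphy and meromorphy in $u$. I expect the only delicate point to be justifying the interchange of the compact integration with the meromorphic family, which amounts to verifying that $T(s)$ is a holomorphic $V_\pi \hat{\otimes} V_{\rho_s}$-valued function on any pole-free open set; this is standard once one has fixed the compact picture of $\rho_s$ and observed that averaging continuous vector-valued functions over a compact group preserves the projective tensor topology.
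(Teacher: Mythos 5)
Your proposal is essentially a reformulation of the paper's argument in the language of vector-valued Bochner integration, rather than a genuinely different route. The paper works directly with the scalar integral $\int_{K_{\SL_3}} B(\pi(k)W_v, \rho_s(k)f_s)\,dk$: it observes that $k \mapsto B(\pi(k)W_v, \rho_s(k)f_s)$ is continuous on the compact group $K_{\SL_3}$, hence bounded; that the integral converges absolutely and locally uniformly in $s$ (and, for principal series, in $u$), which gives meromorphic continuation; and then invokes the Uniform Boundedness Principle to conclude that the family $B_k(v,f_s) := B(\pi(k)W_v,\rho_s(k)f_s)$ is equicontinuous, which yields continuity of $Z$ on $V_\pi\hat\otimes V_{\rho_s}$. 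Your framing, introducing $T(s) = \int_{K_{\SL_3}} \pi(k)W_v\otimes\rho_s(k)f_s\,dk$ and writing $Z = B_s(T(s))$, is the vector-valued form of the same computation; once you commute $B_s$ through the Bochner integral you are back to the paper's scalar estimate. Two places in your proposal are less explicit than they should be, and both are exactly where the paper's Uniform Boundedness step lives. First, you assert that $Z = B_s(T(s))$ ``inherits the meromorphic continuation of $B$,'' but $T(s)$ is not a simple tensor; upgrading the hypothesis (meromorphy of $B$ on simple tensors) to meromorphy of $s\mapsto B_s(x)$ for a general $x\in V_\pi\hat\otimes V_{\rho_s}$ requires local equicontinuity of $\{B_s\}$ on compact $s$-sets, which is not literally part of property $\mathcal{M}$ but is obtained by the same Banach--Steinhaus argument. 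Second, the claimed ``boundedness of the averaging operator'' is really the statement that $x\mapsto\int_{K_{\SL_3}}(\pi(k)\otimes\rho_s(k))x\,dk$ is continuous on the completed projective tensor product, which again rests on equicontinuity of the orbit operators $\{\pi(k)\otimes\rho_s(k)\}_{k\in K_{\SL_3}}$. Neither of these is hard, but if you spell them out you will find yourself writing the Uniform Boundedness argument the paper uses, so the Bochner packaging does not eliminate the key analytic step, it only relocates it.
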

    \begin{proof}
       If $B(W_v,f_s)$ satisfies property $\mathcal{M}$, then whenever $s=s_0$ belongs to a compact set away from the poles of $Z(W_v,f_s)$, the function
       \begin{equation*}
          k\quad\mapsto\quad B(\pi(k)W_v, \rho_s(k)f_s),
       \end{equation*}
       with both $v\in V_\pi$ and $f_s\in V_{\rho_s}$ fixed, is a continuous function on a compact group, hence it is bounded. Moreover, the integral
       \begin{equation}\label{Eq: 119}
           Z(W_v, f_s) = \int_{K_{\SL_3}} B(\pi(k)W_v, \rho_s(k)f_s)dk
       \end{equation}
       converges absolutely and uniformly in $s$ when $s$ runs in that compact set. This proves the meromorphic continuation of $Z(W_v,f_s)$. Similarly, when $\pi = \pi_u$, the convergence of (\ref{Eq: 119}) is also uniform in $u$ when $u$ runs in a compact set $\Omega$ away from the poles. Thus, if $B(W_{v,u},f_s)$ is meromorphic in $u$, then $Z(W_{v,u},f_s)$ is also meromorphic in $u$. For each fixed $k\in K_{\SL_3}$, the function
       \begin{equation*}
          (v,f_s)\quad\mapsto\quad B(\pi(k)W_v, \rho_s(k)f_s)
       \end{equation*}
       is a bounded continuous bilinear form on $V_\pi\hat{\otimes} V_{\rho_s}$. Set
       \begin{equation*}
          B_k(v,f_s) = B(\pi(k)W_v, \rho_s(k)f_s).
       \end{equation*}
       Then by the Uniform Boundedness Principle (see \cite{Tr}), the family of bounded continuous bilinear form $B_k$ indexed by $k\in K_{\SL_3}$ is equicontinuous. In other words, let $d_{\pi,\rho}$ be the metric describing the topology of the Fr\'{e}chet space $V_\pi\hat{\otimes} V_{\rho_s}$ , then for any $\eps >0$, there exists $\delta>0$ such that for any two pairs $(v_1, f_{1,s}), (v_2, f_{2,s})\in V_\pi\hat{\otimes} V_{\rho_s}$ satisfying
       \begin{equation*}
          d_{\pi,\rho}((v_1, f_{1,s}), (v_2, f_{2,s}))<\delta,
       \end{equation*}
       we have
       \begin{equation*}
          \abs{B(\pi(k)W_{v_1}, \rho_s(k)f_{1,s})-B(\pi(k)W_{v_2}, \rho_s(k)f_{2,s})}<\eps.
       \end{equation*}
       Therefore
       \begin{equation*}
          \begin{aligned}
          \abs{Z(W_{v_1},f_{1,s})-Z(W_{v_2},f_{2,s})} &\leq \int_{K_{\SL_3}} \abs{B(\pi(k)W_{v_1}, \rho_s(k)f_{1,s})-B(\pi(k)W_{v_2}, \rho_s(k)f_{2,s})}dk\\
          &\leq \eps.
          \end{aligned}
       \end{equation*}
       This shows that $Z(W_v, f_s)$ is a continuous bilinear form.
    \end{proof}
    The following Dixmier-Malliavin lemma (see \cite{D-M}) is well known to experts.
    \begin{lemma}
       Let $(\pi, V)$ be a continuous representation of a Lie group $\RG$ on a Fr\'{e}chet space $V$. Then every smooth vector $v\in V$ can be represented by a finite linear combination
       \begin{equation*}
            v = \sum_{i} \pi(f_i)v_i = \sum_i \int_{\RG} \pi(x)f_i(x)v_i dx,
       \end{equation*}
       where $f_i(x)\in C^\infty_c(\RG)$, and $v_i$ are smooth vectors in $V$.
    \end{lemma}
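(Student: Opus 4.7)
The plan is to deduce this from two essentially independent ingredients: a factorization theorem for $C_c^\infty(\RG)$ under convolution, and the elementary fact that a smooth vector can be realized as $\pi(\phi)w$ for some $\phi \in C_c^\infty(\RG)$ and some smooth $w \in V$. Granted both, for any smooth $v$ we first write $v = \pi(\phi)w$; then we factor $\phi = \sum_i g_i * h_i$ with $g_i, h_i \in C_c^\infty(\RG)$, which (using $\pi(g*h) = \pi(g)\pi(h)$ on smooth vectors) gives $v = \sum_i \pi(g_i)\bigl(\pi(h_i)w\bigr)$, and each vector $v_i := \pi(h_i)w$ is smooth because convolution of $w$ against a compactly supported smooth function preserves smoothness. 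This is exactly the desired representation.

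For the factorization step, I would first reduce to $\RG = \BR$ by the following standard trick: choose exponential coordinates on a neighbourhood $U$ of the identity together with a partition of unity subordinate to a finite cover of the support of any given $\phi \in C_c^\infty(\RG)$ by left translates of $U$. Pushing each local piece to $\BR^n$ via the coordinate chart and then inducting on dimension (writing $\BR^n = \BR \times \BR^{n-1}$ and separating variables via tensor decomposition of $C_c^\infty$) reduces the claim to: every $f \in C_c^\infty(\BR)$ is a finite sum $\sum g_i * h_i$ with $g_i, h_i \in C_c^\infty(\BR)$. This one-dimensional statement is the arithmetic heart of Dixmier--Malliavin; I would prove it by taking a suitable $\chi \in C_c^\infty(\BR)$ with $\int \chi = 1$ and using a Taylor-type expansion or a Fourier-analytic argument (writing $\hat{f}/\hat{\chi}$ as a convergent sum of compactly supported Fourier transforms) to decompose $f - \chi * f = $ higher-order corrections which themselves factor through smaller mollifiers; the finiteness of the sum comes from the compactness of the support of $f$ and of the mollifiers.

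For the assertion that every smooth vector has the form $\pi(\phi)w$, I would use the Gårding-type argument: choose a one-parameter subgroup $\exp(tX)$ and consider the smooth curve $t \mapsto \pi(\exp tX)v$. Convolving against a bump function on $\BR$ supported near zero and iterating over a basis of $\fg$ yields an equation $v = \pi(\phi)w$ with $w$ smooth; alternatively, and more cleanly, one applies the factorization step above to any Gårding representative $v = \pi(\eta)v'$ (where $\eta$ is an approximate identity evaluated against $v$, yielding a smooth $v'$), but since the existence of such an $\eta$ is itself nontrivial for smooth (not merely continuous) $v$, the cleanest route is to apply the factorization directly to the regularizer produced by the heat semigroup associated to a sub-Laplacian on $\RG$.

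The main obstacle is the explicit one-dimensional factorization, since it is the only genuinely non-formal step: the reduction to $\BR$ and the passage from factorization to the statement of the lemma are both soft. One must either produce an effective Fourier-side argument, controlling the growth of $1/\hat{\chi}$ after a suitable choice of $\chi$, or exhibit a direct integral identity expressing $\delta_0$ as a finite sum of convolutions of compactly supported smooth mollifiers modulo error terms absorbed by induction. The argument in \cite{D-M} accomplishes this through a careful explicit construction, and any proof I write would essentially reproduce this step; the rest of the argument then assembles formally as above.
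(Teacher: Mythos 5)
The paper gives no proof of this lemma: it is the Dixmier--Malliavin theorem, and the paper simply states it with the citation \cite{D-M}. I therefore evaluate the proposal on its own merits.

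There is a genuine circularity in your plan. You split the argument into (i) the convolution-factorization theorem for $C_c^\infty(\RG)$ and (ii) the claim that every smooth $v$ has the form $\pi(\phi)w$ with $\phi\in C_c^\infty(\RG)$ and $w$ smooth, and you call (ii) ``elementary.'' It is not: (ii) is precisely the hard content of Dixmier--Malliavin, namely that the G{\aa}rding space exhausts the smooth vectors. Once (ii) is granted, the bootstrap via (i) is indeed soft, as you say --- but then you have assumed the theorem up to factoring a single test function. Neither of your attempts to supply (ii) closes the gap. Convolving $t \mapsto \pi(\exp tX)v$ against a bump function produces $\pi(\chi)v$, which only \emph{approximates} $v$; no identity $v = \pi(\phi)w$ falls out of this, and iterating over a basis of $\fg$ does not change that. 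The heat-semigroup suggestion fails for the opposite reason: the heat kernel is smooth but not compactly supported, so $e^{-t\Delta}v$ is not of the form $\pi(\phi)w$ with $\phi\in C_c^\infty(\RG)$, and truncating the kernel reintroduces an error term whose absorption is exactly the nontrivial iterative step of \cite{D-M}, not a separate ``soft'' reduction.

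Moreover, the one-dimensional engine at the heart of \cite{D-M} is not a factorization of $C_c^\infty(\BR)$ under convolution but a distributional identity: given a sequence $(\lambda_n)$ dominating the seminorms of $\pi(X)^{2n}v$, one constructs $\gamma,\theta\in C_c^\infty(\BR)$ and a rapidly decreasing positive sequence $(a_n)$ calibrated to $(\lambda_n)$ so that $\delta_0 = \sum_n (-1)^n a_n\,\gamma^{(2n)} + \theta$ as distributions on $\BR$. Pairing with $t\mapsto\pi(\exp tX)v$ and integrating by parts yields $v = \pi(\gamma)\bigl(\sum_n (-1)^n a_n\,\pi(X)^{2n}v\bigr) + \pi(\theta)v$, where the calibration forces the inner series to converge to a smooth vector; iterating over a basis of $\fg$ then upgrades $C_c^\infty(\BR)$ to $C_c^\infty(\RG)$. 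The factorization theorem you invoke is a \emph{corollary} of this lemma (apply it to the regular representation of $\BR$ on $C_c^\infty(\BR)$), not an independent route to the representation-theoretic statement. Without proving that distributional lemma and the accompanying convergence estimates, the proposal assumes the difficult half of the theorem and gestures at the other.
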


    We apply the Dixmier-Malliavin lemma to the subgroup $U_1=\Big\{\mtrthr{1}{x}{}{}{1}{}{}{}{1}\Big\}$. Every $v\in V_\pi$ is a finite linear combination of $\pi(\varphi_i^{(1)})v_i$:
    \begin{equation*}
       v=\sum_i \pi(\varphi_i^{(1)})v_i,
    \end{equation*}
    where $\varphi_i^{(1)}\in C^\infty_c(F)$ and each $v_i \in V_\pi$. Thus, accordingly,
    \begin{equation}\label{eq: update 001}
       W_v(\mtrthr{t_1^3t_2^3}{}{}{}{t_2^3}{}{}{}{1}) = \sum_i W_{\pi(\varphi_i^{(1)})v_i}(\mtrthr{t_1^3t_2^3}{}{}{}{t_2^3}{}{}{}{1}) = \sum_i W_{v_i}(\mtrthr{t_1^3t_2^3}{}{}{}{t_2^3}{}{}{}{1})\widehat{\varphi_i^{(1)}}(t_1^3),
    \end{equation}
    where $\widehat{\varphi_i^{(1)}}$ is a Fourier transform of $\varphi_i^{(1)}$, hence a Schwartz function.
    We apply the Dixmier-Malliavin lemma to the subgroup $U_2=\Big\{\mtrthr{1}{}{}{}{1}{x}{}{}{1}\Big\}$ and apply the same trick as in \eqref{eq: update 001}. Then we can conclude that the function
    $$W_v(\mtrthr{t_1^3t_2^3}{}{}{}{t_2^3}{}{}{}{1})$$
    is a finite linear combination of
    \begin{equation}\label{eq: update 02}
       W_{v_j}(\mtrthr{t_1^3t_2^3}{}{}{}{t_2^3}{}{}{}{1}){\varphi^{(1)}_j}(t_1^3)\varphi^{(2)}_j(t_2^3),
    \end{equation}
    where $v_j\in V_\pi$, $\varphi_j^{(1)}$ and $\varphi_j^{(2)}$ are Schwartz functions on $F$. Using the same calculation as in the proof of Theorem \ref{Theorem: Absolute Convergence of Integral} (see \eqref{Eq:07}), we can obtain that
    \begin{equation}\label{Eq: Expression of B}
      \begin{aligned}
       B(W_v,f_s)
                  &= \int_F\int_{(\BR_+^\times)^2} W_v(\mtrthr{t_1^3t_2^3}{}{}{}{t_2^3}{}{}{}{1})\psi((t_1^2+t_2^2)^{\frac{3}{2}}z)\cdot\Big\lvert\frac{t_1^{9s}t_2^{9s}}{(t_1^2+t_2^2)^{\frac{9}{2}s}}\Big\lvert_F\cdot\abs{t_1^{-6}t_2^{-6}}_F \\
           &\qquad\qquad \cdot\abs{\abs{z}^2+1}_F
^{-\frac{3}{2}s}\cdot\abs{t_1^2+t_2^2}_F^{\frac{3}{2}}\cdot 3f_s(k''(z)k'(t_1^{-1}t_2)w_\beta )d^\times t_1d^\times t_2dz.
       \end{aligned}
    \end{equation}

    Because both $V_\pi$ and the space of Schwartz functions on $F$, denoted by $\mathcal{S}(F)$, are Fr\'{e}chet spaces and the bilinear map $\mathcal{S}(F)\times V_\pi \rightarrow V_\pi$:
    \begin{equation*}
        (\varphi, v)\mapsto \pi(\varphi)v
    \end{equation*}
    is separately continuous, we can conclude that the linear map $\mathcal{S}(F)\hat{\otimes}V_\pi\mapsto V_\pi$:
    \begin{equation*}
        \varphi\hat{\otimes}v \mapsto \pi(\varphi)v
    \end{equation*}
    is continuous and surjective (by Deximier-Malliavin Lemma). Hence it is also an open map by the Open Mapping Theorem.

    Now we combine \eqref{eq: update 02}, \eqref{Eq: Expression of B} and conclude that: to prove that $B(W_{v,u},f_s)$ satisfies the property $\mathcal{M}$ and  is meromorphic in $u$, it suffices to prove that the following integral $C(W_{v,u},f_s)$ satisfies property $\mathcal{M}$ and is meromorphic in $u$:
    \begin{equation}\label{Eq: Expression of C}
       \begin{aligned}
       C(W_{v,u},f_s) &:= \int_{F}\int_0^{+\infty}\int_0^{+\infty} W_{v,u}(\mtrthr{t_1^3t_2^3}{}{}{}{t_2^3}{}{}{}{1})\psi(z)\cdot \abs{t_1^{9s-6}t_2^{9s-6}}_F\cdot\varphi_1(t_1)\varphi_2(t_2) \\
           &\qquad\qquad\cdot\abs{\abs{z}^2+(t_1^2+t_2^2)^{3}}_F
^{-\frac{3}{2}s}\cdot 3f_s(k''(\frac{z}{(t_1^2+t_2^2)^{\frac{3}{2}}})k'(t_1^{-1}t_2)w_\beta )d^\times t_1d^\times t_2dz,\\
       \end{aligned}
    \end{equation}
    where $\varphi_1$,$\varphi_2\in \CS(F)$.
    \begin{rk}
       The goal to introduce Schwartz functions $\varphi_1$, $\varphi_2$ is to control the behaviour of the integrand of the RHS of \eqref{Eq: Expression of C} when $t_1$ or $t_2$ goes to infinity.
    \end{rk}


    \subsection{Proof of Theorem \ref{Theorem: Meromorphic Continuation}}

    We only focus on the real case ($F=\BR$), the proof of the complex case is similar. Note that by the Iwasawa decomposition of $\SL_2(\BR)$,
    \begin{equation*}
       \mtrtwo{1}{}{t_1^{-1}t_2}{1} = \mtrtwo{\frac{t_1}{\sqrt{t_1^2+t_2^2}}}{}{}{\frac{\sqrt{t_1^2+t_2^2}}{t_1}}\mtrtwo{1}{t_1^{-1}t_2}{}{1}\mtrtwo{\frac{t_1}{\sqrt{t_1^2+t_2^2}}}{\frac{-t_2}{\sqrt{t_1^2+t_2^2}}}{\frac{t_2}{\sqrt{t_1^2+t_2^2}}}{\frac{t_1}{\sqrt{t_1^2+t_2^2}}}.
    \end{equation*}
    Hence as a function of two variables $t_1, t_2$, $k'$ is a smooth bounded function on $[0,+\infty)\times [0,+\infty)-\{(0,0)\}$. If we set $t_1 = r\cos\theta, t_2=r\sin\theta$, then variable $r$ doesn't appear in $k'$. As a function of $\theta$, $k'$ can be extended to a smooth function on $[0,2\pi]$.

    Let us first deal with the $dz$-integral first.

    Set
    \begin{equation}\label{Eq: Def of F(t1,t2,s)}
       F(t_1,t_2,s) := 3(t_1^2+t_2^2)^{\frac{9}{4}s}\int_\BR \psi(z)\cdot (\abs{z}^2+(t_1^2+t_2^2)^3)^{-\frac{3}{2}s}f_s(k''(\frac{z}{(t_1^2+t_2^2)^{\frac{3}{2}}})k'(t_1^{-1}t_2)w_\beta)dz.
    \end{equation}
    \begin{lemma}\label{Lemma: Holomorphic continuation of dz integral}
       For any $(t_1,t_2)\ne (0,0)$, $F(t_1,t_2,s)$ converges absolutely when $\text{Re}(s)>\frac{1}{3}$, and it has a holomorphic continuation in the whole complex plane.
       Moreover, if we use polar coordinate
       \begin{equation*}
          t_1 = r\cos\theta, t_2 = r\sin\theta,
       \end{equation*}
       then as a function of $r$ and $\theta$, $F(t_1(r,\theta),t_2(r,\theta),s)$ (simply denoted by $F(r,\theta,s)$ in the following) is a bounded smooth function in $\theta$ and behaves like a Schwartz function when $r$ tends to infinity. $F(r,\theta,s)$ also has an asymptotic expansion
       \begin{equation*}
        F(r,\theta,s) \sim \sum_{k=0}^{+\infty} a_k(\theta)r^{\frac{9}{2}s+6k}+\sum_{k=0}^{+\infty} b_k(\theta)r^{3-\frac{9}{2}s+6k}.
    \end{equation*}
       when $r$ tends to zero, where $a_k(\theta),b_k(\theta)$ are bounded smooth functions of $\theta$.
    \end{lemma}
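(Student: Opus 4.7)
The plan is to extract the explicit $r$-dependence via the substitution $z = r^3 u$ (with $r = \sqrt{t_1^2+t_2^2}$), which gives
\begin{equation*}
F(r,\theta,s) \;=\; 3\, r^{3-9s/2}\, I(r,\theta,s), \qquad I(r,\theta,s) \;:=\; \int_\BR \psi(r^3 u)\,(1+u^2)^{-3s/2} f_s(k''(u)k'(\tan\theta)w_\beta)\,du,
\end{equation*}
in which $u\mapsto k''(u)$ is smooth with well-defined limits at $\pm\infty$ (from the explicit Iwasawa decomposition of $x_{-\beta}(-u)$) and $\theta\mapsto k'(\tan\theta)$ is smooth since $t_1^{-1}t_2=\tan\theta$. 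Because $f_s$ is smooth on the compact set $K_{\RG_2}$ and $s\mapsto f_s$ is entire in the induction parameter, the factor $f_s(k''(u)k'(\tan\theta)w_\beta)$ is smooth in $(u,\theta)$ and uniformly bounded on $u\in\BR$, so $\Re(s)>\tfrac{1}{3}$ yields absolute convergence from the $L^1(du)$-decay of $(1+u^2)^{-3\Re(s)/2}$.

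For the holomorphic continuation in $s$, I will exploit the oscillation of the character. Writing $\psi(x)=e^{icx}$ for a fixed $c\in F^\times$, repeated integration by parts via $\psi(r^3u) = (icr^3)^{-1}\partial_u \psi(r^3u)$ produces, after $N$ iterations (with vanishing boundary terms),
\begin{equation*}
I(r,\theta,s) \;=\; \frac{1}{(-icr^3)^N}\int_\BR \psi(r^3u)\,\partial_u^N\!\big[(1+u^2)^{-3s/2}f_s(k''(u)k'(\tan\theta)w_\beta)\big]\,du.
\end{equation*}
Leibniz's rule expresses the integrand as a finite sum of terms dominated by polynomials in $u,s$ times $(1+u^2)^{-3s/2-m}$ with $m\le N$, each absolutely integrable for $\Re(s)>(1-N)/3$; sending $N\to\infty$ gives holomorphy on all of $\BC$ for every $r>0$. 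The same identity gives $I(r,\theta,s)=O(r^{-3N})$ as $r\to\infty$ uniformly on $\theta$- and $s$-compacta, together with matching estimates for $\theta$-derivatives after differentiation under the integral, which combine with the $r^{3-9s/2}$ prefactor to give the Schwartz decay in $r$ and joint smoothness in $(r,\theta)$ of $F$.

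The most delicate step is the asymptotic expansion at $r=0$. The guiding observation is that if the smooth factor were constant, $I$ would coincide, up to normalization, with the Fourier transform of $(1+u^2)^{-3s/2}$ at frequency $cr^3$, i.e.\ a modified Bessel function $K_{(3s-1)/2}(cr^3)$, whose standard two-scale small-argument expansion
\begin{equation*}
K_\nu(x) \;\sim\; x^{-\nu}\sum_{k\ge 0}\alpha_k x^{2k} \;+\; x^\nu\sum_{k\ge 0}\beta_k x^{2k}
\end{equation*}
(with $\nu=(3s-1)/2$ and $x=cr^3$) gives precisely the progressions $r^{6k}$ and $r^{3s-1+6k}$; the prefactor $r^{3-9s/2}$ converts these into $r^{3-9s/2+6k}$ and $r^{9s/2+6k}$, matching the stated expansion. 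To accommodate the genuine smooth factor, I will split $\BR$ via a smooth cutoff: on the bulk $|u|\le U$ Taylor-expand $u\mapsto f_s(k''(u)k'(\tan\theta)w_\beta)$ around $u=0$ (the coefficients being smooth and bounded in $\theta$ and holomorphic in $s$), and on the tails substitute $u=1/v$ and Taylor-expand around $v=0$ (using the smooth limits of $k''(u)$ at $\pm\infty$). Integrating term-by-term against $\psi(r^3u)$ and evaluating the resulting moment Fourier transforms $\int u^n(1+u^2)^{-3s/2}e^{icr^3u}\,du$ by differentiating the Bessel expansion reduces the analysis to exponents in the two asserted arithmetic progressions.

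The hard part will be verifying in this last step that the odd-$u$ Taylor coefficients contribute only to the stated exponent progressions modulo $6$, not to intermediate ones. Each extra factor of $u$ differentiates the Bessel transform once in its frequency variable and so shifts the raw exponent by $1$ (hence by $3$ in $r$ after $w=cr^3$); reconciling this with the stated spacing of $6$ should follow from a parity/symmetrization argument pairing $u\to -u$ with $\psi\leftrightarrow\bar\psi$, showing that odd-order contributions either cancel or combine with analogous terms to collapse back into the two asserted progressions. This combinatorial bookkeeping, done uniformly in $\theta$ and with the holomorphy in $s$ maintained throughout, is the delicate point of the argument.
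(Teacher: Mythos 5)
Your handling of absolute convergence and of the holomorphic continuation in $s$ (via the rescaling $z = r^3 u$ and repeated integration by parts against the oscillating character) is valid and, in the continuation step, more self-contained than the paper's. The treatment of Schwartz decay in $r$ and the remark about the smooth limits of $k''$ at $\pm\infty$ are also fine. The gap lies in the asymptotic expansion at $r\to 0$, and it is a real gap rather than ``combinatorial bookkeeping.'' An odd-moment transform $\int_\BR u^{2m+1}(1+u^2)^{-3s/2}e^{icr^3u}\,du$ is an odd function of $w = cr^3$ and hence, by differentiating the $K$-Bessel expansion, contributes terms proportional to $w^{2k+1}$ and $|w|^{3s-1}w^{2k+1}$, i.e.\ after the prefactor $r^{3-\frac{9}{2}s}$ to the progressions $r^{-\frac{9}{2}s+6k}$ and $r^{\frac{9}{2}s-3+6k}$, which are offset by $3$ from the two families in the statement. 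There is no evident $u\mapsto -u$ symmetry to kill these: $u\mapsto f_s(k''(u)k'(\theta)w_\beta)$ has no prescribed parity for a generic smooth section $f_s$, and the substitution $u\to-u$ also replaces $\psi$ by $\bar\psi$, so the proposed pairing produces a different integral rather than a cancellation. (Indeed, for a non-spherical $K$-type the corresponding classical Whittaker function $W_{\kappa,\mu}$ has a small-argument expansion spaced by $z$, not $z^2$, so one should not expect the odd terms to vanish in general; if you carry your method through you will in fact obtain a $r^3$-spaced expansion, which is still perfectly adequate for the meromorphic-continuation argument that uses this lemma.)

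The paper sidesteps all of this by a structural observation: the restriction of $f_s$ to the copy of $\SL_2(F)$ in the upper-left corner of $\SL_3(F)$ lies in the principal series $\Ind_{B_{\SL_2}}^{\SL_2(F)}\abs{\;}^{\frac{3s-1}{2}}\otimes\abs{\;}^{-\frac{3s-1}{2}}$, and for each fixed $\theta$ the $dz$-integral defining $F(r,\theta,s)$ is precisely the Jacquet integral of a right translate of $f_s$ evaluated along the torus. Holomorphic continuation in $s$, Schwartz decay as $r\to\infty$, and the asymptotic expansion as $r\to 0$ then come directly from Wallach's archimedean Whittaker theory \cite[Section 15.4]{Wal2}, and the smoothness and boundedness of the $\theta$-coefficients follow from the recursive limit procedure in \cite[Section 1.4]{Bl-H}. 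I'd recommend adopting this identification: it turns the lemma into a citation of known results and avoids the Bessel/Taylor bookkeeping entirely.
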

    \begin{proof}
    For every fixed $(t_1,t_2)\ne (0,0)$, $F(t_1,t_2,s)$ converges absolutely when $\text{Re}(s)>\frac{1}{3}$, hence it defines a holomorphic function in the right half plane $\text{Re}(s)>\frac{1}{3}$.

    If we set $t_1 = r\cos\theta, t_2 = r\sin\theta$, then
    \begin{equation*}
        F(r,\theta,s) = 3r^{\frac{9}{2}s}\int_\BR \psi(z)\cdot (z^2+r^6)^{-\frac{3}{2}s}f_s(k''(\frac{z}{r^3})k'(\theta)w_\beta)dz.
    \end{equation*}
    $\SL_3(\BR)$ contains a subgroup $H$ that is isomorphic to $\SL_2(\BR)$ lying on the upper left corner:
    \begin{equation*}
        H := \set{\mtrtwo{g}{}{}{1}}{g\in \SL_2(\BR)}.
    \end{equation*}
    The restriction of any function $f_s\in V_{\rho_s}$ on $H$ satisfies the following invariant property:
    \begin{equation*}
       f_s(\mtrthr{1}{z}{}{}{1}{}{}{}{1}\mtrthr{t}{}{}{}{t^{-1}}{}{}{}{1}x) = \abs{t}^{3s}f_s(x),
    \end{equation*}
    hence $f_s\lvert_H$ lies inside a principal series of $\SL_2(\BR)$:
    $$\Ind_{B_{\SL_2}}^{\SL_2(\BR)} \abs{\quad}^{\frac{3s-1}{2}}\otimes \abs{\quad}^{-\frac{3s-1}{2}}.$$
    When we fix $\theta$, then $F(r,\theta,s)$ is exactly the Jacquet integral of a right translation of $f_s$. Hence by \cite[Section 15.4]{Wal2}, the above integral has a holomorphic continuation to the whole complex plane, and it behaves like a Schwartz function when $r$ tends to infinity. Moreover, $F(r,\theta,s)$ has the following asymptotic expansion when $r$ tends to zero:
    \begin{equation*}
        F(r,\theta,s) \sim \sum_{k=0}^{+\infty} a_k(\theta)r^{\frac{9}{2}s+6k}+\sum_{k=0}^{+\infty} b_k(\theta)r^{3-\frac{9}{2}s+6k}.
    \end{equation*}
    Because all the asymptotic coefficients $a_k(\theta), b_k(\theta)$ can be obtained by limit process recursively (see \cite[Section 1.4]{Bl-H}), they are bounded smooth functions of $\theta$.
    \end{proof}

    Now we begin to prove the desired meromorphic continuation of $C(W_{v,u},f_s)$. In the real case, the integral $C(W_{v,u},f_s)$ defined in \eqref{Eq: Expression of C} can be rewritten as
    \begin{equation*}
        \begin{aligned}
           C(W_{v,u},f_s) &= \int_0^{+\infty}\int_0^{+\infty} W_{v,u}(\mtrthr{t_1^3t_2^3}{}{}{}{t_2^3}{}{}{}{1})\cdot\frac{t_1^{9s-6}t_2^{9s-6}}{(t_1^2+t_2^2)^{\frac{9}{4}s}}\\
           &\quad\cdot\varphi_1(t_1)\varphi_2(t_2)\cdot F(t_1,t_2,s)
            d^\times t_1d^\times t_2.
        \end{aligned}
    \end{equation*}
    We note that $F(t_1,t_2,s)$ is smooth on $[0,+\infty)\times [0,\infty)-\{(0,0)\}$. At the origin $(0,0)$, we can only expect an asymptotic expansion as above.

    \begin{proof}[Proof of Theorem \ref{Theorem: Meromorphic Continuation}, real case]
        Still, because we want to keep track on the parameters $u$, we only focus on the case $\pi = \pi_u$. A word by word repetition will also work for any irreducible generic Casselman-Wallach representation $\pi$. Let $u$ run in a fixed closed ball $\Omega = B(u_0,r_0)$. Since both $V_{\pi_u}$ and $V_{\rho_s}$ are Fr\'{e}chet spaces, the notions of continuity and separate continuity on $V_{\pi_u}\hat{\otimes}V_{\rho_s}$ coincide. So we can fix $f_s$ first and prove the continuity in $v$. We fix a vertical strip Re$(s)\in (a,b)$ (here $a<b$ are two real numbers). For $m=1,2$, we choose two negative numbers $\xi_1,\xi_2$ which has sufficiently large absolute values. The exact conditions that they satisfy will be clear from the following proof.

        We choose $k_1,k_2$ so large such that
        \begin{equation*}
          \begin{aligned}
            &-\text{Re}(\xi_1)-k_1+2\mu<-1,\\
            &-\text{Re}(\xi_2)-k_2+2\mu<-1.
          \end{aligned}
       \end{equation*}
       Then there exist two finite subsets $C_u^{(1)}, C_u^{(2)}$, a finite set of polynomials $\mathcal{P}_u$, a finite subset of non-negative integers $\mathcal{L}$ and a finite subset $\mathcal{D}\sbst U(\mathfrak{gl}_3(\BC))$ such that $W_{v,u}(e^{x_1H_1+x_2H_2})$ has an asymptotic expansion as in Theorem $\ref{Theorem: Asymptotic Expansion second variable}$ when $x_2 \geq 0$, and $W_v(e^{x_1H_1+x_2H_2})$ has an asymptotic expansion as in Theorem \ref{Theorem: Asymptotic Expansion GL_3} when $x_1,x_2 \geq 0$.

       We fix a very small positive number $1>\eps>0$ and split the integral $C(W_{v,u},f_s)$ into four parts:
        \begin{equation}\label{eq: 12345}
           \int_\eps^{+\infty}\int_\eps^{+\infty},\qquad\int_0^{\eps}\int_\eps^{+\infty},\qquad\int_\eps^{+\infty}\int_0^{\eps},\qquad\int_0^{\eps}\int_0^{\eps}.
        \end{equation}

        Case 1: $t_1,t_2\geq \eps$.
        Since both $\varphi_1(t_1)$ and $\varphi_2(t_2)$ are Schwartz functions and $F(t_1,t_2,s)$ behaves like a Schwartz function when $t_1^2+t_2^2 \rightarrow +\infty$, the integral
        \begin{equation*}
           \int_\eps^{+\infty}\int_\eps^{+\infty} W_{v,u}(\mtrthr{t_1^3t_2^3}{}{}{}{t_2^3}{}{}{}{1})\cdot\frac{t_1^{9s-6}t_2^{9s-6}}{(t_1^2+t_2^2)^{\frac{9}{4}s}}\cdot
           \cdot\varphi_1(t_1)\varphi_2(t_2)\cdot F(t_1,t_2,s) d^\times t_1d^\times t_2
        \end{equation*}
        converges absolutely and defines a holomorphic function of $s$. Moreover, by the estimate (\ref{Eq: 08}), the above integral defines a holomorphic function in $u\in\Omega$ which is continuous in $v$.

        Case 2: $t_1\geq \eps, t_2\leq \eps$.
        We apply the asymptotic expansion of $W_{v,u}$ in the variable $t_2$. Then by Theorem \ref{Theorem: Asymptotic Expansion second variable},  $W_{v,u}(\mtrthr{t_1^3t_2^3}{}{}{}{t_2^3}{}{}{}{1})$ is a finite $\BC(u)$-linear combination of terms of the form
        \begin{enumerate}
          \item type 1: $t_2^{-3\xi_u}(\ln t_2)^r\cdot W_{\pi(e)v,u}(\mtrthr{t_1^3}{}{}{}{1}{}{}{}{1})$,
          \item type 2: $t_2^{-3\xi_u}(\ln t_2)^r\cdot \int_0^{+\infty} e^{(-\xi-k_2)t}t^l W_{\pi(D)v,u}(\mtrthr{t_1^3}{}{}{}{1}{}{}{}{1}\cdot e^{tH_2})dt$,
          \item type 3: $t_2^{-3\xi_u}(\ln t_2)^r\cdot \int_{-3\ln t_2}^{+\infty} e^{(-\xi-k_2)t}t^l W_{\pi(D)v,u}(\mtrthr{t_1^3}{}{}{}{1}{}{}{}{1}\cdot e^{tH_2})dt$,
          \item type 4: $t_2^{-3\xi_u}(\ln t_2)^r\cdot \int_{0}^{-3\ln t_2} e^{(-\xi-k_2)t}t^l W_{\pi(D)v,u}(\mtrthr{t_1^3}{}{}{}{1}{}{}{}{1}\cdot e^{tH_2})dt$,
        \end{enumerate}
        where all $e, D\in\mathcal{D}$, $r, l\in\mathcal{L}$, $\xi_u\in C_u^{(2)}$. To unify our notations, each term above can be written as
        \begin{equation*}
           t_2^{-3\xi_u}(\ln t_2)^r\cdot H(t_1,t_2,v,u),
        \end{equation*}
        where $H(t_1,t_2,v,u)$ is holomorphic in $u\in\Omega$ and uniformly continuous in $t_1,t_2,v$ when $u$ runs in $\Omega$. It also satisfies the estimates in Theorem \ref{Theorem: Asymptotic Expansion second variable}. Put
        \begin{equation*}
           G(t_2,v,u,s) := \int_\eps^{+\infty} H(t_1,t_2,v,u)\frac{t_1^{9s-6}}{(t_1^2+t_2^2)^{\frac{9}{4}s}}\varphi_1(t_1)F(t_1,t_2,s)d^\times t_1.
        \end{equation*}
       Then for each fixed $t_2$, the integral $G(t_2,v,u,s)$ converges absolutely for all $s$ and $u\in \Omega$, hence it defines a holomorphic function in $s$ and $u$. It is also a continuous in $t_2$ and linear in $v$. The second part of $C(W_{v,u},f_s)$ in \eqref{eq: 12345} is a finite linear combination of integrals of the form
       \begin{equation}\label{Eq: 12}
          \int_{0}^\eps t_2^{9s-6-3\xi_u}(\ln t_2)^r\varphi_2(t_2)G(t_2,v,u,s) d^\times t_2.
       \end{equation}

       If $H(t_1,t_2, u, v)$ is of type 1 or 2, then it is in fact a continuous function in $t_1$ ($t_2$ does not appear in $H$) bounded by $\norm{e^{x_1H_1}}^\mu q'(v)$ for some continuous seminorm $q'$ (by Theorem \ref{Theorem: Asymptotic Expansion second variable}). In these two cases, from the expression
       \begin{equation*}
          G(t_2,v,u,s) = \int_\eps^{+\infty} H(t_1,v,u)\frac{t_1^{9s-6}}{(t_1^2+t_2^2)^{\frac{9}{4}s}}\varphi_1(t_1)F(t_1,t_2,s)d^\times t_1,
       \end{equation*}
       $G(t_2,v,u,s)$ and all its partial derivatives $\frac{\partial^l G}{\partial t_2^l}(t_2,v,u,s)$ are smooth functions in $t_2$, holomorphic in $u$, and continuous and linear in $v$. Therefore, the integral
       \begin{equation*}
          \int_{0}^\eps t_2^{9s-6-3\xi_u}(\ln t_2)^r\varphi_2(t_2)G(t_2,v,u,s) d^\times t_2
       \end{equation*}
       admits a meromorphic continuation in $s$. This meromorphic continuation is also meromorphic in $u$ and continuous in $v$ (since each $\xi_u$ is a polynomial of $u$ of degree 1).

       If $H(t_1,t_2, u, v)$ is of type 3, then by the estimates in Theorem \ref{Theorem: Asymptotic Expansion second variable},
       \begin{equation*}
          \begin{aligned}
          &\abs{t_2^{-3\xi_u}(\ln t_2)^rG(t_2,v,u,s)}\\
          \leq  &\int_\eps^{+\infty} \big\lvert t_2^{-3\xi_u}(\ln t_2)^r H(t_1,t_2,u,v)\frac{t_1^{9s-6}}{(t_1^2+t_2^2)^{\frac{9}{4}s}}\varphi_1(t_1)F(t_1,t_2,s)\big\rvert d^\times t_1\\
          \leq &\int_\eps^{+\infty}\big\lvert t_2^{-3\text{Re}(\xi_2)}\cdot t_1^{6\mu}h(\ln t_2)q'(v)\cdot\frac{t_1^{9s-6}}{(t_1^2+t_2^2)^{\frac{9}{4}s}}\varphi_1(t_1)F(t_1,t_2,s)\big\rvert d^\times t_1\\
          \leq &t_2^{-3\text{Re}(\xi_2)}h(\ln t_2)q''(v)
          \end{aligned}
       \end{equation*}
       for some polynomial $h$ and continuous seminorm $q''$. Therefore
       \begin{equation*}
          \abs{t_2^{9s-6-3\xi_u}(\ln t_2)^r\varphi_2(t_2)G(t_2,v,s)}\leq t_2^{9\text{Re}(s)-6-3\text{Re}(\xi_2)}h(\ln t_2)\abs{\varphi_2(t_2)}q''(v)
       \end{equation*}
       for some polynomial $h$ and continuous seminorm $q''$. Since we can choose Re$(\xi_2)$ as negative as we want, if we assume that the exponent satisfies $$9\text{Re}(s)-6-3\text{Re}(\xi_2)>1$$ in the vertical strip Re$(s)\in (a,b)$, then (\ref{Eq: 12}) is holomorphic in $s$ and $u$, and defines a continuous linear function in $v$.

       If $H(t_1,t_2,u,v)$ is of type 4, the estimate for type 3 also holds in this case, since the estimate for 2) and 4) in Theorem \ref{Theorem: Asymptotic Expansion second variable} share the same pattern. Thus, the integral
       \begin{equation*}
          \int_{0}^\eps t_2^{9s-6-3\xi_u}(\ln t_2)^r\varphi_2(t_2)G(t_2,v, u, s) d^\times t_2
       \end{equation*}
       also defines a holomorphic function in $s$ and $u$, which is also continuous in $v$.

        Case 3: $t_1\leq \eps, t_2\geq \eps$.
        Since $t_1$ and $t_2$ play a symmetric role in the integral, the proof is exactly the same as Case 2.

        Case 4: $t_1<\eps, t_2<\eps$.
        We apply Theorem $\ref{Theorem: Asymptotic Expansion GL_3}$ again. When $t_1<\eps, t_2<\eps$ , $W_v(\mtrthr{t_1^3t_2^3}{}{}{}{t_2^3}{}{}{}{1})$ is a finite $\BC(u)$-linear combination of terms of the form
        \begin{enumerate}
          \item $t_1^{-3\eta_u}t_2^{-3\xi_u}(\ln t_1)^{r_1}(\ln t_2)^{r_2}f_0(v,u),$
          \item $t_1^{-3\eta_u}(\ln t_1)^{r_1}(\ln t_2)^{r_2}f_2(\ln t_2, v,u),$
          \item $t_2^{-3\xi_u}(\ln t_1)^{r_1}(\ln t_2)^{r_2}f_1(\ln t_1, v,u),$
          \item $(\ln t_1)^{r_1}(\ln t_2)^{r_2}f_3(\ln t_1, \ln t_2,v,u),$
        \end{enumerate}
        where $r_1, r_2 \in \mathcal{L}$, $\eta_u\in C_u^{(1)}$, $\xi_u\in C_u^{(2)}$. Moreover, $f_0,f_1,f_2$ admit estimates as in Theorem $\ref{Theorem: Asymptotic Expansion GL_3}$, i.e
        \begin{enumerate}
            \item $\abs{f_0(v,u)}\leq q'(v),$
            \item $\abs{f_1(\ln t_1,v,u)}\leq t_1^{-3\text{Re}(\xi_1)}h_1(\ln t_1)q'(v),$
            \item $\abs{f_2(\ln t_2,v,u)}\leq t_2^{-3\text{Re}(\xi_2)}h_2(\ln t_2)q'(v),$
            \item $\abs{f_3(\ln t_1,\ln t_2,v,u)}\leq t_1^{-3\text{Re}(\xi_1)}t_2^{-3\text{Re}(\xi_2)}h_3(\ln t_1,\ln t_2)q'(v).$
        \end{enumerate}
        We need to study
        \begin{equation}\label{Eq: 13}
           \int_0^\eps\int_0^\eps W_{v,u}(\mtrthr{t_1^3t_2^3}{}{}{}{t_2^3}{}{}{}{1})\cdot\frac{t_1^{9s-6}t_2^{9s-6}}{(t_1^2+t_2^2)^{\frac{9}{4}s}}\cdot\varphi_1(t_1)\varphi_2(t_2)F(t_1,t_2,s) d^\times t_1d^\times t_2.
        \end{equation}
        We can choose $\eps$ so small that $F(t_1,t_2,s)$ (or equivalently $F(r,\theta,s)$) can be approximated by its asymptotic expansion (see Lemma \ref{Lemma: Holomorphic continuation of dz integral}).
        \begin{equation*}
        F(r,\theta,s) \sim \sum_{k=0}^{+\infty} a_k(\theta)r^{\frac{9}{2}s+6k}+\sum_{k=0}^{+\infty} b_k(\theta)r^{3-\frac{9}{2}s+6k}.
    \end{equation*}

    After we cut off the first finite terms $F_q(r,\theta,s)$ in the asymptotic expansion of $F(r,\theta,s)$, the remainder
    \begin{equation*}
         F(r,\theta,s)-F_q(r,\theta,s) = O(r^p).
    \end{equation*}
    Here $p$ can be as large as we want if we choose a sufficiently large $q$. Now we use the estimate
    \begin{equation*}
        \abs{W_{v,u}(\mtrthr{t_1^3t_2^3}{}{}{}{t_2^3}{}{}{}{1})} \leq 6^{\mu}t_1^{-6\mu}t_2^{-6\mu}q(v).
    \end{equation*}
    If we choose a large $p$ such that $p$ can beat any exponent appearing in the integrand of (\ref{Eq: 13}), then
    \begin{equation*}
        \int_0^\eps\int_0^\eps W_{v,u}(\mtrthr{t_1^3t_2^3}{}{}{}{t_2^3}{}{}{}{1})\cdot\frac{t_1^{9s-6}t_2^{9s-6}}{(t_1^2+t_2^2)^{\frac{9}{4}s}}\cdot\varphi_1(t_1)\varphi_2(t_2)(F(t_1,t_2,s)-F_q(t_1,t_2,s)) d^\times t_1d^\times t_2
    \end{equation*}
    is a holomorphic function in $s$ and $u$, and continuous in $v$. We note that the choice of $p$ (hence $q$) only depends on the vertical strip $(a,b)$. Therefore it suffices to prove the meromorphic continuation of
    \begin{equation*}
        \int_0^\eps\int_0^\eps W_{v,u}(\mtrthr{t_1^3t_2^3}{}{}{}{t_2^3}{}{}{}{1})\cdot\frac{t_1^{9s-6+l_1}t_2^{9s-6+l_2}}{(t_1^2+t_2^2)^{\frac{9}{4}s}}\cdot F_q(t_1,t_2,s), d^\times t_1d^\times t_2.
    \end{equation*}
    Clearly the above is a linear combination of
    \begin{equation*}
       \int_0^\eps\int_0^\eps W_{v,u}(\mtrthr{t_1^3t_2^3}{}{}{}{t_2^3}{}{}{}{1})\cdot\frac{t_1^{9s-6+l_1}t_2^{9s-6+l_2}}{(t_1^2+t_2^2)^{\frac{9}{4}s}}\cdot r^{\frac{9}{2}s+6k}a_k(\theta) d^\times t_1d^\times t_2.
        \end{equation*}
        and
        \begin{equation}\label{Eq: 25}
         \int_0^\eps\int_0^\eps W_{v,u}(\mtrthr{t_1^3t_2^3}{}{}{}{t_2^3}{}{}{}{1})\cdot\frac{t_1^{9s-6+l_1}t_2^{9s-6+l_2}}{(t_1^2+t_2^2)^{\frac{9}{4}s}}\cdot r^{-\frac{9}{2}s+6k+3}b_k(\theta), d^\times t_1d^\times t_2.
        \end{equation}
        The proofs of the meromorphic continuation for above two integrals are almost the same. We only establish the meromorphic continuation for the second one in detail, because the second one is slightly more complicated since the denominator $(t_1^2+t_2^2)^{\frac{9}{4}s} = r^{\frac{9}{2}s}$ can not be canceled out. We note that there are only finitely many $l_1,l_2,k$. They only depend on the vertical strip Re(s)$\in(a,b)$, not on the choice of $\xi_1,\xi_2$.

        The term $t_1^{-3\eta_u}t_2^{-3\xi_u}(\ln t_1)^{r_1}(\ln t_2)^{r_2}f_0(v,u)$ contributes an integral
        \begin{equation}\label{Eq: 15}
           \int_0^\eps\int_0^\eps t_1^{-3\eta_u}t_2^{-3\xi_u}(\ln t_1)^{r_1}(\ln t_2)^{r_2}\frac{t_1^{9s-6+l_1}t_2^{9s-6+l_2}}{(t_1^2+t_2^2)^{\frac{9}{2}s}}\cdot          r^{6k+3}b_k(\theta) d^\times t_1d^\times t_2\cdot f_0(v,u)
        \end{equation}
        in (\ref{Eq: 25}). We aim to show that the above integral has a meromorphic continuation for nonnegative integers $l_1,l_2,k$ and any bounded smooth functions $b_k(\theta)$. Because the integrand is not factorizable in the Cartesian coordinate, we are facing a situation slightly more complicated than that in \cite{Sou}, yet this complication still can be resolved in the polar coordinate.
        \begin{equation*}
            t_1 = r\cos\theta, t_2 = r\sin\theta.
        \end{equation*}
       After expanding polynomials $(\ln t_1)^{r_1} = (\ln r+\ln \cos\theta)^{r_1}$ and $(\ln t_2)^{r_2} = (\ln r+\ln \sin\theta)^{r_2}$, we only need to prove that the following integral has a meromorphic continuation in $s$
        \begin{equation}\label{eq: update elementary integral}
           \iint_{D} r^{9s+a_1}(\ln r)^{a_2}\cdot(\cos\theta)^{9s+b_1}(\ln \cos\theta)^{b_2}\cdot(\sin\theta)^{9s+c_1}(\ln\cos\theta)^{c_2}f(\theta)drd\theta,
        \end{equation}
        where $D$ is a square $[0,\eps]\times[0,\eps]$, $a_1,b_1,c_1$ are polynomials of $u$ of degree 1, $a_2,b_2,c_2$ are nonnegative integers, and $f(\theta)$ is a bounded smooth function in $\theta$. But this is very elementary and the proof is based on integration by parts. For details, see Appendix \ref{appendix: elementary integral}.

        The term $t_1^{-3\eta_u}(\ln t_1)^{r_1}(\ln t_2)^{r_2}f_2(\ln t_2, v, u)$ contributes an integral
        \begin{equation}\label{Eq: 22}
           \int_0^\eps\int_0^\eps (t_1)^{-3\eta_u}(\ln t_1)^{r_1}(\ln t_2)^{r_2}f_2(\ln t_2,v,u)\frac{t_1^{9s-6}t_2^{9s-6}}{(t_1^2+t_2^2)^{\frac{9}{2}s}}\cdot          t_1^{l_1}t_2^{l_2}r^{6k+3}b_k(\theta) d^\times t_1d^\times t_2
        \end{equation}
        in (\ref{Eq: 25}).
        We change variable $t_2 \mapsto t_1t_2$, then (\ref{Eq: 22}) is equal to a finite linear combination of
        \begin{equation}\label{Eq: 23}
           \begin{aligned}
           &\int_0^\eps\int_0^{\epsilon t_1} t_1^{9s-9+l_1+l_2+6k-3\eta_u}(\ln t_1)^{r_1'}(\ln t_2)^{r_2'}f_2(\ln t_1t_2,v,u)\\
           &\quad\cdot\frac{t_2^{9s-6+l_2}}{(t_2^2+1)^{\frac{1}{2}(9s-6k-3)}}\cdot          b_k(\arctan{t_2}) d^\times t_2d^\times t_1.
           \end{aligned}
        \end{equation}
        By the estimate of $f_2$, we get
        \begin{equation*}
            \abs{f_2(\ln t_1t_2,v,u)}\leq (t_1t_2)^{-3\text{Re}(\xi_2)}h_2(\ln t_1t_2)q'(v),
        \end{equation*}
        for some polynomial $h_2$ and a continuous seminorm $q'$. As we only have finitely many $l_1,l_2,k$ which only depend on $(a,b)$, if we require the exponent $\Re(\xi_2)$ to be sufficiently negative so that it will beat the exponents of $t_1$, $t_2$ in (\ref{Eq: 23}), then the integral (\ref{Eq: 23}) converges absolutely in the vertical strip $\text{Re}(s)\in (a,b)$. Thus it defines a holomorphic function in $s$ and $u$ which is continuous in $v$. The contribution of the term $t_2^{-3\xi_u}(\ln t_1)^{r_1}(\ln t_2)^{r_2}f_1(\ln t_1, v,u)$ can be analyzed in the same way.

        The term $(\ln t_1)^{r_1}(\ln t_2)^{r_2}f_3(\ln t_1, \ln t_2,v)$ contributes an integral
        \begin{equation}\label{Eq: 16}
           \int_0^\eps\int_0^\eps (\ln t_1)^{r_1}(\ln t_2)^{r_2}f_3(\ln t_1,\ln t_2,v,u)\frac{t_1^{9s-6+l_1}t_2^{9s-6+l_2}}{(t_1^2+t_2^2)^{\frac{9}{2}s}}\cdot          r^{6k+3}b_k(\theta) d^\times t_1d^\times t_2
        \end{equation}
        in (\ref{Eq: 25}). By the estimate of $f_3$, if we choose Re$(\xi_1)$, Re$(\xi_2)$ so negative that the exponent -3Re$(\xi_1)$ and -3Re$(\xi_2)$ in the estimate of $f_3$ can beat all the exponents in the integrand of $(\ref{Eq: 16})$, then the integral $(\ref{Eq: 16})$ is holomorphic in $s,u$ and continuous in $v$. In the end, after taking the $\BC(u)$-linear combination, we finally prove that the local integrals are meromorphic in $s, u$.

        To summarize all of the above, we proved that $C(W_{v,u},f_s)$ has a meromorphic continuation in $s,u$ and continuous in $v$ under the Fr\'{e}chet topology of $V_{\pi_u}$. The continuity on the second variable is straight forward. Because $f_s$ only affects $F(t_1,t_2,s)$. The coefficients $a_k(\theta)$, $b_k(\theta)$ depend on $f_s$. They are continuous with respect to $f_s$ when $f_s$ runs in the Fr\'{e}chet space $V_{\rho_s}$.  So, if a sequence $f_{s,k}\rightarrow 0$ in $V_{\rho_s}$, then all $a_k(\theta)$, $b_k(\theta)$ tend to zero. This implies the meromorphic continuation of $C(W_{v,u},f_s)$ is continuous on $V_{\rho_s}$.
    \end{proof}

    \begin{rk}
        To end this Section, we remark that the proof of the complex case proceeds almost in the same way as the real case, the only difference comes from the $dz$-integral. In the complex case, the $dz$-integral is the Jacquet integral of a principal series of $\SL_2(\BC)$.
    \end{rk}


\section{The Uniqueness Theorem (Archimedean Case)}\label{section: uniqueness thm}

   Let us first quote a Lemma in \cite{Gin}.
    \begin{lemma}\label{Lemma: Double Coset Decompostion of G2}
       \begin{enumerate}
         \item $\lvert P\quo \RG_2/\SL_3\rvert = 2$, and we can take representatives $e$ and $\gamma = x_{-(\alpha+\beta)}(-1)w_\beta$.
         \item \begin{equation*}
                       \SL_3^\gamma := \SL_3\cap \gamma^{-1} P\gamma = \bigset{n_2\mtrthr{a}{}{}{}{1}{}{}{}{a^{-1}}}{n_2\in N_2},
               \end{equation*}
               where the subgroup $N_2$ consists matrices of the form \eqref{eq: group N2}.
         \end{enumerate}
    \end{lemma}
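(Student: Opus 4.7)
The plan is to establish (2) first via a direct computation using the commutator and Weyl-group formulas in Lemmas \ref{Lemma��Commutator Relations}--\ref{Lemma: Adjoint Action of Lie Algebra}, and then to deduce (1) from a dimension count together with an analysis of Schubert cells on $\RG_2/P$.

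For (2), I would begin by identifying $\SL_3\subset \RG_2$ as the subgroup generated by the long-root subgroups, choosing $\beta$ and $3\alpha+\beta$ (whose sum is the highest long root $3\alpha+2\beta$) to play the roles of the simple roots of $\SL_3$. Under this identification the candidate stabilizer $N_2\cdot\{\mathrm{diag}(a,1,a^{-1})\}$ is realized inside $\RG_2$ as a product of a one-parameter subtorus with two one-parameter unipotent flows: the ``diagonal'' one combining $x_\beta(x)$ with $x_{3\alpha+\beta}(-x)$, and the central one $x_{3\alpha+2\beta}(z)$. With this in hand I would conjugate an arbitrary such element by $\gamma=x_{-(\alpha+\beta)}(-1)w_\beta$ and check, using Lemma \ref{Lemma��Commutator Relations} to push $x_{-(\alpha+\beta)}$ through the long-root unipotents and Lemma \ref{Lemma: Adjoint Action of Lie Algebra} to propagate $w_\beta$, that every resulting factor lies in $P$. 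The crucial point is that the linkage between the $(1,2)$ and $(2,3)$ entries of $n_2$ (the shared parameter $x$ with opposite sign) forces the components in the potentially problematic directions $-\alpha,\, -(\alpha+\beta)$ to cancel.

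For the reverse inclusion, I would invoke the Bruhat decomposition $\SL_3=\bigsqcup_{w\in W(\SL_3)} B_{\SL_3}\,w\,B_{\SL_3}$ and check cell by cell that no element outside $N_2\cdot\{\mathrm{diag}(a,1,a^{-1})\}$ lies in $\gamma^{-1}P\gamma$. Equivalently, one may argue by a codimension bound: since $\SL_3^\gamma$ is the stabilizer of $\gamma P$ on the $5$-dimensional variety $\RG_2/P$, necessarily $\dim\SL_3^\gamma\ge 3$, and since the forward inclusion already produces a connected $3$-dimensional subgroup, equality must hold. With (2) established, part (1) follows from dimension counts: $\dim P\gamma\SL_3=\dim P+\dim\SL_3-\dim\SL_3^\gamma=9+8-3=14$, so $P\gamma\SL_3$ is open and dense in $\RG_2$, while $\dim P\cdot\SL_3=9+8-\dim(\SL_3\cap P)=9+8-5=12$. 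To see that the complement of $P\gamma\SL_3$ in $\RG_2$ is exactly $P\cdot\SL_3$ and not larger, I would enumerate the $6$ Schubert cells of $P\quo\RG_2$ (indexed by $W_P\quo W(\RG_2)$) and verify directly that each of them meets either $P\cdot\SL_3$ or $P\gamma\SL_3$, ruling out an intermediate orbit.

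\emph{Main obstacle.} The step that carries the real content is the forward inclusion in (2): verifying $N_2\cdot\{\mathrm{diag}(a,1,a^{-1})\}\subset\gamma^{-1}P\gamma$. The Weyl element $w_\beta$ and the unipotent $x_{-(\alpha+\beta)}(-1)$ each individually take generic long-root subgroups out of $P$, and only the specific linkage in the definition of $N_2$ (and the particular choice of $\gamma$) causes those obstructions to cancel. Organizing this cancellation --- essentially the reason the pair $(\gamma,\SL_3)$ unfolds Ginzburg's global integral --- is the heart of the lemma, and careful control of signs against the choices of root vectors in \eqref{eq: Root vector g2} will be the main technical work; everything else in the proof is dimension counting or a routine Bruhat-cell check.
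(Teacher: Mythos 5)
The paper itself does not prove this lemma: it is introduced with the remark ``Let us first quote a Lemma in \cite{Gin},'' so there is no in-paper argument to compare yours against, and I can only assess your plan on its own terms.

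Your approach to the forward inclusion in (2) is sound. Realizing $N_2\cdot\{\mathrm{diag}(a,1,a^{-1})\}$ inside $\RG_2$ via the long-root subgroups $x_\beta$, $x_{3\alpha+\beta}$, $x_{3\alpha+2\beta}$ (with the highest long root $3\alpha+2\beta$ central in $N_{\SL_3}$), and then pushing $x_{-(\alpha+\beta)}(-1)$ and $w_\beta$ through using the commutator relations and Weyl-group actions of Section~1, is exactly the right computation, and your observation that the linked $\pm x$ in the two super-diagonal entries of $N_2$ is what makes the $-\alpha$ and $-(\alpha+\beta)$ components cancel is the genuine content. The cell-by-cell check over the Bruhat decomposition of $\SL_3$ for the reverse inclusion is laborious but would also work. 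Your dimension counts $\dim P=9$, $\dim\SL_3\cap P=5$, $\dim\SL_3^\gamma=3$ are correct.

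However, both of your shortcuts have genuine gaps. First, the ``codimension bound'' alternative for the reverse inclusion does not close. Both facts you cite, that $\dim\SL_3^\gamma\ge 8-5=3$ because the orbit of $\gamma P$ lies in a $5$-dimensional variety, and that $\SL_3^\gamma$ contains a connected $3$-dimensional subgroup by the forward inclusion, are \emph{lower} bounds, and two lower bounds of $3$ give only $\dim\SL_3^\gamma\ge 3$, not equality. To get the needed upper bound $\dim\SL_3^\gamma\le 3$ you would have to know that $P\gamma\SL_3$ is open in $\RG_2$, which is precisely part (1), and you go on to deduce (1) from (2); as written the reasoning is circular. Even once $\dim\SL_3^\gamma=3$ is known, you still need a component-group argument (e.g.\ intersecting with the torus) to conclude that $\SL_3^\gamma$ is exactly the connected group you exhibited and not a finite extension of it. Second, the Schubert-cell enumeration for (1) is not well-posed. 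The cells $PwB$ are $B$-orbits on $P\backslash\RG_2$ for the full Borel $B$ of $\RG_2$, and since $B\not\subset\SL_3$, a Schubert cell need not be contained in a single $\SL_3$-orbit; observing that a cell ``meets'' $P\SL_3$ or $P\gamma\SL_3$ therefore rules out nothing. The correct replacement is to work with $B_{\SL_3}$-orbits on $P\backslash\RG_2$ (which \emph{are} contained in $\SL_3$-orbits) or to parametrize the $\SL_3$-orbits directly, both of which are tractable on a $5$-dimensional flag variety. The safe logical order is: prove the forward inclusion of (2) by the root computation; establish the two-orbit statement (1) by an honest orbit classification; conclude $\dim\SL_3^\gamma=3$ from openness of $P\gamma\SL_3$; then finish the reverse inclusion of (2) via the $\SL_3$-Bruhat check or the dimension-plus-components argument.
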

   Now we are ready to prove Theorem \ref{Theorem: Uniqueness Theorem}.

   \begin{proof}[Proof of Theorem \ref{Theorem: Uniqueness Theorem}]
   Given two Fr\'{e}chet representations of $\pi_1$ and $\pi_2$ of a Lie group $G$, we denote by $\Bil_G(\pi_1,\pi_2)$ the space of $G$-equivariant continuous bilinear form on $V_{\pi_1}\hat{\otimes} V_{\pi_2}$. The dimension of $\Bil_G(\pi_1,\pi_2)$ is called the intertwining number between representations $\pi_1$ and $\pi_2$. By the Reciprocity Law (\cite[Theorem 6;4]{BruhatThesis}), we get
   \begin{equation*}
      \dim \Bil_{\SL_3(F)}(\pi,\quad \Ind_{P(F)}^{\RG_2(F)}\delta_P^{s-\frac{1}{2}}\Big\lvert_{\SL_3(F)}) = \dim \Bil_{\RG_2(F)} (\Ind_{\SL_3(F)}^{\RG_2(F)}\pi,\quad \Ind_{P(F)}^{\RG_2(F)}\delta_P^{s-\frac{1}{2}}).
   \end{equation*}
    Let $\SL_3(F)$ act on the flag variety $P(F)\quo\RG_2(F)$ on the right. By Lemma \ref{Lemma: Double Coset Decompostion of G2}, there are exactly two orbits with representatives $e$ and $\gamma = x_{-(\alpha+\beta)}(-1)w_\beta$. Then by \cite[Theorem 6;3]{BruhatThesis}, we have an estimate:
   \begin{equation}\label{eq: Bruhat estimate}
       \begin{aligned}
          \dim \Bil_{\RG_2(F)} (\Ind_{\SL_3(F)}^{\RG_2(F)}\pi, \Ind_{P(F)}^{\RG_2(F)}\delta_P^{s-\frac{1}{2}})
       \leq \Big(\sum_{n=0}^{+\infty} i(\delta_p^{s-\frac{1}{2}},\pi, e,n)\Big) + i(\delta_P^{s-\frac{1}{2}},\pi,\gamma,0),
       \end{aligned}
   \end{equation}
   where $i(\delta_p^{s-\frac{1}{2}},\pi,\Omega_e,n)$ is the intertwining number between the two representations of $$H_e :=P(F)\cap \SL_3(F).$$ One is
   \begin{equation*}
       \delta_P^{s-\frac{1}{2}}\hat{\otimes}\pi
   \end{equation*}
   and the other one is some finite dimensional representation $\Lambda_n$ coming from transversal derivatives. The second term $i(\delta_P^{s-\frac{1}{2}},\pi,\gamma,0)$ on the RHS of \eqref{eq: Bruhat estimate} is the intertwining number between two representations of $H_\gamma := \SL_3(F)\cap \gamma^{-1}P(F)\gamma$:
   \begin{equation*}
      \delta_\gamma: h\mapsto \delta_P^{s-\frac{1}{2}}(\gamma h\gamma^{-1}) \qquad \text{and }\quad \pi\big\lvert_{H_\gamma}.
   \end{equation*}

   We claim that for all $n$, $i(\delta_p^{s-\frac{1}{2}},\pi, e,n) = 0$. We know that any eigenvalue of a unipotent matrix on a finite dimensional vector space must be $1$. Suppose that $i(\delta_p^{s-\frac{1}{2}},\pi,\Omega_e,n) \neq 0$ for some $n$. Since $H_e$ contains all standard upper triangular unipotent matrices in $\SL_3(F)$, the generic character must be trivial. We get a contradiction.

   As for the intertwining number $i(\delta_P^{s-\frac{1}{2}},\pi,\gamma,0)$, we apply Casselman's Subrepresentation Theorem \cite[Section 3.8.3]{Wal1} and assume that $\pi$ is a quotient of a principal series $\Ind_{B_{\SL_3}}^{\SL_3(F)}\sigma$. Then
   \begin{equation}\label{eq: update 04}
      \begin{aligned}
         \dim \Bil_{H_\gamma}(\pi\big\lvert_{H_\gamma}, \delta_\gamma) &= \dim \Bil_{\SL_3(F)}(\pi,\quad \Ind_{H_\gamma}^{\SL_3(F)}\delta_\gamma)\\
                                                                 &\leq \dim \Bil_{\SL_3(F)}(\Ind_{B_{\SL_3}}^{\SL_3(F)}\sigma,\quad \Ind_{H_\gamma}^{\SL_3(F)}\delta_\gamma).\\
      \end{aligned}
   \end{equation}
   The RHS of \eqref{eq: update 04} can be estimated via Bruhat's theory as well. By Lemma \ref{Lemma: Double Coset Decompostion of G2}, we have
   $$H_\gamma = \SL_3(F)\cap \gamma^{-1}P(F)\gamma = \Big\{\mtrthr{1}{-x}{z}{}{1}{x}{}{}{1}\mtrthr{a}{}{}{}{1}{}{}{}{a^{-1}}\Big\}.$$
   The orbit space which we consider this time is $B_{\SL_3}\quo \SL_3(F)/H_\gamma$. There are twelve orbits. We can choose the representatives $y_j=w_j\mtrthr{1}{1}{}{}{1}{}{}{}{1}$ and $w_j$, where $w_j$ are Weyl elements of $\SL_3(F)$ listed below:
   \begin{equation*}
      \begin{aligned}
      &w_1=I, &w_2=\mtrthr{1}{}{}{}{}{1}{}{-1}{}, \quad&w_3=\mtrthr{}{1}{}{-1}{}{}{}{}{1},\\
      &w_4=\mtrthr{}{1}{}{}{}{1}{1}{}{}, &w_5=\mtrthr{}{}{1}{1}{}{}{}{-1}{}, \quad&w_6=\mtrthr{}{}{1}{}{1}{}{-1}{}{}.
      \end{aligned}
   \end{equation*}
  Then
   \begin{equation*}
      \text{dim Bil}_{\SL_3(F)}(\Ind_{B_{SL_3}}^{\SL_3(F)}\sigma,\quad \Ind_{H_\gamma}^{\SL_3(F)}\delta_{\gamma})
   \end{equation*}
   is bounded by
   \begin{equation*}
      \sum_n\sum_{j=0}^6 i(\sigma, \delta_\gamma,y_j,n)+\sum_n\sum_{i=0}^6 i(\sigma, \delta_\gamma,w_i,n).
   \end{equation*}
   Here, for $i=1,2,3,4,5,6$, we can show by a simple matrix computation that $H_i=B_{\SL_3}\cap w_i^{-1}H_\gamma w_i$ contains a nontrivial diagonal subgroup. Similarly, for $j=1,2,3,4,5$, $H_j^y=B_{\SL_3}\cap y_j^{-1}H_\gamma y_j$ contains a nontrivial one dimensional abelian subgroup $A(j)$. To be more precise,
   \begin{equation*}
      \begin{aligned}
      &A(1)=\bigset{y_1^{-1} \mtrthr{a}{a-1}{}{}{1}{}{}{}{a^{-1}} y_1}{a\in F^\times};\\
      &A(2)=\bigset{y_2^{-1} \mtrthr{a}{a-1}{}{}{1}{}{}{}{a^{-1}} y_2}{a\in F^\times};\\
      &A(3)=\bigset{y_3^{-1} \mtrthr{1}{}{z}{}{1}{}{}{}{1} y_3}{z\in F};\\
      &A(4)=\bigset{\mtrthr{1}{\frac{a-1}{a}}{}{}{a^{-1}}{}{}{}{a}}{a\in F^\times};\\
      &A(5)=\bigset{\mtrthr{a^{-1}}{}{}{}{a}{-a+1}{}{}{1}}{a\in F^\times}.
      \end{aligned}
   \end{equation*}
   For each above abelian subgroup $A(j)$, we can choose a generator $d_j$. By Bruhat theory, $i(\sigma, \delta_\gamma,y_j,n)$ is the intertwining number between the representation
   \begin{equation}
      \sigma\otimes (\delta_\gamma)^{y_j}
   \end{equation}
   of $H_j^y$ and some finite dimensional representation $\tilde{\Lambda}_n$ of $H_j^y$ coming from transversal derivatives. Here $(\delta_\gamma)^{y_j}$ is the representation obtained from twisting $\delta_\gamma$ by ${y_j}$. If $i(\sigma, \delta_\gamma,y_j,n)$ is nonzero for some $n$, then $\sigma\otimes (\delta_\gamma)^{y_j}$ is a one dimensional subrepresentation of $\tilde{\Lambda}_n$.  Hence
   \begin{equation}\label{Eq£º19}
       \sigma\otimes (\delta_\gamma)^{y_j}(d_j) = \chi(d_j)
   \end{equation}
   for some character $\chi$. The above is in fact an equation of $s$ which only has at most countably many solutions. These solutions form a discrete set. The same argument applies to $i(\sigma, \delta_\gamma,w_i,n)$.

   Thus so far, we have at most countably many finite dimensional representations $\tilde{\Lambda_n}$. For each $\tilde{\Lambda}_n$, we only have finitely many one-dimensional subrepresentations. Thus there exists a discrete, at most countable subset $S$ of $\BC$ such that whenever $s\notin S$,
   \begin{equation*}
          i(\sigma, \delta_\gamma,y_j,n) = i(\sigma, \delta_\gamma, w_i,n) = 0,
   \end{equation*}
   where $j=1,2,3,4,5,$ and $i = 1,2,3,4,5,6$. As for $H_6^y$, it is in fact a trivial subgroup, so the above equation (\ref{Eq£º19}) is in fact an identity. The big orbit $B_{\SL_3}y_6H_\gamma$ contributes one in
   \begin{equation*}
        \dim \Bil_{\SL_3(F)}(\pi,\quad \Ind_{P(F)}^{\RG_2(F)}\delta_P^{s-\frac{1}{2}}\Big\lvert_{\SL_3(F)}).
   \end{equation*}

   \end{proof}


\appendix


    \section{Some Elementary Integrals}\label{appendix: elementary integral}

    In this Appendix, we will prove the meromorphic continuation of the elementary integral \eqref{eq: update elementary integral} which appeared in the proof of Theorem \ref{Theorem: Meromorphic Continuation}. Let $D_\eps$ be the unit square $[0,\eps]\times [0,\eps]$ on the $xy$-plane. We use polar coordinate
    \begin{equation*}
       x=r\cos\theta, y=r\sin\theta.
    \end{equation*}
    Our goal is to prove
    \begin{prop}\label{Prop: Technical integral in polar coor}
       Let $f(\theta, s)$ be a smooth periodic function of $\theta$ on $[0,2\pi]$, which is holomorphic in $s$ in some right half plane $\text{Re}(s)\geq s_0$, then for any complex numbers $a_1, b_1, c_1$, and any nonnegative integers $a_2,b_2,c_2$, the following integral
       \begin{equation}\label{eq: update 03}
          \iint_{D_\eps} r^{s+a_1}(\ln r)^{a_2}\cdot(\cos\theta)^{s+b_1}(\ln \cos\theta)^{b_2}\cdot(\sin\theta)^{s+c_1}(\ln\cos\theta)^{c_2}f(\theta,s)drd\theta
       \end{equation}
       converges absolutely when $\text{Re}(s)$ is sufficiently large, and it has a meromorphic continuation to the same right half plane $\text{Re}(s)\geq s_0$. Moreover, if $a_1,b_1,c_1$ are polynomials of a complex variable $u$ of degree 1, then the meromorphic continuation of \eqref{eq: update 03} (in $s$) is also meromorphic in $u$.
    \end{prop}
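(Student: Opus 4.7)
The plan is to reduce \eqref{eq: update 03} to a finite $\BC$-linear combination of products of one-variable integrals, each of which admits a classical meromorphic continuation. Absolute convergence for $\Re(s)\gg 0$ is immediate: the potentially singular factors $r^{s+a_1}$, $(\cos\theta)^{s+b_1}$, and $(\sin\theta)^{s+c_1}$ are integrable at the relevant boundaries once the real parts of their exponents exceed $-1$, and the logarithmic factors together with the bounded smooth $f(\theta,s)$ do not disrupt this.

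To continue meromorphically, split $D_\eps$ along the diagonal $y=x$ into $T_1 = \{0\le y\le x\le\eps\}$ and $T_2 = \{0\le x\le y\le\eps\}$. On $T_1$ substitute $x=\rho$, $y=\rho t$ with $(\rho,t)\in[0,\eps]\times[0,1]$; in these coordinates
\begin{equation*}
r = \rho\sqrt{1+t^2},\quad \cos\theta = (1+t^2)^{-1/2},\quad \sin\theta = t(1+t^2)^{-1/2},\quad dr\,d\theta = (1+t^2)^{-1/2}d\rho\,dt.
\end{equation*}
Substituting back and expanding both $(\ln\rho + \tfrac{1}{2}\ln(1+t^2))^{a_2}$ and $(\ln t - \tfrac{1}{2}\ln(1+t^2))^{c_2}$ (the latter arising from $(\ln\sin\theta)^{c_2}$, which is presumably what the statement intends for the second logarithmic factor) by the binomial theorem, the integral over $T_1$ becomes a finite $\BC$-linear combination of products
\begin{equation*}
\Big(\int_0^\eps \rho^{s+a_1}(\ln\rho)^{j}\,d\rho\Big)\cdot\Big(\int_0^1 t^{s+c_1}(\ln t)^k\Psi_{jk}(t,s)\,dt\Big),\qquad 0\le j\le a_2,\ 0\le k\le c_2,
\end{equation*}
where $\Psi_{jk}(t,s)$ is smooth in $t\in[0,1]$ and holomorphic in $s$ on the right half-plane $\Re(s)\ge s_0$, absorbing all powers of $(1+t^2)$ and $\ln(1+t^2)$ (which are harmless on $[0,1]$) together with $f(\arctan t,s)$. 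The crucial point is that this substitution fully decouples the domain, which was the only obstruction to separating the $r$- and $\theta$-integrations in polar form.

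Each one-variable piece has a classical meromorphic continuation. Repeated integration by parts expresses the $\rho$-integral as a $\BC$-linear combination of the terms $\eps^{s+a_1+1}(\ln\eps)^{j-l}/(s+a_1+1)^{l+1}$ for $0\le l\le j$, meromorphic on all of $\BC$ with a unique pole at $s=-a_1-1$ of order at most $a_2+1$. For the $t$-integral, Taylor-expand $\Psi_{jk}(t,s)=\sum_{m=0}^{N-1}\Psi_{jk}^{(m)}(0,s)t^m/m! + t^NR_N(t,s)$; each monomial integrates explicitly to $(-1)^k k!\,\Psi_{jk}^{(m)}(0,s)/\bigl(m!(s+c_1+m+1)^{k+1}\bigr)$, and the remainder integral is holomorphic for $\Re(s)>-\Re(c_1)-N-1$. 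Letting $N\to\infty$ yields a meromorphic continuation whose poles lie on the arithmetic progression $\{s=-c_1-m-1:m\in\BN\}$, each of order at most $c_2+1$. The integral over $T_2$ is handled identically by swapping the roles of $x$ and $y$ (equivalently of the pairs $(b_1,b_2)$ and $(c_1,c_2)$).

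Finally, when $a_1,b_1,c_1$ depend linearly on $u$, every pole locus above moves affinely in $u$, so the resulting continuation is jointly meromorphic in $(s,u)$. The only genuine technical step is bookkeeping: once the binomial expansions have been carried out and the pure $(\ln\rho)^j$ and $(\ln t)^k$ factors have been isolated, the remaining analytic content is elementary — repeated integration by parts for the $\rho$-integral and Taylor expansion at $t=0$ for the $t$-integral — and the main obstacle amounts to keeping track of the multiplicities of the poles contributed by the interacting logarithmic factors $(\ln\rho)^{a_2}$, $(\ln(1+t^2))^{b_2}$, and $(\ln t)^{c_2}$.
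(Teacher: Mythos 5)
Your proof is correct, and it takes a genuinely different route from the paper. The paper splits $D_\eps$ into the quarter disk $D_1$ and the remaining corner region $D_2$; on $D_1$ the polar form separates exactly, and the resulting angular integral is handled by a ``quasi-Beta'' lemma proved via a somewhat delicate sequence of integrations by parts that simultaneously raise the exponents of $\sin\theta,\cos\theta$ and lower the exponents of the logarithms; on $D_2$ the $r$-integral is evaluated with $\theta$-dependent limits, producing new powers of $\cos\theta$ and $\sin\theta$, after which the quasi-Beta lemma is invoked again. You instead split along the diagonal into the two triangles $T_1,T_2$ and pass to the coordinates $(\rho,t)=(x,y/x)$, which makes the integral over each triangle factor exactly into a one-dimensional $\rho$-integral and a one-dimensional $t$-integral; the $\rho$-integral is elementary, and the $t$-integral is continued by Taylor-expanding the bounded smooth factor $\Psi_{jk}$ at $t=0$, integrating term by term via $\int_0^1 t^a(\ln t)^kdt=(-1)^kk!/(a+1)^{k+1}$, and pushing $N\to\infty$. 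Your decomposition is cleaner in that the decoupling is immediate after the change of variables (no need to separately treat a corner region or to redo an angular reduction twice), and the continuation mechanism for the $t$-integral is the standard ``expand at the boundary singularity'' argument rather than a bespoke integration-by-parts lemma; the paper's route is more uniform with the rest of Section 4 in that it stays in polar coordinates throughout. Both give the same pole loci (arithmetic progressions shifted by $a_1,b_1,c_1$) and the same dependence on $u$, so the rest of the argument in the body of the paper is unaffected. You also correctly flag and correct what is almost certainly a typo in the statement — the second $(\ln\cos\theta)^{c_2}$ should read $(\ln\sin\theta)^{c_2}$, which is what both your proof and the paper's actually treat.
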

    Clearly the meromorphic continuation of \eqref{eq: update elementary integral} is a direct consequence of Proposition \eqref{Prop: Technical integral in polar coor}.
    \begin{lemma}\label{Lemma: Quasi-Beta Funciton}
       Let $f(\theta, s_1,s_2)$ be a smooth periodic function of $\theta$ on $[0,2\pi]$, holomorphic in some right half space $\set{(s_1,s_2)\in \BC^2}{\text{Re}(s_1),\text{Re}(s_2)\geq s_0}$. Then for any complex numbers $b_1, c_1$, and nonnegative integers $b_2,c_2$ the following integral
       \begin{equation}\label{Eq: 17}
          \int_0^{\frac{\pi}{2}} (\cos\theta)^{s_1+b_1}(\ln \cos\theta)^{b_2}\cdot(\sin\theta)^{s_2+c_1}(\ln\cos\theta)^{c_2}f(\theta,s_1,s_2)d\theta
       \end{equation}
       converges absolutely when $\text{Re}(s_1), \text{Re}(s_2)$ are sufficiently large, and it has a meromorphic continuation to the same right half space $\set{(s_1,s_2)\in \BC^2}{\text{Re}(s_1),\text{Re}(s_2)\geq s_0}$.
    \end{lemma}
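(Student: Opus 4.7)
The integrand has potential singularities only at the endpoints $\theta=0$ and $\theta=\pi/2$: near $\theta=0$ only $(\sin\theta)^{s_2+c_1}$ is small while $\cos\theta\to 1$ makes the $\ln\cos\theta$ factors bounded and smooth; near $\theta=\pi/2$ only $(\cos\theta)^{s_1+b_1}$ is small, and although $\ln\cos\theta$ is unbounded it is Mellin-integrable against $(\cos\theta)^{s_1+b_1}$. The strategy is to split $I = I_1+I_2$ at $\theta=\pi/4$ and reduce each half to a one-variable Mellin integral of the form $\int_0^a u^{s+c}(\ln u)^m h(u,s_1,s_2)\,du$ with $h$ smooth in $u$ and holomorphic in the remaining variables. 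The classical meromorphic continuation of such integrals will then yield meromorphy of $I_1$ in $s_2$ (holomorphic in $s_1$) and of $I_2$ in $s_1$ (holomorphic in $s_2$), so the sum $I$ is jointly meromorphic in $(s_1,s_2)$ on the whole right half-space.

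For $I_1$: on $[0,\pi/4]$, the function $g_1(\theta,s_1,s_2) := (\cos\theta)^{s_1+b_1}(\ln\cos\theta)^{b_2+c_2}f(\theta,s_1,s_2)$ is smooth in $\theta$ and jointly holomorphic in $(s_1,s_2)$ on the entire right half-space. Substituting $u=\sin\theta$ (so $du=\cos\theta\,d\theta$) gives
\begin{equation*}
I_1 = \int_0^{1/\sqrt{2}} u^{s_2+c_1}\,\tilde g_1(u,s_1,s_2)\,du,
\end{equation*}
with $\tilde g_1$ smooth in $u\in[0,1/\sqrt{2}]$ and holomorphic in $(s_1,s_2)$. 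Taylor-expanding $\tilde g_1(u,s_1,s_2) = \sum_{k=0}^{N-1} a_k(s_1,s_2)\,u^k + R_N(u,s_1,s_2)$ with $R_N = O(u^N)$ uniformly on compact sets of $(s_1,s_2)$, the first $N$ terms integrate explicitly to $\sum_k a_k(s_1,s_2)(1/\sqrt{2})^{s_2+c_1+k+1}/(s_2+c_1+k+1)$, meromorphic in $s_2$ with simple poles at $s_2=-c_1-k-1$ and holomorphic in $s_1$; the remainder integral converges absolutely for $\Re(s_2) > -\Re(c_1) - N - 1$ and so is holomorphic in that region. Letting $N\to\infty$ produces the desired meromorphic continuation of $I_1$ to the whole right half-space.

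For $I_2$: the symmetric substitution $v=\cos\theta$ on $[\pi/4,\pi/2]$ isolates the singular factor as $v^{s_1+b_1}(\ln v)^{b_2+c_2}$; the remaining factor $(\sin\theta)^{s_2+c_1}=(1-v^2)^{(s_2+c_1)/2}$ is smooth in $v\in[0,1/\sqrt{2}]$ and entire in $s_2$, so it joins $f$ and the other bounded pieces into a function $\tilde g_2(v,s_1,s_2)$ smooth in $v$ and holomorphic in $(s_1,s_2)$. The same Taylor-plus-remainder argument applies, modulo the elementary identity
\begin{equation*}
\int_0^{1/\sqrt{2}} v^{s+k}(\ln v)^m\,dv = \frac{\partial^m}{\partial s^m}\int_0^{1/\sqrt{2}} v^{s+k}\,dv,
\end{equation*}
which is meromorphic in $s$ with a pole of order $m+1$ at $s=-k-1$. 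The only genuine bookkeeping difficulty I expect is tracking these higher-order poles produced by the $(\ln v)^{b_2+c_2}$ factor in $I_2$; this is the one place where the log factors enter nontrivially, but no new analytic ingredient is needed beyond differentiating the pure-power Mellin identity. Combining the continuations of $I_1$ and $I_2$ completes the proof of the lemma.
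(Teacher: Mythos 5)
Your argument is correct, but it takes a genuinely different route from the paper. The paper proves Lemma \ref{Lemma: Quasi-Beta Funciton} by repeated integration by parts directly on the full interval $[0,\tfrac{\pi}{2}]$: each step either lowers one of the logarithmic exponents $b_2,c_2$ by one or raises one of the power exponents $s_1+b_1, s_2+c_1$ by two, so after finitely many steps one reduces first to the log-free integral $\int_0^{\pi/2}(\cos\theta)^{s_1+b_1}(\sin\theta)^{s_2+c_1}f\,d\theta$, and then raises the power exponents until the resulting integral converges on the target half-space; the rational prefactors collected along the way supply the meromorphy. You instead split at $\theta=\pi/4$, substitute $u=\sin\theta$ (resp.\ $v=\cos\theta$) to put each half into classical Mellin form $\int_0^{1/\sqrt 2} u^{s+c}(\ln u)^m \tilde g(u,\cdot)\,du$ with $\tilde g$ smooth, and continue by Taylor expansion plus remainder, using $\partial_s^m$ of the elementary primitive $\int_0^a u^{s+k}du$ to handle the log powers. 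Both arguments are elementary and give the continuation on the same half-space. The tradeoffs: your version makes the pole locations and their orders fully explicit (simple poles at $s_2=-c_1-k-1$ from $I_1$, poles of order $b_2+c_2+1$ at $s_1=-b_1-k-1$ from $I_2$), at the cost of the split at $\pi/4$ and the substitutions; the paper's version avoids the split and works with a single recursion, but keeps the pole data implicit in the rational prefactors. One small caution: you silently assumed the $c_2$ log factor is $\ln\cos\theta$ as literally printed, whereas the paper's own proof treats it as $\ln\sin\theta$ (almost certainly the intended reading, given the symmetry and the Remark applying the lemma on $[0,\pi/4]$ and $[\pi/4,\pi/2]$); your method handles either reading without change, so this does not affect correctness, but you may want to flag the discrepancy.
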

    \begin{proof}
       It is clear that the integral \eqref{Eq: 17} converges when $\text{Re}(s_1), \text{Re}(s_2)$ are sufficiently large. When $\text{Re}(s_1), \text{Re}(s_2)$ are large, by integration by parts, we have
       \begin{equation*}
          \begin{aligned}
          &\int_0^{\frac{\pi}{2}} (\cos\theta)^{s_1+b_1+2}(\ln \cos\theta)^{b_2}\cdot(\sin\theta)^{s_2+c_1}(\ln\cos\theta)^{c_2}f(\theta,s_1,s_2)d\theta\\
          =&\frac{1}{s_2+c_1+1}\int_0^{\frac{\pi}{2}}(s_1+b_1+1)(\sin\theta)^{s_2+c_1+2}(\cos\theta)^{s_1+b_1}(\ln\cos\theta)^{b_2}(\ln\sin\theta)^{c_2}f(\theta,s_1,s_2)d\theta\\
           &+\frac{1}{s_2+c_1+1}\int_0^{\frac{\pi}{2}}(\sin\theta)^{s_2+c_1+2}(\cos\theta)^{s_1+b_1}(\ln\cos\theta)^{b_2-1}(\ln\sin\theta)^{c_2}f(\theta,s_1,s_2)d\theta\\
           &-\frac{1}{s_2+c_1+1}\int_0^{\frac{\pi}{2}}(\sin\theta)^{s_2+c_1}(\cos\theta)^{s_1+b_1+2}(\ln\cos\theta)^{b_2}(\ln\sin\theta)^{c_2-1}f(\theta,s_1,s_2)d\theta\\
           &-\frac{1}{s_2+c_1+1}\int_0^{\frac{\pi}{2}}(\sin\theta)^{s_2+c_1+1}(\cos\theta)^{s_1+b_1+1}(\ln\cos\theta)^{b_2}(\ln\sin\theta)^{c_2}\frac{\partial f}{\partial \theta}(\theta,s_1,s_2)d\theta.\\
          \end{aligned}
       \end{equation*}
       Thus,
       \begin{equation*}
          \begin{aligned}
          &\frac{s_1+b_1+1}{s_2+c_1+1}\int_0^{\frac{\pi}{2}} (\cos\theta)^{s_1+b_1}(\ln \cos\theta)^{b_2}\cdot(\sin\theta)^{s_2+c_1}(\ln\cos\theta)^{c_2}f(\theta,s_1,s_2)d\theta\\
          &=(1+\frac{s_1+b_1+1}{s_2+c_1+1})\int_0^{\frac{\pi}{2}} (\cos\theta)^{s_1+b_1+2}(\ln \cos\theta)^{b_2}\cdot(\sin\theta)^{s_2+c_1}(\ln\cos\theta)^{c_2}f(\theta,s_1,s_2)d\theta\\
          &-\frac{1}{s_2+c_1+1}\int_0^{\frac{\pi}{2}}(\sin\theta)^{s_2+c_1+2}(\cos\theta)^{s_1+b_1}(\ln\cos\theta)^{b_2-1}(\ln\sin\theta)^{c_2}f(\theta,s_1,s_2)d\theta\\
           &+\frac{1}{s_2+c_1+1}\int_0^{\frac{\pi}{2}}(\sin\theta)^{s_2+c_1}(\cos\theta)^{s_1+b_1+2}(\ln\cos\theta)^{b_2}(\ln\sin\theta)^{c_2-1}f(\theta,s_1,s_2)d\theta\\
           &+\frac{1}{s_2+c_1+1}\int_0^{\frac{\pi}{2}}(\sin\theta)^{s_2+c_1+1}(\cos\theta)^{s_1+b_1+1}(\ln\cos\theta)^{b_2}(\ln\sin\theta)^{c_2}\frac{\partial f}{\partial \theta}(\theta,s_1,s_2)d\theta.\\
          \end{aligned}
       \end{equation*}
       Similarly, there is another integration by parts which raises the exponents of $\sin$ by 2. So, whenever we apply integration by parts, either the exponents of $\sin$ or $\cos$ increase, or the exponents of $\ln\sin$ or $\ln\cos$ decrease. Hence it suffices to prove that the following integral has a meromorphic continuation
       \begin{equation}\label{Eq: 18}
          \int_0^{\frac{\pi}{2}} (\cos\theta)^{s_1+b_1}(\sin\theta)^{s_2+c_1}f(\theta,s_1,s_2)d\theta.
       \end{equation}
       As above, by integration by parts, we get
       \begin{equation*}
          \begin{aligned}
          &\frac{s_1+b_1+1}{s_2+c_1+1}\int_0^{\frac{\pi}{2}}(\cos\theta)^{s_1+b_1}(\sin\theta)^{s_2+c_1}f(\theta,s_1,s_2)d\theta\\
          =&(1+\frac{s_1+b_1+1}{s_2+c_1+1})\int_0^{\frac{\pi}{2}}(\cos\theta)^{s_1+b_1+2}(\sin\theta)^{s_2+c_1}f(\theta,s_1,s_2)d\theta\\
          +&(\frac{1}{s_2+c_1+1})\int_0^{\frac{\pi}{2}}(\cos\theta)^{s_1+b_1+1}(\sin\theta)^{s_2+c_1+1}\frac{\partial f}{\partial\theta}(\theta,s_1,s_2)d\theta.\\
          \end{aligned}
       \end{equation*}
       After doing integration by parts as many times as we need, the exponents of $\sin\theta$ and $\cos\theta$ are nonnegative when $\text{Re}(s_1),\text{Re}(s_2)\geq s_0$, this shows (\ref{Eq: 18}), and hence (\ref{Eq: 17}), has a meromorphic continuation.
    \end{proof}
    \begin{rk}
       We can also show that the following integrals
       \begin{equation*}
          \int_0^{\frac{\pi}{4}} (\cos\theta)^{s+b_1}(\ln \cos\theta)^{b_2}\cdot(\sin\theta)^{s+c_1}(\ln\cos\theta)^{c_2}f(\theta,s)d\theta
       \end{equation*}
        and
        \begin{equation*}
          \int_{\frac{\pi}{4}}^{\frac{\pi}{2}} (\cos\theta)^{s+b_1}(\ln \cos\theta)^{b_2}\cdot(\sin\theta)^{s+c_1}(\ln\cos\theta)^{c_2}f(\theta,s)d\theta
       \end{equation*}
        also have meromorphic continuation. The proof is the same as the proof of Lemma \ref{Lemma: Quasi-Beta Funciton}. We only need to change the upper and lower bounds of the integrals.
    \end{rk}
    Now we return to prove Proposition \ref{Prop: Technical integral in polar coor}. For simplicity, we only prove it for $\eps=1$, the proof for general $\eps$ is exactly the same (only notationally more complicated). In the following proof, to simplify notations, we write $D$ for the unit square in the first quadrant instead of $D_1$.
    \begin{proof}[Proof of Proposition \ref{Prop: Technical integral in polar coor}]
       Set $D_1$ to be the quarter of unit disk in the first quadrant, $D_2=D-D_1$. Then
       \begin{equation*}
          \begin{aligned}
          &\iint_{D_1} r^{s+a_1}(\ln r)^{a_2}\cdot(\cos\theta)^{s+b_1}(\ln \cos\theta)^{b_2}\cdot(\sin\theta)^{s+c_1}(\ln\cos\theta)^{c_2}f(\theta,s)drd\theta\\
          =&\int_0^1 r^{s+a_1}(\ln r)^{a_2}dr\cdot\int_{0}^{\frac{\pi}{2}}(\cos\theta)^{s+b_1}(\ln \cos\theta)^{b_2}\cdot(\sin\theta)^{s+c_1}(\ln\cos\theta)^{c_2}f(\theta,s)d\theta,
          \end{aligned}
       \end{equation*}
       hence by Lemma \ref{Lemma: Quasi-Beta Funciton}, the above integral has a meromorphic continuation. On the other hand,
       \begin{equation*}
          \begin{aligned}
          &\iint_{D_2}r^{s+a_1}(\ln r)^{a_2}\cdot(\cos\theta)^{s+b_1}(\ln \cos\theta)^{b_2}\cdot(\sin\theta)^{s+c_1}(\ln\cos\theta)^{c_2}f(\theta,s)drd\theta\\
          =&\int_0^{\frac{\pi}{4}}\Big(\int_1^{\frac{1}{\cos\theta}}r^{s+a_1}(\ln r)^{a_2}dr\Big)(\cos\theta)^{s+b_1}(\ln \cos\theta)^{b_2}\cdot(\sin\theta)^{s+c_1}(\ln\cos\theta)^{c_2}f(\theta,s)d\theta\\
          +&\int_{\frac{\pi}{4}}^{\frac{\pi}{2}}\Big(\int_1^{\frac{1}{\sin\theta}}r^{s+a_1}(\ln r)^{a_2}dr\Big)(\cos\theta)^{s+b_1}(\ln \cos\theta)^{b_2}\cdot(\sin\theta)^{s+c_1}(\ln\cos\theta)^{c_2}f(\theta,s)d\theta.
          \end{aligned}
       \end{equation*}
       Applying integration by parts to $dr$-integral successively, we can show that the $dr$-integral in the first term on the right hand side is a linear combination of $(\cos\theta)^{b_1'}(\ln\cos\theta)^{b_2'}$, with coefficients in the field of rational functions $\BC(s)$. So by Lemma \ref{Lemma: Quasi-Beta Funciton} and the remark below it, the first term on the right hand side has a meromorphic continuation. Similarly, the second term also has a meromorphic continuation. Hence we finish the proof of Proposition \ref{Prop: Technical integral in polar coor}.
    \end{proof}

\bibliographystyle{elsarticle-num}
    
\end{document}